\DeclarePairedDelimiter\ceil{\lceil}{\rceil}
\theoremstyle{remark}
\newenvironment{proof*}{\paragraph{Proof}}{\hfill$\blacksquare$}
\newenvironment{prooflemma}{\paragraph{\normalfont{\textit{Proof of Lemma}}}}{\hfill$\square$}
\newtheorem{remark}{Remark}
\newtheorem{theorem}{Theorem}[section]
\newtheorem{lemma}[theorem]{Lemma}
\newtheorem{proposition}[theorem]{Proposition}
\newtheorem{corollary}[theorem]{Corollary}
\newtheorem{definition}[theorem]{Definition}
\newcommand{\norm}[1]{\left\lVert#1\right\rVert}
\def\R{\mathbb{R}}
\def\N{\mathbb{N}}
\def\E{\mathbb{E}}
\def\Cov{\mathrm{Cov}}
\def\cF{\mathcal{F}}
\def\cI{\mathbbm{1}}
\newcommand{\Expect}[1]{\mathbb{E}\left[ #1 \right]}
\newcommand{\BootExpect}[1]{\mathbb{E}^*\left[ #1 \right]}
\newcommand{\DataExpect}[1]{\mathbb{E}_{M}\left[ #1 \right]}
\begin{document}

\begin{frontmatter}
\title{Bootstrapping Generalization Error Bounds for Time Series}
\runtitle{}

\begin{aug}
\author{\fnms{Robert} \snm{Lunde}\ead[label=e1]{rlunde@andrew.cmu.edu}}
\and
\author{\fnms{Cosma Rohilla} \snm{Shalizi}\ead[label=e2]{cshalizi@cmu.edu}}\thanksref{t1}

\thankstext{t1}{Supported by grants from the Institute for New Economic Thinking (IN01100005, INO1400020) and from the National Science Foundation (DMS1207759, DMS1418124).}


\affiliation{Carnegie Mellon University}

\address{Department of Statistics\\
Carnegie Mellon University\\
Pittsburgh, PA 15213 \\
USA \\
\printead{e1}\\
\phantom{E-mail:\ }\printead*{e2}}

\end{aug}

\begin{abstract}
We consider the problem of finding confidence intervals for the risk of
forecasting the future of a stationary, ergodic stochastic process, using a
model estimated from the past of the process.  We show that a bootstrap
procedure provides valid confidence intervals for the risk, when the data
source is sufficiently mixing, and the loss function and the estimator are
suitably smooth.  Autoregressive (AR($d$)) models estimated by least squares
obey the necessary regularity conditions, even when mis-specified, and
simulations show that the finite-sample coverage of our bounds quickly
converges to the theoretical, asymptotic level.  As an intermediate step, we
derive sufficient conditions for asymptotic independence between empirical
distribution functions formed by splitting a realization of a stochastic
process, of independent interest.
\end{abstract}

\begin{keyword}[class=MSC]
\kwd[Primary ]{60K35}
\kwd{60K35}
\kwd[; secondary ]{60K35}
\end{keyword}

\begin{keyword}
\kwd{dependent data}
\kwd{statistical learning}
\kwd{weak convergence}
\kwd{bootstrap}
\end{keyword}

\end{frontmatter}

\section{Introduction}
\label{Introduction}
Suppose we have observed data points $Y_1, \ldots, Y_t \equiv Y_{1:t}$ from a
stationary stochastic process $Y$, and want to predict $Y_{t+1}$ using a model
$\hat{f}_t$ estimated from the observations.  As in any other statistical
prediction problem, we want to be able to evaluate how well we will forecast,
and in particular make an estimate of the expected performance, or risk, of the
model, i.e., estimate the expected loss $L(Y_{t+1}, \hat{f}_t)$ for some
suitable loss function.  While several different notions of risk have been
proposed for time series \citep{Pestov-predictive-PAC,
  Kuznetsov-Mohri-non-stationary-mixing}, we follow
\citet{CRS-Leo-predictive-mixtures}, defining risk as the long-term average of
instantaneous expected losses, conditioned on the observations:
\begin{equation}
\label{ergodic risk}
R(\hat{f}_t) \equiv \lim_{m \rightarrow \infty} \frac{1}{m} \sum_{i=1}^m{\Expect{L(y_{t+i}, \hat{f}_t) | Y_{1:t}}}
\end{equation}
where it is understood that $\hat{f}_t$ is {\em estimated} using only
$Y_{1:t}$, but that when predicting $Y_{t+i}$, its {\em inputs} may come from
any time up to $t+i-1$.\footnote{Thus, for example, if we used an AR(1) model,
  with parameter $\theta$, the estimator $\hat{\theta}_t$ would be a function
  of $Y_{1:t}$ alone, but the prediction at time $t+i$ would be
  $\hat{\theta}_t(Y_{1:t})Y_{t+i-1}$.}  \citet{CRS-Leo-predictive-mixtures}
show that this is well-defined for ergodic sources.  Accepting this notion of
risk, how might we estimate it?  Suppose $\hat{f}_t$ takes as input some fixed
vector $Z_i = (Y_i, \ldots, Y_{i+d+1})$ and let $t_0 = t-d+1$.
Clearly, the in-sample performance of
$\hat{f}_t$ on the training data $Y_{1:t}$:
\begin{equation}
\hat{R}(\hat{f_t}) \equiv \frac{1}{t_0} \sum_{i=1}^{t_0}{L(z_i, \hat{f}_t)}
\end{equation}
will generally be overly optimistic an estimate of out-of-sample performance,
precisely because the model has been adjusted to fit that specific time series
(and not another).  We thus wish to know the ``generalization error'' of the
model $ \eta(\hat{f}_t) \equiv R(\hat{f_t}) - \hat{R}(\hat{f_t})$.  It is
particularly helpful to control the probability that $\eta(\hat{f}_t)$ exceeds
any given value, i.e., to probabilistically bound how much worse than its
in-sample performance $\hat{f}_t$ might really be.

In this paper, we give asymptotically-valid generalization error bounds for
time-series forecasting.  To do this, it is clearly sufficient to have a
consistent estimator for the $1 - \alpha$ quantile of the generalization error
$\eta_{1-\alpha}^*(\hat{f}_t)$, since that lets us give a probabilistic
guarantee for worst-case out-of-sample performance of our model:
\begin{align}
P(\hat{R}_t(\hat{f}_t) + \eta_{1-\alpha}^*(\hat{f}_t)) > 1 - \alpha
\end{align}
Following an idea proposed in \citet[sec.\ 8.3]{McDonald-thesis}, 
we achieve this by using the block bootstrap
\citep{Kunsch-bootstrap-for-stationary-obs} to directly simulate estimating a
model from a time series, and then evaluating the fitted model's ability to
forecast the continuation of the {\em same} realization of the process.  (See
Section \ref{sec:method}, Algorithm \ref{the-algorithm} for a precise statement
of the algorithm.)  Therefore, the main theoretical question that we consider
is bootstrap consistency of our estimator of the generalization error
$\eta^*(\hat{f}_t)$.  By this, we mean showing that the centered, normalized
bootstrap process,
\begin{equation}
\sqrt{t_0}\left[\eta^*(\hat{f}_t^*) -\BootExpect{\eta^*(\hat{f}_t)} \right] ~,
\end{equation}
converges in distribution, conditional on the data, to the same distribution as
the true sampling distribution,
\begin{equation}
\sqrt{t_0}\left[\widehat{\eta}(\hat{f}_t) -\Expect{\widehat{\eta}(\hat{f}_t)} \right] ~,
\end{equation}
where $\BootExpect{\cdot}$ denotes expectation with respect to the bootstrap
measure, and the precise definition of $\widehat{\eta}(\hat{f}_t)$ is postponed
to Section \ref{Main Theorem Statement}.

Our results focus on models $\hat{f}_t$ whose estimators are expressible as
plug-in estimators of some $d$-dimensional marginal distribution of the
stochastic process $Y$.  In this framework, we can also view the generalization
error itself as a functional of distribution functions, enabling the use of
weak convergence results from empirical process theory.  In contrast to the
constraints on the model and its estimator, essentially the only limit we put
on the data source $Y$ is that it has to be mixing (i.e., all correlations must
decay) at a polynomial rate.  In particular, we do {\em not} assume that the
model is in any way well-specified.  An important part of our proof is
establishing sufficient conditions for the asymptotic independence of empirical
processes formed by splitting one realization of a stochastic process.  This
was implications beyond the current problem, e.g., it allows us to show the
asymptotic Gaussianity and consistency of $k$-fold cross-validation for the
risk with dependent data (Appendix \ref{sec:k-fold-cv}).

We show (Section \ref{sec:ar_model_application}) that our conditions on the
model hold for linear autoregressive (AR($d$)) models estimated by least
squares --- again, whether or not the actual data-generating process is a
linear autoregression.  Simulation studies (Section \ref{sec:simulations})
indicate that convergence to the asymptotic coverage levels is quite rapid,
even when the model is highly mis-specified.

\subsection{Related work}
If the data source were not just stationary but independent and identically
distributed (IID), the definition of $R(\hat{f}_t)$ would reduce to the usual
$\Expect{L(Y_{t+1}, \hat{f}_t)}$, and we would be on familiar ground.  If we
want a point estimate of the risk, we might adjust the in-sample performance
through the analytical approximations of the various information criteria
\citep{Claeskens-Hjort-model-selection}, or we might turn to computational
procedures which attempt to directly simulate extrapolating to new observations
from the same distribution, such as cross-validation
\citep{Arlot-Celisse-on-cv} or the bootstrap
\citep{Efron-bootstrap-error-rate}\footnote{\citet{Bunke-Droge-bootstrap-vs-CV}
compares the properties of cross-validation and a bootstrapped generalization
error, but in a theoretical study involving IID linear regression with Gaussian
errors.  \citet{Shao-bootstrap-model-selection} studies the theoretical
properties of the bootstrapped generalization error for model selection
consistency in the linear regression, generalized linear regression, and
autoregressive model regimes.  He considers a residual resampling scheme and
shows that this bootstrap is not consistent for model selection, but an $m$ out
of $n$ variant of the bootstrap is consistent.}.  If we want an interval
estimate, there are again abundant analytical results from statistical learning
theory, based on notions of Vapnik-Chervonenkis dimension, Rademacher
complexity, etc.  \citep{Vidyasagar-on-learning-and-generalization,
Mohri-Rostamizadeh-Talwalkar}.  Many of these approaches, however, require at
the very least extensive re-thinking when applied to time series.

Thus, for example, while information criteria have been extensively studied for
selecting the order of time series models (mostly AR and ARMA models) since at
least the work of \citet{Shibata-order-selection, Hannan-order-estimation}, it
is well-known that their penalty terms they add to the in-sample risk are only
accurate under correct model specification
\citep{Claeskens-Hjort-model-selection}.  Similarly, while forms of
cross-validation for time series exist
\citep{Burman-Chow-Nolan-cross-validation-for-dependent,
Hardle-View-kernel-reg-for-time-series, Racine-feasible-cv,
Racine-consistent-cv-for-dependent-data}, we are unaware of work which studies
their properties under model mis-specification.  Both information criteria and
cross-validation moreover only provide point estimates of the risk,
rather than probabilistic bounds.

Extensive work has been done on extending the statistical-learning approach to
deriving such ``probably approximately correct'' bounds to time series
\citep{Meir-nonparametric-time-series, Karandikar-Vidyasagar-rates-of-UCEM,
Mohri-Rostamizdaeh-stability-bounds, Mohri-Rostamizadeh-rademacher-for-non-iid,
Kuznetsov-Mohri-non-stationary-mixing, risk-bounds-for-state-space-models}, but
this strand of work has, from our point of view, two drawbacks.  The first is
that the bounds derived hold uniformly over very wide ranges of processes.
This is obviously advantageous when one knows little about the data source, but
means that the bounds are often extremely conservative.  The second drawback is
that bounds often involve difficult-to-calculate quantities, especially the
beta-mixing coefficients which quantify the decay of correlations in the
stochastic process (see Definition \ref{defn:beta-mixing}).  These are
typically unknown, though not inestimable
\citep{estimating-beta-mixing-bernoulli}.  In contrast, our bounds, while
only asymptotically valid, are distribution-dependent and fully calculable,
though we do need to make a (weak) assumption about the mixing coefficients.

 \section{Proposed Estimator}
 In this section, we describe our algorithm in greater detail.  We begin by
discussing the resampling procedure, which will be used to generate both the
training and the test sets.  The circular block bootstrap variant that we will
use can be found in \citet{Lahiri-resampling-for-dependent}.

\subsection{The Circular Block Bootstrap (CBB)}

Suppose the functional of interest is a function of a $d$-dimensional
distribution function. For each observation $\{1, \ldots t \}$, generate a
“chunk” given by $ Z_i = (Y_i, \ldots, Y_{i+d -1})$. If an index $k$ exceeds
$t$, change the value of the index to $k \ \text{mod} \ t$.  By wrapping the
data around a circle, we ensure that each point is equally likely to be
resampled.\footnote{Had we ignored chunks with indices exceeding $t$, we would
  have had a moving-blocks bootstrap, which has the undesirable property that
  the bootstrap expected value no longer equals the sample mean.}

To capture the dependence structure in the data, we will need to resample
contiguous blocks of the chunks, given by
$\mathcal{B}_i =(Z_i, \ldots, Z_{i+\ell-1})$, where $\ell$ is the block-length.
Suppose the desired resample size is $N = t$.  Then we will resample
$ b = \ceil*{(N-d)/\ell}$ blocks, where the blocks are uniformly chosen from
$\{\mathcal{B}_1, \ldots, \mathcal{B}_t \}$.  Let $\{s(1),\ldots, s(b) \}$
denote the indices corresponding to the selected blocks.  The result of the
bootstrap procedure is the vector given by
$(\mathcal{B}_{s(1)}, \ldots,\mathcal{B}_{s(b)}) = (Z_{s(1)},Z_{s(1)+1},\ldots
, Z_{s(1)+ \ell - 1}, \ldots Z_{s(b) + \ell - 1})$. If the length exceeds
$N-d$, truncate entries at the end of the vector as needed. We can expand each
$Z_i$ to form the $ (N-d) \times d $ data matrix $\mathbf{X}^*$ given by:
\begin{equation}
  \mathbf{X}^* = \begin{bmatrix}
    Y_{s(1)} & \dots & Y_{s(1)+d-1} \\
    Y_{s(1)+1} & \dots & Y_{s(1)+d} \\
    \vdots & \ddots & \vdots \\
    Y_{s(b) + \ell - 1} & \dots & Y_{s(b) + \ell + d}
  \end{bmatrix}
\end{equation}
Note that it is also possible to perform a CBB procedure directly on
$Y_1, \ldots, Y_t$ to generate a bootstrapped series $Y_1^*, \ldots Y_N^*$, and
then construct the bootstrapped data matrix.  This approach, however, suffers
from the fact that some rows in bootstrapped data matrix contain observations
from two different blocks, leading to worse finite sample performance.

\subsection{Computing the Generalization Error Bound}

Our proposed procedure is described in Algorithm 1, which follows \citet[sec.\
8.3]{McDonald-thesis}.  Intuitively, we use the CBB to generate a training and
test set. We then compute an estimator of the generalization error, and take the
$(1-\alpha)$ quantile of this estimator across all bootstrap iterations and use
this to a construct confidence interval for the risk.

\alglanguage{pseudocode}
\begin{algorithm}[H]
\small
\caption{Bootstrap Bound on Prediction Error}
\begin{algorithmic}[1]
    \State Fit $\hat{f_t}$ on the time series $Y_1, \ldots Y_t$ and calculate the training error $\hat{R}(\hat{f_t})$
    \For {$i = 1 \to B$}
    \State Draw blocks with replacement to generate a training data matrix \\ \indent \indent $\mathbf{X}_{train}^* \in \R^{t \times d}$ and a test data matrix $\mathbf{X}_{test}^* \in \R^{t \times d}$
    \State Fit $\hat{f_t}$ on $\mathbf{X}_{train}^*$ and calculate the training error $\hat{R}(\hat{f_t})$.
    \State Calculate the test error $\widetilde{R}_t (\hat{f_t})$ on $\mathbf{X}_{test}^*$.
    \State Store the generalization term $\widehat{\eta}(\hat{f}) = \widetilde{R}_t(\hat{f_t}) - \hat{R}(\hat{f_t})$.
    \EndFor
    \State Find the $1 - \alpha$ percentile of $\widehat{\eta}(\hat{f_t})$. Add this to $\hat{R}(\hat{f_t})$.
\Statex
\end{algorithmic}
\label{the-algorithm}
\end{algorithm}

\subsection{Choosing the Block Length}

The theoretical results in the next section are compatible with a range of
block-length sequences, and so allow for data-driven block-length selection.
To our knowledge, there are no procedures in the literature that provide a
consistent estimate of the {\em optimal} block length for quantile estimation
of a general Hadamard-differentiable function.  (The methods of
\citet{Hall-Horowitz-Jing-bootstrap-block-size-rules} and
\citet{Lahiri-Furukawa-Lee-bootstrap-block-size-rule} only apply to functionals
in the ``smooth function of the mean'' framework.)  For convenience, we have
used the procedure of \citet{PolitisWhite2004}, which is optimized for the
sample mean but performs adequately in our simulation study\footnote{In
  additional simulations, not shown here, we used block-lengths of the form
  $C n^{1/3}$ for a large grid of values of $C$.  The procedure of
  \citet{PolitisWhite2004} was at least comparable, in terms of coverage, to
  the best-in-retrospect $C$.}.

\label{sec:method}

 \section{Main Results}
 \subsection{Preliminaries}
 In this section, we will prove the main theorem, which establishes sufficient
conditions for bootstrap consistency of an estimator of the generalization
error to hold.  Let
$\widetilde{R}_{N}(\hat{f}_t) \equiv \frac{1}{t_0} \sum_{i=t_0+1}^{N} L(y_i,
\hat{f}_t)$ denote the test error.  We will consider the case where $N=2t_0$. The sampling distribution
that we would like to approximate with our bootstrap estimator is that of
$\widehat{\eta}(\hat{f}_t) \equiv \widetilde{R}_{N}(\hat{f}_t) -
\widehat{R}_{t}(\hat{f}_t)$.


To derive limiting distributions, we will need assumptions about the dependence
structure.  We will assume that the process has a $\beta$-mixing coefficient
that decays at least at a cubic rate. While there are several equivalent
definitions of the $\beta$-mixing coefficient, below we state the version we
find to be most intuitive for strictly stationary processes.

\begin{definition}[$\beta$-mixing coefficient for stationary processes]
  \label{defn:beta-mixing}
  Let $Y$ be a stationary stochastic process and $(\Omega, \cF, P_\infty)$ be
  the probability space induced by the Kolmogorov extension. The coefficient of
  absolute regularity, or $\beta$-mixing coefficient $\beta_Y(k)$ is given by:
  \begin{align}
    \beta_Y(k) = \|P_{-\infty:0} \otimes P_{k:\infty}  - P_{-\infty:0} P_{k:\infty} \|_{TV}
  \end{align}
  where $\|\cdot \|_{TV}$ is the total variation norm,
  $P_{-\infty:0} \otimes P_{k:\infty}$ is the joint distribution of the blocks
  $\{Y_i\}_{i=-\infty}^0$ and $\{Y_i\}_{i=k}^\infty$ and
  $P_{-\infty:0} P_{k:\infty}$ is the product measure between the two blocks.
  We say that a process is $\beta$-mixing if $\beta_Y(k) \rightarrow 0$.
\end{definition}

From the definition, we can see that $\beta_Y \rightarrow 0$ implies asymptotic
independence as the gap between the past and future of the process grows.

We will also need to impose a differentiability condition on the estimated
parameters $\hat{\theta}(F)$, which take as input a $d$-dimensional
distribution function. Since the input is an infinite-dimensional object, we
will need a generalized notion of derivative.  The notion we use is tangential
Hadamard differentiability, which we define below:

\begin{definition}[Tangential Hadamard derivative]
  A map $\phi: \mathbb{D}_\phi \subset \mathbb{D} \mapsto \mathbb{E}$ is
  Hadamard differentiable at $\theta \in \mathbb{D}$, tangential to a set
  $\mathbb{D}_0 \subset \mathbb{D}$ if there exists a continuous linear map
  $\phi_\theta': \mathbb{D} \mapsto \mathbb{E}$ such that:
  \begin{align}
    \frac{\phi(\theta + t_n h_n) - \phi(\theta)}{t_n} \rightarrow \phi_\theta'
  \end{align}
  as $n \rightarrow \infty$ for all converging sequences $t_n \rightarrow 0$
  and $h_n \rightarrow h \in \mathbb{D}_0$.
\end{definition}

Finally, let $\ell^\infty(A,B)$ denote the space of bounded functions mapping
from $A$ to $B$.  If the second argument is omitted, take $B$ to be
$\mathbb{R}$. Now we are ready to state the main theorem.

\begin{theorem}
  Let $Y_1, \ldots, Y_t$ be observations from a strictly stationary
  $\beta$-mixing process $Y$ supported on $[-M,M]$ for some $M \in \R$.
  Suppose our loss function can be expressed as
  $L(z, \theta) \in \ell^\infty( \mathcal{Z} \times \Theta, \R)$,
  where $z \in \mathcal{Z} = [-M,M]^d$ and
  $\theta \in \Theta$, where $\Theta$ is a subset of a normed space.
  Assume that $X_{test}^*$ and $X_{train}^*$ are generated by the CBB procedure
  described in Section 2 with the block size $b(t) \rightarrow \infty$.
  Further, let $\hat{\theta}(\cdot): \mathcal{A} \subset \ell^\infty(\mathcal{Z}, \mathbb{R}) \mapsto \Theta$, where $F \in \mathcal{A}$ and $\theta_0 = \hat{\theta}(F)$.  Also assume the following conditions:

\begin{enumerate}
  \renewcommand{\labelenumi}{\theenumi}
  \renewcommand{\theenumi}{C\arabic{enumi}}
\item \label{mixing condition} $\sum_{k=1}^\infty (k+1)^2 \beta_Y(k) < \infty$
\item \label{block condition}
  $\lim \sup_{t \rightarrow \infty} t^{-\frac{1}{2}}b(t) < \infty$ 
\item \label{loss condition} $L(\cdot,\theta)$ is appropriately tangentially Hadamard differentiable with respect to
  $\theta$ at $\theta_0$ and $L(z,\theta_0)$ is differentiable with respect to $z$ with
  bounded mixed partial derivatives up to the $d$th order on $\mathcal{Z}$
\item \label{estimator condition}$\hat{\theta}(F)$ is appropriately
  tangentially Hadamard differentiable at $F$
\end{enumerate}
Then, the bootstrap is consistent for $\eta^*(\hat{f}_t^*)$.
\end{theorem}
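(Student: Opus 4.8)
The plan is to recognize the generalization error as a smooth functional of the empirical distribution functions of the two halves of the realization, and then reduce the claim to (i) a joint functional central limit theorem for those empirical distributions and (ii) its bootstrap analogue, linking the two through the functional delta method. First I would write, with $\widehat{F}_{\mathrm{tr}}$ and $\widehat{F}_{\mathrm{te}}$ the empirical distribution functions of the $d$-dimensional chunks $Z_i$ from the training window ($i = 1, \ldots, t_0$) and the test window ($i = t_0+1, \ldots, N$),
\begin{equation}
  \widehat{\eta}(\hat{f}_t) = \int L\big(z, \hat{\theta}(\widehat{F}_{\mathrm{tr}})\big)\, d\big(\widehat{F}_{\mathrm{te}} - \widehat{F}_{\mathrm{tr}}\big)(z) \equiv \Phi(\widehat{F}_{\mathrm{tr}}, \widehat{F}_{\mathrm{te}}).
\end{equation}
The map $\Phi$ is tangentially Hadamard differentiable by the chain rule: $\hat{\theta}$ is differentiable by \ref{estimator condition}, $L(\cdot,\theta)$ is differentiable in $\theta$ by \ref{loss condition}, and --- this is where the $z$-smoothness in \ref{loss condition} enters --- integrating the loss against a distribution function, $F \mapsto \int L(z,\theta)\, dF(z)$, can be recast by integration by parts as a bounded linear functional of $F \in \ell^\infty(\mathcal{Z})$, hence Hadamard differentiable. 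The ordinary and bootstrap functional delta methods then reduce the theorem to weak convergence of $\sqrt{t_0}(\widehat{F}_{\mathrm{tr}} - F,\, \widehat{F}_{\mathrm{te}} - F)$ and of its bootstrap counterpart to a common limit.

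Next I would establish the joint weak convergence. Each marginal, $\sqrt{t_0}(\widehat{F}_{\mathrm{tr}} - F)$ and $\sqrt{t_0}(\widehat{F}_{\mathrm{te}} - F)$, converges to a Gaussian process by an empirical-process central limit theorem for $\beta$-mixing sequences, where the cubic decay in \ref{mixing condition} supplies the summability needed both for the finite-dimensional central limit theorem and for asymptotic tightness over the (VC / bracketing) class of orthant indicators on $\mathcal{Z}$. The essential point is independence of the two limits: the training and test chunks occupy the disjoint windows $\{1, \ldots, t_0\}$ and $\{t_0 + 1, \ldots, N\}$, so I would bound the total-variation distance between the joint law of the two halves and the product of the marginals by the mixing coefficient across the gap, using Definition \ref{defn:beta-mixing}. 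Since this bound vanishes, the joint limit is a pair of independent Gaussian processes.

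Then I would prove the bootstrap analogue, conditionally on the data: that $\sqrt{t_0}(\widehat{F}^*_{\mathrm{tr}} - \BootExpect{\widehat{F}^*_{\mathrm{tr}}})$ and its test counterpart converge, in probability under the CBB measure, to the same independent Gaussian pair. Marginal bootstrap consistency of the empirical process for mixing data is available from block-bootstrap functional central limit theorems, with \ref{block condition} controlling the number of resampled blocks and \ref{mixing condition} the mixing; the conditional asymptotic independence of the bootstrapped training and test processes would follow because their cross-covariance is a sum of products of block statistics that decouple under the same mixing control. Combining the two weak-convergence statements through the delta method and its bootstrap version yields that $\sqrt{t_0}(\widehat{\eta} - \Expect{\widehat{\eta}})$ and the centered bootstrap process share the Gaussian limit $\Phi'_{(F,F)}$ evaluated at the independent pair, which is exactly bootstrap consistency.

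I expect the main obstacle to be the asymptotic-independence arguments --- both for the original split and, more delicately, for the bootstrap, where the training and test matrices are drawn from the \emph{same} circularly wrapped series. Controlling the conditional cross-covariance of the two bootstrap empirical processes, while simultaneously matching it to the (zero) cross-covariance of the original split, is the technical heart of the argument; the smoothness of $L$ in $z$ (\ref{loss condition}) is what keeps $\Phi$ within the scope of the delta method, and the cubic mixing rate (\ref{mixing condition}) is what makes all the relevant covariance sums converge.
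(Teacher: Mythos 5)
Your overall architecture matches the paper's: write the generalization error as a functional $\phi_{test}-\phi_{train}$ of the two empirical distribution functions, prove joint weak convergence of the bivariate empirical process and of its bootstrap analogue to a common limit consisting of two independent Gaussian processes, and transfer through the functional delta method for the bootstrap, with Hadamard differentiability assembled by the chain rule from \ref{loss condition}, \ref{estimator condition}, and a (multivariate) integration lemma. So the skeleton is sound and is essentially the one the paper uses.

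The genuine gap is in your asymptotic-independence step for the original split. You propose to bound the total-variation distance between the joint law of the two halves and the product of their marginals ``by the mixing coefficient across the gap,'' and to conclude because that bound vanishes. But the training window $\{1,\ldots,t_0\}$ and the test window $\{t_0+1,\ldots,N\}$ are adjacent: the gap is fixed (of order $d$), not growing with $t$, so the relevant coefficient is $\beta_Y(O(1))$, which does not tend to zero. The two halves are \emph{not} asymptotically independent as random elements; only the normalized empirical processes are. The paper's argument works at that level: by Davydov's inequality,
\begin{equation*}
\left|\Cov\left(\sqrt{t_0}\,P_t\,g_1,\ \sqrt{t_0}\,P_N\,g_2\right)\right| \ \lesssim\ \frac{1}{t_0}\sum_{i\le t_0<j}\beta(|i-j|)^r \ \lesssim\ \frac{1}{t_0}\sum_{k\ge 1} k\,\beta(k)^r \ \rightarrow\ 0
\end{equation*}
under \ref{mixing condition}, the point being that the pairs straddling the boundary at lag $k$ number at most $k$, so the dependent boundary region contributes a vanishing fraction after the $1/t_0$ normalization. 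Your step could be repaired by trimming a buffer of width $m_t\to\infty$, $m_t=o(\sqrt{t})$, around $t_0$ and then coupling across the now-growing gap, but as written it fails. Two smaller points: finite-dimensional convergence of the concatenated pair is a CLT for a \emph{nonstationary} triangular array (the summand changes form at $i=t_0$), for which the paper invokes a triangular-array CLT of Ekstr\"om after Politis--Romano--Wolf; your Cram\'er--Wold reduction needs such a result too. Conversely, the bootstrap cross-term you flag as the technical heart is in fact the easy part: the training and test resamples are drawn independently given the data, so their conditional independence is exact and only the two marginal block-bootstrap functional CLTs (Radulovi\'c) are required.
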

 \label{Main Theorem Statement}
 \subsection{Proof of Main Theorem}
 \label{Proof of Main Theorem}
 We will show bootstrap consistency using the functional delta method for the
bootstrap.  We will define additional notions needed to state this theorem
below. The definitions we adopt here can be found in \citet{Kosorok-intro}.

Bootstrap consistency is known to follow from a certain conditional weak
convergence, which we will define below.  For ease of exposition, we do not address measurability issues in our definitions; again, the reader is referred to \citet{van-der-Vaart-Wellner-weak-conv} or \citet{Kosorok-intro} for details.

\begin{definition}
  We say that $X_n$ converges weakly in probability conditional on the data, or
  $X_n \overset{P}{\underset{W} \rightsquigarrow} X$, if
  $\sup_{h \in BL_1} |\DataExpect{h(X_n)} - \Expect{h(X)}|
  \xrightarrow{P} 0$
where $BL_1$ is the space of bounded functions with Lipschitz
 norm $\leq 1$,   and $\DataExpect{\cdot}$
  is conditional expectation over the weights $M$ given the data.
\end{definition}

\begin{proposition}[Functional Delta Method for the Bootstrap, \citet{Kosorok-intro}, Theorem 12.1]
  \label{FDM}
  For normed spaces $\mathbb{D}$ and $\mathbb{E}$ , let
  $\phi: \mathbb{D}_\phi \subset \mathbb{D} \rightarrow \mathbb{E}$ be a
  Hadamard differentiable map at $\mu$, tangential to
  $\mathbb{D}_0 \subset \mathbb{D}$, with derivative $\phi_\mu'$.  Let
  $\mathbb{X}_n$ and $\mathbb{X}_n^*$ have values in $\mathbb{D}_\phi$, with
  $r_n (\mathbb{X}_n - \mu) \rightsquigarrow \mathbb{X}$, where $\mathbb{X}$ is
  tight and takes values in $\mathbb{D}_0$ for some sequence of constants
  $0 < r_n \rightarrow \infty$, the maps $W_n \rightarrow h(\mathbb{X}_n)$ are
  measurable for every $h \in C_b(\mathbb{D})$ almost surely, and where
  $r_n c (\mathbb{X}_n^* - \mathbb{X}_n) \overset{P}{\underset{W}
    \rightsquigarrow} \mathbb{X}$ for a constant $0 < c < \infty$.  Then
  $r_n c (\phi(\mathbb{X}_n^*) - \phi(\mathbb{X}_n)) \overset{P}{\underset{W}
    \rightsquigarrow} \phi_{\mu}'(\mathbb{X})$.
\end{proposition}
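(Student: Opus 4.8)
The plan is to reduce the statement to a deterministic ``delta method'' lemma about Hadamard differentiable maps and then transport it through the conditional weak convergence using an almost sure representation. Write $t_n = (r_n c)^{-1} \to 0$, $A_n = r_n(\mathbb{X}_n - \mu)$, and $Z_n = r_n c(\mathbb{X}_n^* - \mathbb{X}_n)$, so that $\mathbb{X}_n = \mu + t_n c A_n$ and $\mathbb{X}_n^* = \mathbb{X}_n + t_n Z_n$. With this notation the object of interest is $r_n c(\phi(\mathbb{X}_n^*) - \phi(\mathbb{X}_n)) = g_n(Z_n)$, where
\[
  g_n(z) \;=\; \frac{\phi(\mathbb{X}_n + t_n z) - \phi(\mathbb{X}_n)}{t_n}.
\]
First I would split $g_n$ into its linear part and a remainder, $g_n(z) = \phi_\mu'(z) + R_n(z)$. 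Since $\phi_\mu'$ is continuous and linear and $Z_n \overset{P}{\underset{W}\rightsquigarrow} \mathbb{X}$ by hypothesis, the continuous mapping theorem for conditional weak convergence gives $\phi_\mu'(Z_n) \overset{P}{\underset{W}\rightsquigarrow} \phi_\mu'(\mathbb{X})$. Hence it suffices to show that the remainder $R_n(Z_n)$ is conditionally asymptotically negligible, i.e. that it contributes nothing to the bounded-Lipschitz distance $\sup_{h \in BL_1}|\DataExpect{h(g_n(Z_n))} - \Expect{h(\phi_\mu'(\mathbb{X}))}|$; a conditional Slutsky argument then yields $g_n(Z_n) \overset{P}{\underset{W}\rightsquigarrow} \phi_\mu'(\mathbb{X})$, which is the claim.

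Controlling the remainder is the crux, and I expect it to be the main obstacle, for two intertwined reasons. First, Hadamard differentiability only guarantees $R_n(z_n) \to 0$ along fixed sequences $z_n \to z \in \mathbb{D}_0$; it is not uniform over bounded sets, so no crude supremum bound on $R_n$ is available. Second, the centering $\mathbb{X}_n = \mu + t_n c A_n$ drifts by a random amount, and $A_n$ converges only in distribution (to $\mathbb{X}$), not to a deterministic point, so the base point of the perturbation cannot simply be frozen. The deterministic ingredient that makes everything work is a ``with drift'' version of the definition: if $a_n \to a \in \mathbb{D}_0$ and $z_n \to z \in \mathbb{D}_0$, then, because $\mathbb{D}_0$ is a linear subspace and $\phi_\mu'$ is linear,
\[
  \frac{\phi(\mu + t_n(c a_n + z_n)) - \phi(\mu + t_n c a_n)}{t_n} \;\longrightarrow\; \phi_\mu'(c a + z) - \phi_\mu'(c a) \;=\; \phi_\mu'(z),
\]
which shows $g_n(z_n) \to \phi_\mu'(z)$ provided the drift directions $c a_n = c A_n$ converge in $\mathbb{D}_0$.

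To supply that convergence I would invoke an almost sure representation. Since $r_n \to \infty$, the hypothesis $A_n \rightsquigarrow \mathbb{X}$ forces $\mathbb{X}_n - \mu = t_n c A_n \to \mu$, so the drift is asymptotically small; moreover $\mathbb{X}$ is tight and supported on $\mathbb{D}_0$. Using the characterization of convergence in probability by subsequences, I would pass to an arbitrary subsequence and extract a further subsequence along which, on a common enriched probability space, a Skorokhod representation of $A_n$ converges a.s. to a limit $\tilde{A} \in \mathbb{D}_0$ and --- via the almost sure representation theorem for conditional weak convergence of \citet{Kosorok-intro}, whose applicability is guaranteed by the stated measurability of $W_n \mapsto h(\mathbb{X}_n)$ --- a version $\tilde{Z}_n$ of $Z_n$ converges a.s. to a variable $\tilde{\mathbb{X}}$ distributed as $\mathbb{X}$. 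Applying the with-drift lemma pointwise along this representation gives $g_n(\tilde{Z}_n) \to \phi_\mu'(\tilde{\mathbb{X}})$ a.s., and bounded convergence then transfers this a.s. convergence to convergence of the conditional bounded-Lipschitz functionals, yielding $g_n(Z_n) \overset{P}{\underset{W}\rightsquigarrow} \phi_\mu'(\mathbb{X})$ along the subsequence. Since the subsequence was arbitrary, the convergence holds in probability, completing the proof. The delicate bookkeeping is ensuring that the unconditional representation of the drift $A_n$ and the conditional representation of the bootstrap process $Z_n$ coexist on one space with their correct (joint, data-times-weights) laws; this is where the measurability hypothesis is essential.
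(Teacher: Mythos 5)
You should first note that the paper itself gives no proof of this proposition: it is imported verbatim from \citet{Kosorok-intro} (Theorem 12.1, essentially Theorem 3.9.11 of \citet{van-der-Vaart-Wellner-weak-conv}), so the relevant benchmark is the source's proof. Your deterministic reduction matches that proof exactly: setting $t_n = (r_n c)^{-1}$, writing the target as the difference quotient $g_n(Z_n)$ around the random base point $\mathbb{X}_n = \mu + t_n c A_n$, and isolating the ``with drift'' consequence of Hadamard differentiability, $t_n^{-1}[\phi(\mu + t_n(ca_n + z_n)) - \phi(\mu + t_n c a_n)] \to \phi_\mu'(z)$, which is valid as you say because $\mathbb{D}_0$ is a linear space on which $\phi_\mu'$ is linear and both perturbed points lie in $\mathbb{D}_\phi$. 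The conditional continuous mapping step for the continuous linear map $\phi_\mu'$, giving $\phi_\mu'(Z_n) \overset{P}{\underset{W}\rightsquigarrow} \phi_\mu'(\mathbb{X})$, is also correct.

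The gap is precisely where you flag it, and it is not repairable by citation: there is no ``almost sure representation theorem for conditional weak convergence'' in \citet{Kosorok-intro} that simultaneously (i) preserves the data/weights product structure, so that $\DataExpect{\cdot}$ computed on the enriched space has the same law as on the original one, (ii) makes the merely weakly convergent drift $A_n$ converge almost surely, and (iii) makes $Z_n$ converge almost surely to a copy of $\mathbb{X}$ that is independent of the data $\sigma$-field --- and all three are needed for your pointwise application of the drift lemma followed by conditional bounded convergence. Representation arguments of this flavor are used for the \emph{outer almost sure} version of the theorem (Theorem 3.9.13 in \citet{van-der-Vaart-Wellner-weak-conv}), under correspondingly stronger hypotheses. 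The in-probability version is proved without any representation: one first shows joint \emph{unconditional} convergence $(A_n, Z_n) \rightsquigarrow (\mathbb{X}, \tilde{\mathbb{X}})$ with $\tilde{\mathbb{X}}$ an independent copy of $\mathbb{X}$ (this is where the measurability hypothesis on $W_n \mapsto h(\mathbb{X}_n)$ enters); the extended continuous mapping theorem applied with your drift lemma then yields $R_n(Z_n) = g_n(Z_n) - \phi_\mu'(Z_n) \to 0$ in unconditional outer probability; and the conditional statement follows from the bound $\sup_{h \in BL_1} \left| \DataExpect{h(g_n(Z_n))} - \DataExpect{h(\phi_\mu'(Z_n))} \right| \leq \DataExpect{2 \wedge \norm{R_n(Z_n)}}$, whose unconditional expectation tends to zero by Fubini, so the left side tends to zero in probability by Markov's inequality. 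Substituting this Fubini--Markov device for your subsequence/Skorokhod step closes the gap; as written, the crux of your argument rests on a representation result that does not exist in the form you need.
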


As discussed in \citet{Lahiri-resampling-for-dependent}, we will construct
chunks of size $d$ such that $Z_i = (Y_i, \ldots, Y_{i+d+1})$ and consider
functionals of the distribution function of $Z$. Let $t_0=t-d+1$.
In addition, let $P_{t}= \frac{1}{t_0} \sum_{i=1}^{t_0} \delta_{Z_i}$ denote
the empirical measure for the training data and
$P_{N} = \frac{1}{t_0} \sum_{i= t_0 +1}^{N} \delta_{Z_i}$ denote the
empirical measure for the test data, where $N = 2t_0$. Similarly, let $P_t^*$ and $P_N^*$ denote
the bootstrap measure for the training set and the test set, respectively,
where each measure is conditioned on $t$ points.

For $z \in \mathcal{Z}$, let the $\leq$ operator be defined element-wise: that
is, $a \leq b$ iff $a_i \leq b_i$ for all
$i \in \{1, \ldots, d\}$.  If we let $\cF$ be the function class given by:
\begin{align}
\label{function class}
\cF = \{f_s: s \in \mathcal{Z}, f_s(z) = \cI(z \leq s ) \}
\end{align}
It follows that the empirical distribution function for the training set and
test set can be expressed point-wise as $F_t(s) = P_t f_s$ and
$F_N(s) =P_N f_s$, respectively.  Analogous expressions hold for
bootstrap measures.  To represent the entire distribution function, we can view
$F_t(\cdot)$ and $F_N(\cdot)$ as elements of
$\ell^\infty(\Omega \times \cF, \R)$ and $F_t^*(\cdot)$ and $F_N^*(\cdot)$ as
elements of $\ell^\infty(\Omega \times \bar{\Omega} \times \cF, \R)$, where $\bar{\Omega}$ is the probability space associated with the bootstrap weights.  For a
fixed sample path, we may view these mappings as belonging to
$\ell^\infty(\cF, \R)$.

Our first step is to translate $\hat{R}(\hat{f_t})$ and
$\widetilde{R}_{N}(\hat{f_t})$ into plug-in estimators. This is done in the
following lemma:
\begin{lemma}
  \label{plugin lemma}
  Let $A$, $B \in \ell^\infty(\cF, \R) $ and define the mappings:
  \begin{eqnarray}
    \label{train functional}
    \phi_{train}(A,B) &= & \int L(z ,\hat{\theta}(A)) \ dA \\
    \label{test functional}
    \phi_{test}(A,B) &= & \int L(z ,\hat{\theta}(A)) \ dB \\
    \label{risk functional}
    \phi_{risk,F}(A) &= & \int L(z ,\hat{\theta}(A)) \ dF
  \end{eqnarray}
  Then, $\hat{R}(\hat{f}_t)$ and $\widetilde{R}_{N,}(\hat{f}_t)$ are plug-in
  estimators for $\phi_{train}(\cdot,\cdot)$ and $\phi_{test}(\cdot,\cdot)$,
  respectively. Furthermore, suppose that $Y$ is a stationary $\beta$-mixing
  process satisfying $\sum_{k=1}^\infty \beta_Y(k) < \infty$ and $\hat{f}_t$ is
  a function of $\theta(F_t)$ and $z_t = (Y_{t}, \ldots Y_{t-d})$ such that
  $L(Y_{t+1}, \hat{f}_t) = L(z, \theta(F_t))$.  Then for each
  $t \in \N$:
  \begin{align}
    \label{stationary risk}
    \lim_{m \rightarrow \infty} \frac{1}{m} \sum_{i=1}^m \Expect{L(Y_{t+i}, \hat{f}_t) | Y_1, \ldots , Y_t} = \int{L(z ,\hat{\theta}(F_t)) dF} = \phi_{risk,F}(F_t)
  \end{align}

  \begin{prooflemma}
    The fact that the training and test errors are plug-in estimators of
    functionals given in (\ref{train functional}) and (\ref{test functional})
    follows immediately by inspection.  To see (\ref{stationary risk}),
    consider the measurable space $(\Omega^{t+d}, \mathcal{A}^{t+d})$, where
    superscripts denote product spaces, and let $\{\mu_m\}_{m \in \mathbb{N}}$
    be a sequence of probability measures corresponding to the distribution of
    $(Y_1, \ldots, Y_t, Y_{t+m}, Y_{t+m+d-1})$.  By the definition of
    $\beta_Y(\cdot)$, it follows that:
    \begin{align}
      \|\mu_m - \mu_\infty \|_{TV} \leq \beta_Y(m)
    \end{align}
    where $\mu_\infty$ is the product measure between $(Y_1, \ldots Y_t)$ and
    $(Y_{t+1}, \ldots Y_{t+d-1})$.  Now we will use a result from
    \citet{Schervish-Seidenfeld-approach-to-consensus}, stated in a less general form for current purposes:
    \begin{lemma}[\citet{Schervish-Seidenfeld-approach-to-consensus}]
      Let $Q$ and $R$ be probability distributions on
      $(\Omega^j, \mathcal{A}^j)$ where $j$ is allowed to be $\infty$.  Define
      the space of histories $(H_t, \mathcal{H}_t)$, where $H_t = \Omega^t$ and
      $\mathcal{H}_t = \mathcal{A}^t$ and let $h_t \in H_t$ be a
      history. Suppose $Q$ and $R$ permit regular conditional probabilities,
      denoted $Q(\cdot|h_t)$ and $R(\cdot|h_t)$, respectively. Then,
      \begin{align}
        \|Q - R \|_{TV} < ab \implies Q( \|Q(\cdot| h_t) - R(\cdot|h_t) \|_{TV} > a) < b
      \end{align}
    \end{lemma}
    Applying this lemma with the assumed mixing conditions yields:
    \begin{align}
      \sum_{m=1}^\infty P_\infty \left(\|\mu_m(\cdot| h_t) - \mu_\infty(\cdot|h_t) \|_{TV} > a_m \right) <\infty
    \end{align}
    where $P_\infty$ is the distribution of the process $Y$ and
    $a_m \rightarrow 0$ \footnote{The fact that such an $a_m$ exists can be
      shown using the Open Mapping Theorem in functional analysis.  See for
      example, \citet[Problem 25.23]{Driver-analysis-tools}.}, which implies
    that $\|(\mu_m(\cdot | h_t) - \mu_\infty(\cdot|h_t)\|_{TV} \rightarrow 0$,
    $P_\infty-a.s.$ by Borel-Cantelli Lemma. Now, we will use the fact that for
    all bounded $f$,
    $\|(\mu_m(\cdot | h_t) - \mu_\infty(\cdot | h_t)\|_{TV}\rightarrow 0$
    implies that:
    \begin{align}
      \int{f d\mu_m(\cdot | h_t)} \rightarrow \int{f d\mu_\infty(\cdot | h_t)}
    \end{align}
    Applying this result to $L(z,\theta)$,which we assumed to be bounded, we
    conclude that
    $\lim_{m \rightarrow \infty}{
      \Expect{L(Z_{t+m},\hat{\theta}(F_t)|Y_1,\ldots Y_t)}} =
    \int{L(z,\theta(F_t) dF}$ $P_\infty-a.s.$, which implies that the Cesaro
    mean also converges to the desired limit.
  \end{prooflemma}
\end{lemma}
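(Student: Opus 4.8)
The plan is to treat the two assertions separately, since the plug-in claim is essentially definitional while the Cesàro limit carries all the analytic weight. For the plug-in part I would simply unwind the empirical measures: with $P_t=\frac{1}{t_0}\sum_{i=1}^{t_0}\delta_{Z_i}$ and $P_N=\frac{1}{t_0}\sum_{i=t_0+1}^{N}\delta_{Z_i}$, integrating $L(z,\hat\theta(A))$ against $dA=dF_t$ reproduces $\frac{1}{t_0}\sum_{i=1}^{t_0}L(z_i,\hat\theta(F_t))$, which equals $\hat R(\hat f_t)$ once we use the hypothesis that $\hat f_t$ is a function of $\theta(F_t)$; the test functional integrates the same loss (still evaluated at the \emph{training} parameter $\hat\theta(A)=\hat\theta(F_t)$) against $dB=dF_N$, recovering $\widetilde R_N(\hat f_t)$. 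This identifies $\hat R$ and $\widetilde R_N$ with $\phi_{train}$ and $\phi_{test}$ evaluated at $(F_t,F_N)$.

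For the limit I would reduce the averaged claim to a per-term claim plus a standard Cesàro argument: if $a_i:=\Expect{L(Y_{t+i},\hat f_t)\mid Y_{1:t}}\to a_\infty$ as $i\to\infty$, then $\frac1m\sum_{i=1}^m a_i\to a_\infty$. So the crux is to show that, conditionally on the observed history, the instantaneous expected loss at horizon $i$ converges to $\int L(z,\hat\theta(F_t))\,dF$. Since $\hat f_t$ — equivalently $\hat\theta(F_t)$ — is measurable with respect to $Y_{1:t}$, it is \emph{frozen} under the conditioning, so the only thing that moves is the conditional law of the future chunk $Z_{t+i}=(Y_{t+i},\ldots,Y_{t+i+d-1})$ given $Y_{1:t}$; I want this conditional law to approach the stationary $d$-dimensional marginal $F$ as the gap $i$ grows.

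The main obstacle is precisely upgrading $\beta$-mixing of the \emph{joint} law into \emph{almost-sure} convergence of the \emph{conditional} law given the specific path $h_t=Y_{1:t}$. Mixing directly controls only the total-variation distance between the joint distribution $\mu_m$ of $\bigl((Y_1,\ldots,Y_t),(Y_{t+m},\ldots,Y_{t+m+d-1})\bigr)$ and the product measure $\mu_\infty$ that makes the future block independent of the past and stationary, giving $\|\mu_m-\mu_\infty\|_{TV}\le\beta_Y(m)$; this does not by itself say the conditionals $\mu_m(\cdot\mid h_t)$ converge for a.e.\ fixed $h_t$. To bridge the gap I would invoke the transfer inequality of Schervish and Seidenfeld, $\|Q-R\|_{TV}<ab\Rightarrow Q(\|Q(\cdot\mid h_t)-R(\cdot\mid h_t)\|_{TV}>a)<b$, applied to $Q=\mu_m$, $R=\mu_\infty$. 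Choosing a sequence $a_m\to0$ slowly enough that $b_m:=\beta_Y(m)/a_m$ stays summable — feasible under the hypothesis $\sum_m\beta_Y(m)<\infty$ — makes $\sum_m P_\infty(\|\mu_m(\cdot\mid h_t)-\mu_\infty(\cdot\mid h_t)\|_{TV}>a_m)<\infty$, so Borel--Cantelli yields $\|\mu_m(\cdot\mid h_t)-\mu_\infty(\cdot\mid h_t)\|_{TV}\to0$ for $P_\infty$-a.e.\ $h_t$.

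Finally I would cash this in. Total-variation convergence of conditional laws gives $\int f\,d\mu_m(\cdot\mid h_t)\to\int f\,d\mu_\infty(\cdot\mid h_t)$ uniformly over bounded $f$ with $\|f\|_\infty\le1$, and applying it to the bounded loss $L(\cdot,\hat\theta(F_t))$ — bounded since $\mathcal{Z}$ is compact and $L\in\ell^\infty(\mathcal{Z}\times\Theta,\R)$ — yields $a_i=\Expect{L(Y_{t+i},\hat f_t)\mid Y_{1:t}}\to\int L(z,\hat\theta(F_t))\,dF$ almost surely, because under $\mu_\infty(\cdot\mid h_t)$ the future chunk carries the stationary marginal $F$ and is independent of $h_t$. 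The Cesàro step from the second paragraph then delivers the claimed limit for each fixed $t$, completing the proof.
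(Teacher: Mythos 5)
Your proposal is correct and follows essentially the same route as the paper: bound $\|\mu_m-\mu_\infty\|_{TV}$ by $\beta_Y(m)$, transfer to the conditional laws via the Schervish--Seidenfeld inequality with a sequence $a_m\to 0$ chosen so that $\beta_Y(m)/a_m$ is summable, apply Borel--Cantelli to get almost-sure total-variation convergence of $\mu_m(\cdot\mid h_t)$ to $\mu_\infty(\cdot\mid h_t)$, and conclude by boundedness of the loss plus a Ces\`aro argument. The only cosmetic difference is that you justify the existence of such an $a_m$ directly from $\sum_m \beta_Y(m)<\infty$, where the paper cites an Open Mapping Theorem argument, and you make explicit the ``by inspection'' plug-in identification; both are harmless.
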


\begin{remark}
  Going back to at least \citet{Akaike-predictor-identification}, a proposal
  for risk in the time series setting is $\int L(z,\theta(F_t) dF$, where $Z$
  and $Y_1, \ldots Y_t$ are observations from different runs of the process
  (and are therefore independent).  While easier to work with than many
  proposals for the conditional risk, it suffers from the fact that we are
  often interested in how our model will perform on future data from the same
  realization of the process.  Our result here provides sufficient conditions
  for which this convenient notion of risk is equivalent to that of
  \citet{CRS-Leo-predictive-mixtures}.
\end{remark}

\begin{remark}
  Summability of $\beta$-mixing coefficients ensures convergence of the shifted
  risk to the desired limit, which is stronger than what would be minimally
  needed to show Cesaro convergence.  One could make a similar argument based
  on a uniform bound on the decay of correlations in Cesaro-mean, but
  $\beta$-mixing is used in the main theorem, so we will not introduce
  nonstandard notions of dependence here.
\end{remark}

Now define
$\mathbb{H}_{t}, \mathbb{H}^*_{t} \in \ell^\infty(\cF, \R^2)$,
which represent centered bivariate empirical processes related to the empirical
distribution functions and bootstrapped empirical distribution function,
respectively:
\begin{align}
\label{H process}
\mathbb{H}_{t}(f_s) &=
\begin{pmatrix}
 \mathbb{G}_t(f_s) \\
 \mathbb{G}_{N}(f_s)
 \end{pmatrix}
 =
\sqrt{t_0} \begin{bmatrix}
    P_t \ f_s - P \ f_s  \ \\
  P_{N} \ f_s - P \ f_s
 \end{bmatrix}
%
 \end{align}

 \begin{align}
 \label{bootstrap H process}
 \mathbb{H}^*_{t}(f_s) &=
\begin{pmatrix}
 \mathbb{G}_t^* (f_s) \\
 \mathbb{G}_{N}^* (f_s)
 \end{pmatrix}
=
 \sqrt{t_0}
\begin{bmatrix}
P_t^* \ f_s - P_t \ f_s  \\
 P_N^* \ f_s - P_t \ f_s
 \end{bmatrix}
\end{align}
To apply the functional delta method, we need to show that $\mathbb{H}_{t}$ and  $\mathbb{H}_{t}^*$ converge to the same Gaussian Process, up to a multiplicative constant on the covariance function.  We will derive the limiting distribution of $\mathbb{H}_t$ in the lemma below.
\begin{lemma}
\label{joint convergence lemma}
Let $Y$ be a $\beta$-mixing process with mixing rate satisfying \ref{mixing condition} and consider the statistic $\mathbb{H}_t$ defined in (\ref{H process}). Then,
\begin{align}
\label{limit process}
\mathbb{H}_{t} \rightsquigarrow \mathbb{H} \equiv  \mathbb{G} \times \mathbb{G}
\end{align}
where $\mathbb{H}$ is a bivariate Gaussian process, with the $\times$ symbol denoting independence. Furthermore, is a mean zero Gaussian Processes with covariance structure given by:
\begin{align}
\label{covariance function}
\Gamma(f,g) = \lim_{k \rightarrow \infty} \sum_{i=1}^\infty \text{Cov} \ (f(Z_k), g(Z_i)) \  \forall f,g \in \cF
\end{align}
\end{lemma}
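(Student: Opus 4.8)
The plan is to prove the joint weak convergence in $\ell^\infty(\cF,\R^2)$ by the standard two-step route: (i) convergence of all finite-dimensional marginals of $\mathbb{H}_t=(\mathbb{G}_t,\mathbb{G}_N)$ to a jointly Gaussian vector, and (ii) asymptotic tightness of the pair. Throughout I would work with the chunk process $\{Z_i\}$, which inherits absolute regularity from $Y$ with $\beta_Z(k)\le\beta_Y(k-d+1)$, so \ref{mixing condition} still yields $\sum_k (k+1)^2\beta_Z(k)<\infty$. Because the $Y_i$ lie in $[-M,M]$, the orthant-indicator class $\cF$ in (\ref{function class}) is uniformly bounded and of Vapnik--Chervonenkis type (hence Donsker-like), which is what will drive tightness.

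First I would establish the marginal functional CLT. Each coordinate is the centered, $\sqrt{t_0}$-scaled empirical process of a stationary $\beta$-mixing sequence indexed by a bounded VC class; by the empirical-process central limit theorem for absolutely regular sequences (as developed by Arcones and Yu, and compatible with the weak-convergence machinery of \citet{Kosorok-intro}), the rate in \ref{mixing condition} is more than enough to give $\mathbb{G}_t\rightsquigarrow\mathbb{G}$, a tight mean-zero Gaussian process whose covariance is the long-run covariance $\sum_{k\in\Z}\Cov(f(Z_0),g(Z_k))$, which matches (\ref{covariance function}) after reindexing by stationarity. Stationarity gives the identical limit for $\mathbb{G}_N$, and tightness of the pair is then immediate, since a finite collection of asymptotically tight processes is asymptotically tight.

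The heart of the lemma is the asymptotic independence embedded in step (i). For fixed $f_1,\dots,f_p,g_1,\dots,g_q\in\cF$, any linear combination of the coordinates of $(\mathbb{G}_t(f_j),\mathbb{G}_N(g_l))$ can be written as a single triangular-array sum $t_0^{-1/2}\sum_{i=1}^{2t_0}h_{t,i}(Z_i)$ of the mixing sequence, with $h_{t,i}$ built from the $f_j$ on the first half and the $g_l$ on the second; a CLT for bounded functionals of $\beta$-mixing arrays (via Bernstein blocking or Berbee coupling) yields a Gaussian limit. Its covariance splits into a within-first-half block, a within-second-half block, and the cross term $\Cov(\mathbb{G}_t(f),\mathbb{G}_N(g))=t_0^{-1}\sum_{i=1}^{t_0}\sum_{j=t_0+1}^{2t_0}\Cov(f(Z_i),g(Z_j))$. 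Grouping pairs by the lag $m=j-i$, the number of pairs at lag $m$ is at most $m$, so this cross term is bounded by $t_0^{-1}\sum_{m\ge 1}m\,|\Cov(f(Z_0),g(Z_m))|$; the covariance inequality for $\{0,1\}$-valued functions of absolutely regular sequences gives $|\Cov(f(Z_0),g(Z_m))|\le 2\beta_Z(m)$, and since \ref{mixing condition} forces $\sum_m m\,\beta_Z(m)<\infty$, the numerator is summable and the cross term is $O(t_0^{-1})\to 0$. Hence the limiting covariance is block-diagonal, the two Gaussian limits are uncorrelated, and joint Gaussianity upgrades this to independence, giving $\mathbb{H}=\mathbb{G}\times\mathbb{G}$.

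I expect the main obstacle to be the joint finite-dimensional CLT together with the bookkeeping that forces the cross-covariance to vanish: one must apply a mixing CLT to a genuinely nonstationary array whose summands switch type at the midpoint, and control the boundary region where the two adjacent halves can still be correlated. The lag-counting bound is precisely where the extra moments in \ref{mixing condition} (beyond plain summability of $\beta_Y$) are consumed, so the delicate point is verifying that $\sum_m m\,\beta_Y(m)<\infty$ is exactly what is needed and that the blocking step silently requires nothing stronger. Everything else, namely the VC/Donsker property of $\cF$ and the marginal tightness, is standard.
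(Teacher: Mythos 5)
Your proposal is correct and follows essentially the same route as the paper: marginal tightness via the Arcones--Yu/Kosorok empirical CLT for VC classes under $\beta$-mixing, finite-dimensional joint convergence via Cram\'er--Wold and a CLT for a nonstationary mixing triangular array (the paper invokes Ekstr\"om's theorem where you propose generic blocking/coupling), and vanishing of the cross-covariance by a lag-counting bound of the form $t_0^{-1}\sum_m m\,\beta(m)^r\to 0$ (the paper uses Davydov's inequality with $r=1-\epsilon$ where you use the direct covariance bound for bounded functions), with uncorrelatedness upgraded to independence by joint Gaussianity. These are only implementation-level differences, so no further comparison is needed.
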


\begin{prooflemma}
  We will prove joint convergence using the following result from the
  literature:

\begin{proposition}[\citet{van-der-Vaart-Wellner-weak-conv}, 1.5.3]
  Let $X_\alpha: \Omega_\alpha \mapsto \ell^\infty(U)$ and
  $Y_\alpha: \Omega_\alpha \mapsto \ell^\infty(V)$ be asymptotically tight
  nets such that
  \begin{align}
    \left( X_\alpha(u_1), \ldots X_\alpha(u_k), Y_\alpha(v_1) , \ldots , Y_\alpha(v_l)  \right)
    \rightsquigarrow \left( X(u_1), \ldots X(u_k), Y(v_1) , \ldots , Y(v_l)  \right)
  \end{align}
  for stochastic processes $X$ and $Y$. Then there exists versions of $X$ and
  $Y$ with bounded sample paths and
  $(X_\alpha, Y_\alpha) \rightsquigarrow (X,Y)$ in
  $\ell^\infty(U) \times \ell^\infty(V)$.
\end{proposition}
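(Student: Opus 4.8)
The plan is to recognize that the product space $\ell^\infty(U) \times \ell^\infty(V)$, equipped with the max-norm, is isometrically identified with $\ell^\infty(U \sqcup V)$, the space of bounded functions on the disjoint union of the two index sets: a pair $(f,g)$ corresponds to the function that restricts to $f$ on the copy of $U$ and to $g$ on the copy of $V$. Under this identification the joint net $Z_\alpha \equiv (X_\alpha, Y_\alpha)$ becomes a single net of maps into $\ell^\infty(U \sqcup V)$, and the desired conclusion becomes ordinary weak convergence $Z_\alpha \rightsquigarrow Z$ with $Z = (X,Y)$ in a single $\ell^\infty$ space. This reduction lets me invoke the standard characterization of weak convergence in $\ell^\infty$ spaces: a net converges weakly to a tight limit if and only if it is asymptotically tight and its finite-dimensional marginals converge weakly to the marginals of the limit.

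With this reduction in hand I would verify the two hypotheses of the characterization theorem for $Z_\alpha$. For the marginal convergence, observe that a finite subset of $U \sqcup V$ is exactly a finite collection $u_1, \ldots, u_k \in U$ together with $v_1, \ldots, v_l \in V$, so the corresponding marginal of $Z_\alpha$ is precisely the joint vector $\left( X_\alpha(u_1), \ldots, X_\alpha(u_k), Y_\alpha(v_1), \ldots, Y_\alpha(v_l) \right)$, whose weak convergence to the corresponding marginal of $(X,Y)$ is exactly the stated hypothesis. This step is a bookkeeping translation carrying no real content.

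The substantive step is deducing asymptotic tightness of the joint net $Z_\alpha$ from asymptotic tightness of $X_\alpha$ and $Y_\alpha$ separately, i.e.\ showing that a finite product of asymptotically tight nets is asymptotically tight. Given $\epsilon > 0$, I would choose compacts $K_1 \subset \ell^\infty(U)$ and $K_2 \subset \ell^\infty(V)$ that asymptotically capture $X_\alpha$ and $Y_\alpha$ up to outer probability $\epsilon/2$, together with all their $\delta$-enlargements. The product $K_1 \times K_2$ is compact in $\ell^\infty(U \sqcup V)$, and since every enlargement $(K_1 \times K_2)^\delta$ contains $K_1^\delta \times K_2^\delta$, subadditivity of outer probability gives $P_*\left( Z_\alpha \notin (K_1 \times K_2)^\delta \right) \le P^*\left( X_\alpha \notin K_1^\delta \right) + P^*\left( Y_\alpha \notin K_2^\delta \right)$. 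Taking $\liminf$ over $\alpha$ and then letting $\delta \downarrow 0$ yields asymptotic tightness of $Z_\alpha$ with uniform bound $\epsilon$.

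Combining asymptotic tightness with the marginal convergence, the characterization theorem delivers $Z_\alpha \rightsquigarrow Z$ to a tight Borel limit $Z$ in $\ell^\infty(U \sqcup V)$ whose marginals agree with those of $(X,Y)$; translating back through the identification gives $(X_\alpha, Y_\alpha) \rightsquigarrow (X, Y)$ in $\ell^\infty(U) \times \ell^\infty(V)$. Finally, the claim about versions with bounded sample paths is a consequence of tightness of the limit: a tight random element of $\ell^\infty$ concentrates on a separable subset and admits a version whose paths are bounded and uniformly continuous with respect to a semimetric rendering the index set totally bounded, so restricting to the two copies of $U$ and $V$ produces the asserted versions of $X$ and $Y$. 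I expect the only genuine obstacle to be the careful handling of outer probabilities in the product-tightness argument, together with the attendant measurability caveats, since in the non-separable $\ell^\infty$ setting both tightness and the union bound must be phrased in terms of outer measure rather than ordinary probability; everything else is the standard Prohorov-type machinery encapsulated in the cited characterization theorem.
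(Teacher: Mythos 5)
Your proposal is correct, and it is essentially the canonical argument: the paper does not prove this proposition but imports it verbatim from van der Vaart and Wellner, whose own proof proceeds exactly as you do --- identifying $\ell^\infty(U) \times \ell^\infty(V)$ isometrically with $\ell^\infty(U \sqcup V)$ under the max norm, deducing asymptotic tightness of the pair from marginal asymptotic tightness via the product-compact and outer-probability union bound (their Lemma 1.4.3; note that with the max norm one even has $(K_1 \times K_2)^\delta = K_1^\delta \times K_2^\delta$, so your containment is an equality), and then invoking the characterization of weak convergence in $\ell^\infty$ as asymptotic tightness plus finite-dimensional convergence (their Theorem 1.5.4), with the bounded-sample-path versions supplied by tightness of the limit. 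Nothing further is needed.
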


That is, to show weak convergence of a vector-valued stochastic process, it
suffices to show (a) marginal asymptotic tightness and (b) finite-dimensional
convergence of arbitrary joint distributions.  We will set $U = V =\cF$, where
$\cF$ is the function class corresponding to indicator functions defined in
(\ref{function class}).  To show (a), it is sufficient to prove that there
exists a semimetric $\rho$ for which $U$ is totally bounded and
\begin{align}
\label{stochastic equicontinuity}
\lim_{\delta \downarrow 0} \lim_{t \rightarrow \infty} P^{out} \left\{\sup_{u,v \in U \text{with} \rho(u,v) <\delta} |X_t(u) - X_t(v)| > \epsilon \right\} = 0 \ \forall \epsilon > 0
\end{align}
Generally, proving this condition is nontrivial.  However, it is known to hold
for the process $\{\sqrt{t} (1/t \sum_{i=1}^t f(X_i) - \Expect{f(X)}\}_\cF$
under some additional conditions:

\begin{proposition}[\citet{Kosorok-intro}, Theorem 11.25, after \citet{Arcones-Yu-CLT}, Theorem 2.1]
  \label{CLT VC}
  Let $Y$ be a stationary sequence in a Polish space with marginal distribution
  P, and let $\cF$ be a class of functions in $L_2(P)$.  Let
  $\mathbb{G}_n(f) = P_nf - Pf$. Suppose there exists a $2 < p < \infty$ such
  that:
  \begin{enumerate}
  \item[(a)]
    $\lim_{k \rightarrow \infty} k^{2/(p-2)}(\log k)^{2(p-1)/(p-2)}\beta_Y(k)
    =0 $
  \item[(b)] $\cF$ is permissible, VC and has envelope F satisfying
    $P^{out}F^p < \infty$.
  \end{enumerate}
  Then $\mathbb{G}_n \rightsquigarrow \mathbb{G}$ in $\ell^\infty(\cF)$, where
  $\mathbb{G}$ is a mean 0 Gaussian Process with covariance structure given in
  (\ref{covariance function}).
\end{proposition}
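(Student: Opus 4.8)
The plan is to prove a functional central limit theorem (Donsker-type theorem) for the empirical process $\mathbb{G}_n$ indexed by the VC class $\cF$ under $\beta$-mixing, via the standard two-part criterion for weak convergence in $\ell^\infty(\cF)$: (i) convergence of all finite-dimensional marginals of $\mathbb{G}_n$ to the claimed Gaussian law, and (ii) asymptotic equicontinuity of $\{\mathbb{G}_n\}$ with respect to a totally bounded semimetric. Granting (i) and (ii), the characterization of weak convergence in $\ell^\infty(\cF)$ (van der Vaart--Wellner) delivers $\mathbb{G}_n \rightsquigarrow \mathbb{G}$, with $\mathbb{G}$ tight, mean zero, and Gaussian.

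For (i), I would fix $f_1, \dots, f_m \in \cF$ and apply a central limit theorem for stationary $\beta$-mixing (hence $\alpha$-mixing) sequences to the centered, $\R^m$-valued variables $(f_j(Y_i) - P f_j)_{j \le m}$ through the Cram\'er--Wold device. The CLT itself is obtained by Bernstein's big-block/small-block decomposition: the small blocks are asymptotically negligible in $L^2$, while the big blocks, separated by gaps, are approximately independent with error governed by the mixing coefficient, so that a Lindeberg argument applies after truncating at a level permitted by the $L^p$ envelope. A covariance inequality relating $\beta_Y(k)$ to the $L^p$ norm of the envelope (Rio/Ibragimov-type) controls the autocovariances and identifies the limiting covariance as (\ref{covariance function}), so that $\Gamma(f,g)$ is well defined.

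The heart of the proof, and the main obstacle, is the asymptotic equicontinuity (\ref{stochastic equicontinuity}) with respect to the $L_2(P)$-semimetric $\rho(f,g) = (P(f-g)^2)^{1/2}$; that $(\cF,\rho)$ is totally bounded follows from the VC property together with the finite envelope, since the uniform covering numbers obey a polynomial bound and the attendant entropy integral $\int_0^\delta \sqrt{\log N(\epsilon,\cF,L_2(P))}\, d\epsilon$ converges. To accommodate the dependence I would invoke Berbee's coupling lemma: alternate big blocks of length $a_n$ with small blocks (gaps) of length $b_n$, and couple the big blocks to an independent sequence of blocks having the same marginal law, at a total-variation cost that sums to a multiple of the number of blocks times $\beta_Y(b_n)$. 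Discarding the small blocks costs only a moment term, so the modulus of continuity of $\mathbb{G}_n$ is reduced, up to negligible remainders, to that of a row-independent triangular array of block sums. To the latter the classical VC machinery applies: symmetrization, followed by a chaining/maximal inequality run against the uniform entropy integral, together with a Bernstein-type exponential bound for the independent block sums.

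The delicate step --- and the reason hypothesis (a) carries the precise factor $k^{2/(p-2)}(\log k)^{2(p-1)/(p-2)}$ --- is the joint calibration of the block lengths $a_n, b_n$ and of a truncation level for the merely $p$-integrable envelope. The blocks must be long enough that the accumulated coupling error vanishes, yet short enough that the chaining bound over the blocked array (inflated by the block length and by the logarithmic entropy factor coming from the VC covering numbers) still tends to zero as the continuity radius $\delta \downarrow 0$; simultaneously the truncation must not consume more than the available $L^p$ moment. Balancing the coupling error, the truncation error, and the entropy integral produces exactly the stated mixing rate. With equicontinuity and finite-dimensional convergence in hand, the two-part criterion yields weak convergence of $\mathbb{G}_n$ to the tight mean-zero Gaussian process $\mathbb{G}$ with covariance $\Gamma$, completing the proof.
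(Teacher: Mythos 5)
Your outline is, in substance, a reconstruction of the Arcones--Yu proof itself, and as a sketch of that argument it is sound: Berbee coupling with a blocking scheme, truncation at a level set by the $L^p$ envelope, symmetrization plus chaining against the polynomial VC uniform covering numbers to get asymptotic equicontinuity, and a blocked CLT with Cram\'er--Wold for the finite-dimensional laws is exactly the strategy of \citet{Arcones-Yu-CLT}. The comparison point you should be aware of, however, is that the paper does not prove this proposition at all: it is imported verbatim (as Theorem 11.25 of \citet{Kosorok-intro}), and the paper's only original content attached to it is the modification in Appendix \ref{Arcones Yu Modification}. Because the result is applied there to the empirical process built from the \emph{second half} of the sample, $\{\sqrt{t_0}\,(t_0^{-1}\sum_{i=t_0+1}^{N} f(X_i) - Pf)\}_{f \in \cF}$, the equicontinuity condition (\ref{stochastic equicontinuity}) must be checked for a shifted index set; strict stationarity makes the relevant tail probability identical to that of the process started at $i=1$, so the bound in equation (2.14) of Arcones--Yu's Lemma 2.1 carries over unchanged. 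Your proposal proves the unshifted statement, which is what was asked, so this is not a fault --- but it is the entirety of what the paper actually supplies.

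Two technical glosses in your sketch are worth flagging, though neither undermines the outline. First, identifying the limiting covariance (\ref{covariance function}) is more delicate than an appeal to ``a Davydov/Rio covariance inequality'': with only $P^{out}F^p < \infty$ and the stated mixing rate, Davydov's bound with exponent $r = (p-2)/p$ gives covariance terms of order $\beta_Y(k)^{(p-2)/p} \approx k^{-2/p}$, which is \emph{not} absolutely summable for $p>2$; Arcones--Yu obtain existence of the limiting variance through the blocking computation rather than termwise summability. (In this paper's application $\cF$ consists of uniformly bounded indicators, so one may take $r$ arbitrarily close to $1$ and recover summability directly --- this is precisely the device used in the paper's covariance-splitting lemma.) Second, the standard implementation in Arcones--Yu couples alternating equal-length blocks (odd/even) rather than your Bernstein-style big-block/small-block scheme with discarded gaps; the latter can be made to work, but the calibration yielding the exact rate in hypothesis (a) tracks the equal-block construction.
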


We can apply the result to each process marginally to show that (a) holds. See
Appendix \ref{Arcones Yu Modification} for additional details.

To show (b), we will use the Cramer-Wold device. We will need a central limit
theorem for nonstationary triangular arrays satisfying some mixing condition to
show univariate convergence.  To this end, we will state a modification of a theorem of
\citet{Ekstrom-CLT-for-strong-mixing}\footnote{This paper uses the notion of
weakly-approaching random variables
\citep{Belyaev-Sjostedt-de-Luna-weakly-approach}, which does not require the
existence of a limiting distribution; this is more general than what we need
here.}, itself a modification of
\citet{Politis-Romano-Wolf-subsampling}, which requires a slightly different notion of mixing:

\begin{definition}[$\alpha$-mixing coefficient]
\begin{align}
\alpha_X (k) = \sup_{l \geq 1} \{ |P(AB) - P(A) P(B)| :A \in \sigma_{-\infty}^{l}, B \in \sigma_{l+k}^\infty \}
\end{align}
\end{definition}

It is a well-known result in mixing theory that $\alpha_X \leq \beta_X$
\citep[Eq.\ 1.1]{Bradley-strong-mixing}.

\begin{proposition}[\citet{Ekstrom-CLT-for-strong-mixing}, Theorem 1]
  \label{nonstationary CLT}
  Let $\{ X_{n,i}, 1 \leq i \leq d_n \}$ be a triangular array of random
  variables, with $\alpha_n(\cdot)$ being the $\alpha$-mixing coefficients for
  the $n^{\mathrm{th}}$ row, and
  $\bar{X}_{n} \equiv d_n^{-1} \sum_{i=1}^{d_n}{X_{n,i}}$ being the sample
  mean of that row.
  Assume the following. For some $\delta > 0$,
  \begin{enumerate}
    \renewcommand{\labelenumi}{\theenumi}
    \renewcommand{\theenumi}{B\arabic{enumi}}
  \item \label{nonstationary CLT moments condition}
    $\Expect{|X_{n,i} - \Expect{X_{n,i}}|^{2+\delta}} <c $ for some $c > 0$ and all $n,i$
  \item \label{nonstationary CLT mixing condition}
    $\sum_{k=0}^\infty {(k+1)^2 \alpha_n^{\frac{\delta}{4+\delta}}(k)} < c$ for
    all $n$
  \end{enumerate}
  Then, if
  $\sigma^2 = \lim_{n \rightarrow \infty}
  \text{Var}(\sqrt{d_n}\bar{X}_{n})$ exists,
  \begin{equation}
    d_n^{1/2} (\bar{X}_{n} - \Expect{\bar{X}_{n}}) \rightsquigarrow N(0, \sigma^2)
  \end{equation}
\end{proposition}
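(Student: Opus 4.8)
The plan is to prove this by the classical Bernstein big-block/small-block method, reducing the dependent triangular-array sum to a sum of asymptotically independent blocks to which a Lindeberg--Feller CLT applies. Throughout I fix the row index $n$ and, since condition \ref{nonstationary CLT moments condition} is stated for the centered variables, I assume without loss of generality that $\E[X_{n,i}]=0$ and write $S_n = \sum_{i=1}^{d_n} X_{n,i}$, so that the target is $d_n^{-1/2} S_n \rightsquigarrow N(0,\sigma^2)$. I would choose integer block lengths $p_n$ (big) and $q_n$ (small) satisfying $p_n \to \infty$, $p_n/d_n \to 0$, $q_n/p_n \to 0$, and $r_n \alpha_n(q_n) \to 0$, where $r_n \approx d_n/(p_n+q_n)$ is the number of blocks, and partition $\{1,\dots,d_n\}$ into alternating big and small blocks, writing $U_j$ for the $j$th big-block sum and $V_j$ for the $j$th small-block sum (plus a negligible terminal remainder).

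First I would show that the small blocks are asymptotically negligible in $L^2$. Davydov's covariance inequality together with \ref{nonstationary CLT moments condition} gives $|\Cov(X_{n,i},X_{n,j})| \le C\, \alpha_n(|i-j|)^{\delta/(2+\delta)}$ uniformly in $n$, and the summability built into \ref{nonstationary CLT mixing condition} (whose weight $(k+1)^2$ is stronger than needed here) makes these covariances absolutely summable. Consequently $\Var\!\left(d_n^{-1/2}\sum_j V_j\right) \le C\, r_n q_n / d_n \to 0$, so the small-block contribution vanishes in probability and may be discarded; the same estimate controls the terminal remainder.

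Next I would establish that the big-block sums behave like independent summands. By the Volkonskii--Rozanov lemma, the joint characteristic function of $(U_1,\dots,U_{r_n})$ differs from the product of the marginals by at most $16(r_n-1)\alpha_n(q_n)$, which tends to zero by the choice of $q_n$; hence $d_n^{-1/2}\sum_j U_j$ has the same limit as $d_n^{-1/2}\sum_j \tilde U_j$ for independent $\tilde U_j$ with the same marginal laws. To this independent array I would apply the Lindeberg--Feller CLT. The variances sum to $\sigma^2$ because the small blocks and remainder are negligible and $\Var(d_n^{-1/2}S_n)\to\sigma^2$ by hypothesis, while the Lindeberg condition follows from a Lyapunov bound: a Rosenthal-type moment inequality for mixing sums yields $\E|U_j|^{2+\delta} \le C\, p_n^{(2+\delta)/2}$ uniformly in $j,n$, so $\sum_j \E\,|U_j/\sqrt{d_n}|^{2+\delta} \le C\,(p_n/d_n)^{\delta/2} \to 0$. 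Slutsky's theorem then combines the three pieces to give $d_n^{-1/2}S_n \rightsquigarrow N(0,\sigma^2)$.

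The main obstacle is the uniform-in-$n$ Rosenthal-type bound $\E|U_j|^{2+\delta}\le C p_n^{(2+\delta)/2}$ for block sums of a \emph{nonstationary} mixing array: the usual moment inequalities for stationary sequences do not apply verbatim, and the constant $C$ must be shown to depend on the row only through the quantity bounded in \ref{nonstationary CLT mixing condition}. This is precisely where the polynomially weighted condition with exponent $\delta/(4+\delta)$ is used, the extra decay beyond Davydov's exponent $\delta/(2+\delta)$ being what feeds the higher-moment estimate. A secondary technical point is extracting a usable decay rate for $\alpha_n(\cdot)$ from the mere uniform summability in \ref{nonstationary CLT mixing condition}: summability of $(k+1)^2\alpha_n^{\delta/(4+\delta)}(k)$ uniformly in $n$ forces $\alpha_n(k)$ to decay faster than a fixed polynomial uniformly, which is enough to choose $q_n,p_n$ meeting all four constraints simultaneously.
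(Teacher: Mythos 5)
You should first note that the paper itself does not prove this proposition: it is imported verbatim as Theorem 1 of Ekstr\"om (itself a modification of a result in Politis, Romano and Wolf), and the appendix of ``proof modifications'' contains no entry for it. So there is no internal proof to compare against; the relevant benchmark is the blocking proof in the cited sources, and your reconstruction follows exactly that route --- Bernstein big-block/small-block decomposition, Davydov's inequality to make the small blocks negligible in $L^2$, the Volkonskii--Rozanov characteristic-function lemma to decouple the big blocks, and a Lyapunov condition for the decoupled array. Your observation that \ref{nonstationary CLT mixing condition} forces a uniform polynomial decay, $\alpha_n(k) \le C (k+1)^{-2(4+\delta)/\delta}$ term by term, is also the right way to obtain admissible block sequences $p_n, q_n$ from a condition stated only as uniform summability.

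There is one step that does not go through as written: the Lyapunov bound at exponent $2+\delta$. Rosenthal/Yokoyama-type moment inequalities for mixing sums control $\E|U_j|^{r}$ only when the summands have moments of order \emph{strictly greater} than $r$; under \ref{nonstationary CLT moments condition} alone, which bounds moments of order exactly $2+\delta$, the claimed estimate $\E|U_j|^{2+\delta} \le C p_n^{(2+\delta)/2}$ is not available. The repair is cheap and should be made explicit: run Lyapunov at exponent $2+\tau$ with $\tau < \delta$, using the full $2+\delta$ moments as the surplus, so the moment inequality's mixing condition becomes $\sum_k (k+1)^{\tau/2} \alpha_n(k)^{(\delta-\tau)/(2+\delta)} < \infty$ uniformly in $n$; by the polynomial decay extracted from \ref{nonstationary CLT mixing condition} this holds for all sufficiently small $\tau$, since the requirement at $\tau = 0$ is $2(4+\delta)/\delta > (2+\delta)/\delta$, which is automatic. (One must also cite a nonstationary version of the moment inequality, with constants depending only on the uniform moment bound and mixing rate --- such versions exist, e.g.\ in Doukhan's monograph on mixing --- or argue via truncation at level $\epsilon\sqrt{d_n}$ and verify Lindeberg directly, which is closer to what the cited proofs do.) With that adjustment your outline is a correct proof; note finally that in this paper's application the summands are bounded linear combinations of indicators, so all moments exist and the exponent bookkeeping that forms your ``main obstacle'' trivializes there.
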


Let $\{u_1, \ldots, u_k$, $v_1, \ldots, v_l\}$ be a set of arbitrary elements
of $\mathcal{Z}$ for any $k,l \in \mathbb{N}$ and let
$\{\lambda_1, \ldots, \lambda_k,\gamma_1, \ldots, \gamma_l\}$ be arbitrary
elements of $\R$.  After applying Cramer-Wold, we are left with the following
sample mean:
\begin{equation}
S_{d_t} \equiv \frac{1}{d_t} \sum_{i =1}^{d_t}{g_i(Z_i)} = \frac{1}{t_0} \sum_{i=1}^{t_0}{\sum_{j=1}^{k}{\lambda_j \cI(Z_i \leq u_j)}} + \frac{1}{t_0} \sum_{i=t_0+1}^{N}{\sum_{j=1}^{l}{\gamma_j \cI(Z_i \leq v_j)}}
\end{equation}
We will view $\{g_i(Z_i)\}_{i=1}^{d_t}$ as elements of the triangular array.
\ref{nonstationary CLT moments condition} follows for our class $\cF$. Although
the process is now nonstationary, it still satisfies \ref{nonstationary CLT
  mixing condition} since the mixing coefficient of $\{g_i(Z_i)\}_{i=1}^{d_n}$
is bounded by $\alpha_Y(\cdot)$. Since we can take $\delta$ to be arbitrarily
large and $\alpha_Y(\cdot) \leq \beta_Y(\cdot)$, (b) follows.

Now we will need to check that the limiting variance matches that of limiting
Gaussian process defined in (\ref{covariance function}).  Simple calculation
reveals that:
\begin{align*}
\text{Var}(\sqrt{d_t} S_{d_t}) &= \frac{1}{d_t} \left[\sum_{i=1}^{t_0} \sum_{j=1}^{t_0} \text{Cov}(g_i(Z_i),g_j(Z_j)) + \sum_{i=t_0+1}^{N}\sum_{j=t_0+1}^{N} \text{Cov}(g_i(Z_i),g_j(Z_j)) \right. \\
 &+ \left.  \sum_{i=1}^{t_0} \sum_{j=t_0+1}^{N} \text{Cov}(g_i(Z_i),g_j(Z_j)) +  \sum_{i=t_0+1}^{N} \sum_{j=1}^{t_0} \text{Cov}(g_i(Z_i),g_j(Z_j)) \right]
\end{align*}
The desired result will follow if we can show that the last two terms on the RHS
goes to 0.  We find conditions for this to be true below.

\begin{lemma}
  Suppose $Y$ is a stationary stochastic process that is polynomially
  $\beta$-mixing and let $\mathcal{G}$ be the linear hull of $\cF$. That is,
  $\mathcal{G}$ is the function class corresponding to finite linear
  combinations of $\cF$, given by:
  \begin{align}
    \mathcal{G} = \left \{ g = \sum_{i=1}^m a_i f_i \ , \ a_i \in \R, f_i \in \cF, m \in \mathbb{N} \right\}
  \end{align}
  where $\cF$ is the function class given in (\ref{function class}).
  Then, for any
  $g_1, g_2 \in \mathcal{G}$:
  \begin{align}
    \lim_{t \rightarrow \infty} \Cov(\sqrt{t_0} P_t \ g_1, \sqrt{t_0} P_{N} \ g_2) = 0
  \end{align}
\end{lemma}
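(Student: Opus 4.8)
The plan is to reduce the claim to a bound on a double sum of covariances and then exploit the mixing decay together with the separation between the training and test index blocks. First I would expand, using bilinearity of covariance and the fact that $g_1,g_2$ are bounded (being finite linear combinations of indicators in $\cF$),
\begin{align*}
\Cov(\sqrt{t_0}\,P_t g_1,\sqrt{t_0}\,P_N g_2)
= \frac{1}{t_0}\sum_{i=1}^{t_0}\sum_{j=t_0+1}^{N}\Cov\big(g_1(Z_i),g_2(Z_j)\big),
\end{align*}
recalling $N=2t_0$. Since each $Z_i$ is a measurable function of $(Y_i,\dots,Y_{i+d-1})$, for $j>i$ the two chunks are separated by $j-i-d+1$ time steps, so the standard covariance inequality for mixing sequences (using $\alpha_Y\le\beta_Y$ and boundedness of $g_1,g_2$) gives $|\Cov(g_1(Z_i),g_2(Z_j))|\le C\,\beta_Y\big((j-i-d+1)_+\big)$ for a constant $C$ depending only on $\|g_1\|_\infty,\|g_2\|_\infty$; when the chunks overlap or are adjacent ($j-i<d$) the argument is $0$ and the term is bounded by the constant $C$.

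Next I would reorganize the double sum by the gap $k=j-i$. For $i\in\{1,\dots,t_0\}$ and $j\in\{t_0+1,\dots,2t_0\}$ the gap ranges over $k\in\{1,\dots,2t_0-1\}$, and the number of index pairs realizing a given gap $k$ is exactly $\min(k,2t_0-k)$. This combinatorial count is the crux: it encodes the fact that small gaps can occur only for pairs near the boundary $t_0$, so only $O(k)$ pairs have gap $k$, rather than the $O(t_0)$ pairs one would see within a single block. I would then split the sum at $k=t_0$. For the small-gap part $k\le t_0$, bounding the multiplicity by $k$ gives a contribution at most $\frac{C}{t_0}\sum_{k=1}^{t_0}k\,\beta_Y((k-d)_+)$; since $\sum_k k\,\beta_Y(k)<\infty$ (a consequence of \ref{mixing condition}), the numerator is bounded uniformly in $t_0$, so this term is $O(1/t_0)\to 0$. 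For the large-gap part $k>t_0$, monotonicity of $\beta_Y$ gives $\beta_Y((k-d)_+)\le\beta_Y(t_0-d)$ and the multiplicity is at most $t_0$, yielding a contribution of order $t_0\,\beta_Y(t_0-d)$; because \ref{mixing condition} forces $k^2\beta_Y(k)\to 0$, we have $\beta_Y(t_0-d)=o((t_0-d)^{-2})$ and hence $t_0\,\beta_Y(t_0-d)=o(1/t_0)\to 0$.

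The main obstacle is that the $\sqrt{t_0}$ normalization on each factor leaves only a single $1/t_0$ in front of the double sum, so the naive bound — estimating $\sum_j\beta_Y(\cdot)\le\sum_k\beta_Y(k)=O(1)$ for each $i$ and then summing over the $t_0$ values of $i$ — yields only $O(1)$, not $o(1)$. Overcoming this requires precisely the multiplicity count above, which shows that the mixing mass concentrated at small gaps is spread over only $O(k)$ rather than $O(t_0)$ pairs; this is what upgrades the bound from $O(1)$ to $o(1)$ and is where the separation of the training and test windows is genuinely used. I would also note that the result extends from $\cF$ to its linear hull $\mathcal{G}$ automatically, since the covariance is bilinear and each $g_i$ is a finite combination, so the bound for general $g_1,g_2\in\mathcal{G}$ follows by the triangle inequality from the bound for indicator functions.
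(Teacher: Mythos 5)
Your proof is correct and follows essentially the same route as the paper: expand the covariance into a double sum over the two index blocks, bound each term by a covariance--mixing inequality (the paper invokes Davydov's inequality with exponent $r=1-\epsilon$, you use the bounded-variable version with $\alpha_Y\le\beta_Y$), observe that only $O(k)$ pairs realize gap $k$, and conclude from $\frac{1}{t_0}\sum_k k\,\beta_Y(k)^r\to 0$ under the summability hypothesis. Your treatment is in fact slightly more careful than the paper's on two minor points --- the $d$-step overlap of the chunks $Z_i$ and the exact multiplicity $\min(k,2t_0-k)$ --- but the split at $k=t_0$ is not needed, since $\min(k,2t_0-k)\le k$ already gives the $O(1/t_0)$ bound in one step.
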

\begin{proof}
  First, recall \citet{Davydov-inequality}'s inequality, which states that:
  \begin{align}
    |\Cov(X,Y)| \leq 10 \beta(\sigma(X), \sigma(Y))^r \|X\|_p \|Y\|_q
  \end{align}
  where $r+1/p + 1/q = 1$.  For any $g \in \mathcal{G}$,
  $\|g \|_\infty \leq \sum_{i=1}^m |a_i|$. We can therefore set
  $r = 1 - \epsilon$ for an arbitrarily small $\epsilon > 0$.  Letting
  $k = |i -j|$ and applying the above inequality, we see that:
  \begin{align*}
    \left| \Cov(\sqrt{t_0} P_t g_1, \sqrt{t_0} P_{N} g_2) \right| &= \left| \Cov \left( \frac{1}{\sqrt{t_0}} \sum_{i=1}^{t_0} g_1(z_i),
                                                                    \frac{1}{\sqrt{t_0}}\sum_{j=t_0+1}^{N} g_2(z_j) \right) \right|
    \\ &\lesssim \frac{1}{t_0}  \sum_{i=1}^{t_0} \sum_{j=t_0+1}^{N} \ \beta(|i-j|)^r
\\ &\lesssim \frac{1}{t_0} \sum_{k=1}^{\ceil*{t_0/2}}  k \  \beta(k)^r \rightarrow 0
  \end{align*}
\end{proof}
Since we have shown both (a) and (b), we have the joint convergence of the
empirical process defined in (\ref{H process}) to a tight element in
$\ell^\infty(\cF) \times \ell^\infty(\cF)$.  Furthermore, since each of the
finite dimensional distributions are Gaussian, the limiting process is
Gaussian.  Asymptotic independence of the marginal Gaussian processes follows
from asymptotic independence of finite dimensional distributions; see
\citet[Lemma 11.1]{Kallenberg-mod-prob}.
\end{prooflemma}

\begin{remark}
When the stochastic process is compactly supported, this result holds for an arbitrary permissible VC class.  In a more general setting, there is a trade-off between assumptions on moments and mixing rates.
\end{remark}

\begin{remark}
For $d$-dimensional distribution functions, this lemma holds under $\alpha$-mixing due to a result of \citet[Theorem 7.3]{Rio-asymptotic-theory-weakly-dependent}.
    
\end{remark}

Now, finite VC dimension, along with the assumed mixing conditions, allows us
to invoke the following theorem due to
\citet{Radulovic-bootstrap-empirical-process-stationary}:

\begin{proposition}[\citet{Kosorok-intro}, Theorem 11.26, after \citet{Radulovic-bootstrap-empirical-process-stationary}, Theorem 1]
  \label{Bootstrap CLT}
  Let $Y$ be a stationary sequence taking values in $\R^d$ for some $d \in \N$
  with marginal distribution $P$, and let $\cF$ be a class of functions in
  $L_2(P)$. Let $\mathbb{G}^*_n(f) = P^*_n f - P_n f$ Also
  assume that $Y_1^*, \ldots, Y_n^*$ are generated by the CBB procedure
  with block size $b(n) \rightarrow \infty$ as $n \rightarrow \infty$, and that
  there exists $2 < p < \infty$, $q > p/(p-2)$, and $0 < \rho < (p-2)/[2(p-1)]$
  such that
  \begin{enumerate}
  \item[(a)] $\lim \sup_{k \rightarrow \infty} k^q \beta_Y(k) < \infty$
  \item[(b)] $\cF$ is permissible, VC, and has envelope $F$ satisfying
    $P^{out} F^P < \infty$, and
  \item[(c)] $\lim \sup_{n \rightarrow \infty} n^{- \rho} b(n) < \infty$.
  \end{enumerate}
  Then $\mathbb{G}_n^* \overset{P}{\underset{W} \rightsquigarrow}  \mathbb{G}$ in
  $\ell^\infty(\cF)$, where $\mathbb{G}$ is a mean 0 Gaussian Process with
  covariance structure given in (\ref{covariance function}).
\end{proposition}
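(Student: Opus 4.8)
The plan is to follow the two-part recipe for weak convergence of empirical processes, but carried out \emph{conditionally on the data}: establish (i) conditional convergence of the finite-dimensional distributions of $\mathbb{G}_n^*$ to those of $\mathbb{G}$, and (ii) conditional asymptotic equicontinuity of $\{\mathbb{G}_n^*(f)\}_{f \in \cF}$ over a totally bounded semimetric, and then invoke the conditional analogue of the ``fidi $+$ tightness $\Rightarrow$ weak convergence'' criterion. The structural fact that makes everything go is that the CBB draws $k_n \approx n/b(n)$ blocks of length $b(n)$ i.i.d.\ (with replacement) from the $n$ available circular blocks; hence, conditionally on $Y_1,\dots,Y_n$, the bootstrapped measure $P_n^*$ is an average of i.i.d.\ block-contributions. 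This lets me import the unconditional i.i.d.\ empirical-process machinery applied to ``block functionals,'' at a cost that is dictated by $b(n)$.

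For (i), fix $f_1,\dots,f_m \in \cF$ and reduce to a single $f = \sum_j \lambda_j f_j$ by the Cram\'er--Wold device. I would write $\mathbb{G}_n^*(f) = \sqrt{n}\,(P_n^* f - P_n f)$ as a normalized sum, over the resampled blocks, of centered block averages of $f$, which are conditionally i.i.d., so that the conditional CLT follows from the Lindeberg--Feller theorem. Two checks are needed. First, the conditional variance must converge in probability to $\Gamma(f,f)$ of (\ref{covariance function}); this is precisely a block-based long-run-variance estimate, and its consistency is where mixing enters, through condition (a), $\limsup_k k^q \beta_Y(k) < \infty$, together with $b(n)\to\infty$. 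Second, the conditional Lindeberg condition follows from the envelope moment bound $P^{out}F^p < \infty$ of condition (b) and from $b(n)/n \to 0$, which is implied by condition (c). This yields conditional fidi convergence to the mean-zero Gaussian vector with covariance $\Gamma$.

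Part (ii) is the main obstacle. I need that for every $\epsilon > 0$, $\lim_{\delta \downarrow 0} \limsup_n P^{out}\big(\sup_{\rho(f,g) < \delta} |\mathbb{G}_n^*(f) - \mathbb{G}_n^*(g)| > \epsilon\big) = 0$, where the inner probability is taken conditionally on the data. Since conditionally the process is a sum of i.i.d.\ block-indexed processes, the tool of choice is symmetrization (inserting Rademacher signs over blocks) followed by a chaining argument against the uniform entropy integral of $\cF$, which is finite because $\cF$ is VC (condition (b)). The delicate bookkeeping is that each independent unit is a whole block of length $b(n)$, so the process attached to one unit is itself a sum of $b(n)$ \emph{dependent} indicators; the resulting maximal inequality therefore carries factors of $b(n)$ that must be absorbed by the $\sqrt{n}$ normalization and by the available moments. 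Balancing $b(n) \sim n^\rho$ against the exponents $p$ and $q$ is exactly what forces the admissible ranges $q > p/(p-2)$ and $0 < \rho < (p-2)/[2(p-1)]$ in the hypotheses, and controlling the within-block contribution while retaining enough independent blocks for concentration is, I expect, the crux of the argument. Condition (a) is used a second time here, to bound the cross-block covariances that appear in the second-moment estimates and the small bias coming from the circular wrapping and final-block truncation.

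Combining (i) and (ii) with the total boundedness of $(\cF,\rho)$ gives $\mathbb{G}_n^* \overset{P}{\underset{W} \rightsquigarrow} \mathbb{G}$ in $\ell^\infty(\cF)$. The limit is automatically the mean-zero Gaussian process with covariance (\ref{covariance function}) already identified in Proposition \ref{CLT VC}, since the conditional variances were matched to $\Gamma$ in the fidi step; in particular the bootstrap reproduces the \emph{same} long-run covariance, which is what the functional delta method for the bootstrap (Proposition \ref{FDM}) will later require.
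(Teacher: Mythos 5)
Your proposal and the paper part ways at the outset: the paper does not prove this proposition at all. It imports it wholesale from \citet{Radulovic-bootstrap-empirical-process-stationary} (via Theorem 11.26 of \citet{Kosorok-intro}), and the only proof content it supplies is the short remark in Appendix \ref{Radulovic Modification} --- namely that the objects being resampled are the overlapping $d$-dimensional chunks $z_i=(y_i,\ldots,y_{i+d-1})$, so that resampling ``blocks of blocks'' is still a CBB, and applying an $f\in\cF$ to the chunks returns one to a real-valued stationary sequence to which the cited theorem applies. You instead attempt to reconstruct the proof of the cited theorem itself, and your outline is a faithful description of how Radulovi\'c's argument actually runs: conditional on the data the resampled blocks are i.i.d., fidi convergence follows from Cram\'er--Wold plus a Lindeberg CLT once the conditional (block-based long-run) variance is shown consistent for $\Gamma(f,f)$, and the conditional asymptotic equicontinuity is obtained by symmetrization over blocks and chaining against the VC uniform entropy integral, with the exponent constraints on $p$, $q$, $\rho$ emerging from the moment/mixing/block-length trade-offs.

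That said, two things keep this from being a complete substitute for the citation. First, the step you yourself flag as the crux --- the conditional equicontinuity bound, where each independent unit is a sum of $b(n)$ dependent terms and the factors of $b(n)$ must be absorbed --- is only gestured at (``I expect''); all of the real difficulty and every one of the hypotheses $q>p/(p-2)$, $\rho<(p-2)/[2(p-1)]$ lives there, and Radulovi\'c needs a truncation of the envelope and a careful maximal inequality to carry it out, not just generic symmetrization. Second, you do not address the one adaptation the paper actually has to make: the proposition is applied to the chunked process $\{Z_i\}$ built from overlapping windows of the scalar series, so one must note that this process is stationary with $\beta$-mixing coefficients controlled by $\beta_Y(\cdot-d+1)$ and that the resampling scheme on chunks coincides with the CBB of the theorem. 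Neither point invalidates your approach --- it is the right skeleton --- but as written it is an outline of a known proof rather than a proof, whereas the paper deliberately treats the result as a black box and only justifies its applicability.
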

Therefore, it follows that:
\begin{align}
 \mathbb{G}_{t}^* \overset{P}{\underset{W} \rightsquigarrow} \mathbb{G}
\end{align}
See Appendix \ref{Radulovic Modification} for details.

Since $\cF$ is permissible, by \citet{Yu-rates-of-convergence} the
measurability criteria are satisfied.

Finally, the functional delta method requires tangential Hadamard
differentiability of $\phi_{test}(A,B) - \phi_{train}(A,B) $ at our limit point
(F,F).  Note that $\phi_{test}(\cdot)$ can be expressed as the following
composition of mappings:
\begin{align}
\phi_{test} : (A,B) \xrightarrow{(a)} (B, \hat{\theta}(A)) \xrightarrow{(b)} (B, L(z,\hat{\theta}(A))) \xrightarrow{(c)} \int L(z,\hat{\theta}(A)) \ dB
\end{align}

Due to the Chain Rule, which we will state below, it will suffice to establish
tangential Hadamard differentiability for each of the intermediate mappings.
\begin{proposition}[Chain Rule, \citet{van-der-Vaart-Wellner-weak-conv}, Lemma 3.9.3]
\label{Chain Rule}
If $\phi: \mathbb{D}_\phi \subset \mathbb{D} \mapsto \mathbb{E}_\phi$ is
Hadamard differentiable at $\theta \in \mathbb{D}_\psi$ tangentially to
$\mathbb{D}_0$ and $\psi: \mathbb{E}_\psi \mapsto \mathbb{F}$ is Hadamard
differentiable at $\phi(\theta)$ tangentially to $\phi_\theta'(\mathbb{D}_0)$,
then $\psi \circ \phi : \mathbb{D}_\phi \mapsto \mathbb{F}$ is Hadamard
differentiable at $\theta$ tangentially to $\mathbb{D}_0$ with derivative
$\psi'_{\phi(\theta)} \circ \phi_\theta'$.
\end{proposition}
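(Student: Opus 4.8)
The plan is to verify the defining condition of tangential Hadamard differentiability directly for the composition $\psi \circ \phi$. By definition it suffices to fix arbitrary sequences $t_n \to 0$ in $\R \setminus \{0\}$ and $h_n \to h$ with $h \in \mathbb{D}_0$ (and with $\theta + t_n h_n \in \mathbb{D}_\phi$), and to show that the difference quotient
\[
\frac{(\psi\circ\phi)(\theta + t_n h_n) - (\psi\circ\phi)(\theta)}{t_n}
\]
converges to $(\psi'_{\phi(\theta)}\circ\phi_\theta')(h)$. First I would observe that $\psi'_{\phi(\theta)}\circ\phi_\theta'$ is an admissible candidate derivative: it is a composition of two continuous linear maps, hence itself continuous and linear from $\mathbb{D}$ to $\mathbb{F}$, as the definition requires.

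The core of the argument is a single change of variables that reduces the composite quotient to a quotient for $\psi$ alone. Define the inner difference quotient
\[
g_n \;\equiv\; \frac{\phi(\theta + t_n h_n) - \phi(\theta)}{t_n},
\]
so that $\phi(\theta + t_n h_n) = \phi(\theta) + t_n g_n$. Since $\phi$ is Hadamard differentiable at $\theta$ tangentially to $\mathbb{D}_0$ and $h \in \mathbb{D}_0$, applying the defining property to the very sequences $(t_n, h_n)$ gives $g_n \to \phi_\theta'(h)$. Two features of this limit are what make the chain rule work: the limit $\phi_\theta'(h)$ lies in $\phi_\theta'(\mathbb{D}_0)$, the tangent set to which $\psi$ is assumed differentiable; and, writing $\mu \equiv \phi(\theta)$, the points $\mu + t_n g_n = \phi(\theta + t_n h_n)$ lie in the domain of $\psi$ (here $\mathbb{E}_\phi \subseteq \mathbb{E}_\psi$, so the composition is well defined), so the outer quotient below is legitimate.

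It then remains to rewrite and pass to the limit. Substituting, the composite difference quotient equals
\[
\frac{\psi(\mu + t_n g_n) - \psi(\mu)}{t_n},
\]
which is exactly the Hadamard difference quotient for $\psi$ at $\mu = \phi(\theta)$ taken along the scalar sequence $t_n \to 0$ and the perturbation sequence $g_n \to \phi_\theta'(h) \in \phi_\theta'(\mathbb{D}_0)$. Invoking Hadamard differentiability of $\psi$ at $\phi(\theta)$ tangentially to $\phi_\theta'(\mathbb{D}_0)$ yields convergence of this quotient to $\psi'_{\phi(\theta)}(\phi_\theta'(h)) = (\psi'_{\phi(\theta)}\circ\phi_\theta')(h)$, which is the claim.

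I expect the only genuinely delicate point — and the reason Hadamard rather than merely Gateaux (directional) differentiability is indispensable — to be that the perturbation directions $g_n$ fed into $\psi$ are not a fixed vector but a sequence that only converges to $\phi_\theta'(h)$. The Hadamard definition is precisely strong enough to accommodate such varying directions, so no uniformity beyond the stated hypotheses is needed; the care required is entirely in matching the two hypotheses, namely checking that $g_n$ converges and that its limit belongs to the correct tangent set $\phi_\theta'(\mathbb{D}_0)$ to which $\psi$ is assumed differentiable.
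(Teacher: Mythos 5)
Your proof is correct and is essentially the argument behind the cited result: the paper does not prove this proposition itself but quotes it from \citet{van-der-Vaart-Wellner-weak-conv} (Lemma 3.9.3), whose proof is exactly your substitution $g_n = \left(\phi(\theta + t_n h_n) - \phi(\theta)\right)/t_n \to \phi_\theta'(h) \in \phi_\theta'(\mathbb{D}_0)$ followed by an application of the Hadamard differentiability of $\psi$ along the varying directions $g_n$. You also correctly isolate the one genuinely delicate point, namely that the tolerance of Hadamard (as opposed to Gateaux) differentiability to perturbation sequences that merely converge is what makes the composition argument go through.
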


Differentiability of (a) follows from \ref{estimator condition}, (b) from
\ref{loss condition}, and (c) follows from a multivariate generalization of the
following proposition (see Appendix \ref{van der Vaart Wellner Modification}
for details):

\begin{proposition}[Integration,  \citet{van-der-Vaart-Wellner-weak-conv}, Lemma 3.9.17]
  \label{Integration}
  Given a cadlag function $A$ and a function of bounded variation $B$ on an
  interval $[a,b] \in \bar{\R}$, define
  \begin{align}
    \phi(A,B) = \int_{[a,b]} A \ dB
  \end{align}
  Then, $\phi: D[a,b] \times BV_M[a,b] \mapsto \R$ is Hadamard
  differentiable at each $(A,B) \in D_\phi$ such that $\int |dA| < \infty$.
  The derivative is given by:
  \begin{align}
    \phi_{A,B}^\prime(\alpha, \beta) = \int_{[a,b]} A \ d \beta + \int_{[a,b]} \alpha \ dB
  \end{align}
\end{proposition}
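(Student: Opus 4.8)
The plan is to use the bilinearity of $\phi(A,B) = \int_{[a,b]} A \, dB$ together with a telescoping trick that avoids any quadratic remainder. Fix $(A,B)$ in the domain with $\int |dA| < \infty$ and take $t_n \to 0$, $(\alpha_n, \beta_n) \to (\alpha, \beta)$ uniformly with $A_n := A + t_n\alpha_n$ cadlag and $B_n := B + t_n\beta_n \in BV_M[a,b]$. Writing $\int A_n \, dB_n - \int A \, dB = \int (A_n - A)\, dB_n + \int A \, d(B_n - B)$ and dividing by $t_n$ gives exactly
\begin{align*}
\frac{\phi(A_n, B_n) - \phi(A,B)}{t_n} = \int_{[a,b]} \alpha_n \, dB_n + \int_{[a,b]} A \, d\beta_n,
\end{align*}
so the whole problem reduces to showing the first term tends to $\int \alpha \, dB$ and the second to $\int A \, d\beta$.

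For the first term I would split once more, $\int \alpha_n \, dB_n - \int \alpha \, dB = \int (\alpha_n - \alpha)\, dB_n + \int \alpha \, d(B_n - B)$. The leading piece is controlled by $\norm{\alpha_n - \alpha}_\infty \cdot \Var(B_n) \le M \norm{\alpha_n - \alpha}_\infty \to 0$, crucially using that every $B_n$ stays in $BV_M$, so the total variations are uniformly bounded rather than blowing up like $t_n^{-1}$. The remaining piece $\int \alpha \, d(B_n - B)$ is handled by noting that $B_n - B \to 0$ uniformly with uniformly bounded variation, which forces the signed measures $d(B_n - B)$ to converge weakly to zero; tested against the fixed integrand this integral vanishes, with the one delicate point being common discontinuities of $\alpha$ and $B$, which the tangent-set restriction is designed to rule out.

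The genuine difficulty is the second term, $\int A \, d\beta_n$, because I only control $\beta_n$ in the supremum norm and its total variation may diverge, so I cannot treat $d\beta_n$ as a measure converging to $d\beta$. The key maneuver is integration by parts for cadlag functions of bounded variation, which transfers the differential off $\beta_n$ and onto $A$: schematically $\int A \, d\beta_n = [A\beta_n]_a^b - \int \beta_{n,-}\, dA$ up to corrections at common jumps. Since $\int |dA| < \infty$ makes $dA$ a finite signed measure, the uniform convergence $\beta_n \to \beta$ now yields $\int \beta_{n,-}\, dA \to \int \beta_- \, dA$ together with convergence of the boundary terms; undoing the integration by parts then identifies the limit as $\int A \, d\beta$.

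The main obstacle I anticipate is executing this integration by parts cleanly for cadlag integrands — tracking left- versus right-continuous versions and the contribution of simultaneous jumps of $A$ and $\beta_n$ — and then lifting the one-dimensional formula to the multivariate setting the main theorem actually requires, where integrating by parts in each of the $d$ coordinates produces a sum of $2^d$ boundary-and-interior terms that must each be shown to converge. Continuity and linearity of the candidate derivative $\phi'_{A,B}(\alpha,\beta) = \int A \, d\beta + \int \alpha \, dB$ are then immediate from the same bilinear structure, completing the verification of tangential Hadamard differentiability.
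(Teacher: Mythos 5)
Your proposal is correct in outline and lands on the same key estimates as the paper, but it organizes the remainder differently, and the difference is worth noting. The paper (following \citet{van-der-Vaart-Wellner-weak-conv}) compares the difference quotient to the derivative evaluated at the \emph{perturbations} $(\alpha_t,\beta_t)$ rather than at the limit $(\alpha,\beta)$; by bilinearity the remainder then collapses to the single term $\int \alpha_t\, d(B_t-B)$, split as $\int(\alpha_t-\alpha)\,d(B_t-B)+\int\alpha\,d(B_t-B)$, so the term $\int A\,d\beta_t$ that you identify as the ``genuine difficulty'' never enters the remainder at all. The cost of that trick is a separate continuity step, $\phi'_{A,B}(\alpha_t,\beta_t)\to\phi'_{A,B}(\alpha,\beta)$, and in particular $\int A\,d\beta_t\to\int A\,d\beta$ --- which is exactly the integration-by-parts argument you carry out explicitly (and which is needed in any case to \emph{define} $\int A\,d\beta$ when $\beta$ has unbounded variation; the paper relegates this to Remark~\ref{stochastic integral} rather than the proof body). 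So your route is slightly longer but more self-contained on that point, while the paper's centering trick is the slicker bookkeeping. The one estimate both proofs must confront head-on, $\int\alpha\,d(B_t-B)\to 0$, is handled identically: your ``weak convergence of the signed measures'' is, once unpacked, the paper's discretization argument (approximate $\alpha$ uniformly by a step function, sum increments of $B_t-B$ over the grid, and use $\|B_t-B\|_\infty\to 0$ together with the uniform variation bound $2M$); note that because the limiting measure is zero, the common-discontinuity worry you raise does not actually bite for this step. Finally, be aware that the multivariate version the main theorem needs is proved in the paper for \emph{continuous} $\alpha$ with $B$ of bounded Hardy--Krause variation, which sidesteps the cadlag jump bookkeeping you flag as the main obstacle; if you insist on cadlag perturbations in $d>1$ your $2^d$-term integration by parts becomes genuinely painful, so restricting the tangent set as the paper does is the pragmatic move.
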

Condition \ref{loss condition} is stronger than what is needed to establish
Hadamard differentiability for step (b), but ensures that the derivative is
well-defined.  See Appendix \ref{van der Vaart Wellner Modification} for
details.

Similar reasoning holds for $\phi_{train}(\cdot)$ and Hadamard
differentiability of the difference holds due to the fact that the derivative
is a linear operator.  Then by the Functional Delta Method for the bootstrap,
the bootstrap is consistent for $\eta^*(\hat{f}_t)$.

\begin{remark}
  Adding an estimate of the $1-\alpha$ quantile of the generalization error to
  the training error is not the only way to construct confidence intervals.  In
  particular, one could also bootstrap the test error.
\end{remark}

 \subsection{Application of main theorem to $AR(p)$ models under weak assumptions}
 The motivating example for this theorem is the setting in which we use an $AR(p)$ model to predict a stationary $\beta$-mixing process.  We will state a corollary of our theorem that establishes bootstrap consistency of our procedure in this setting under squared error loss.

\begin{corollary}
Let $Y$ be a compactly-supported stationary stochastic process satisfying the mixing condition \ref{mixing condition} and $\Theta$ be a compact subset of $\mathbb{R}^p$ for some $p \in \mathbb{N}$.  Suppose $L(z,\theta) = [ z^T (-\theta,1)]^2$, where $\hat{\theta}(F)$ is the least square estimator for an $AR(p)$ model.  Further suppose that $b(n)$ satisfies \ref{block condition}.
Then the bootstrap is consistent for $\eta^*(\hat{f}_t^*)$.
\end{corollary}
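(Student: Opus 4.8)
The plan is to verify that the AR($p$) setup with squared-error loss satisfies all four hypotheses of the main theorem, after which the conclusion follows immediately by invoking it. Conditions \ref{mixing condition} and \ref{block condition} are assumed outright in the statement, so the real work reduces to checking the loss condition \ref{loss condition} and the estimator condition \ref{estimator condition}; along the way I would also confirm the standing requirement that $L \in \ell^\infty(\mathcal{Z} \times \Theta, \R)$. Writing $z = (z_1, \dots, z_{p+1})$ so that $z^T(-\theta,1) = z_{p+1} - \sum_{j=1}^p \theta_j z_j$ is the one-step AR($p$) residual (hence $d = p+1$), the loss $L(z,\theta) = [z^T(-\theta,1)]^2$ is a polynomial of degree two in both arguments; since $z$ ranges over the compact cube $[-M,M]^{p+1}$ and $\theta$ over the compact set $\Theta$, $L$ is bounded, so it lies in $\ell^\infty(\mathcal{Z}\times\Theta,\R)$.

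For \ref{loss condition}, differentiability in $\theta$ is immediate: $L(z,\cdot)$ is a quadratic map on the finite-dimensional space $\Theta \subset \R^p$, where tangential Hadamard differentiability coincides with ordinary differentiability, and its gradient $-2[z^T(-\theta,1)]\,z_{1:p}$ depends continuously and linearly on the perturbation. For the smoothness in $z$, observe that $L(z,\theta_0)$ is a degree-two polynomial in $z$, so all mixed partial derivatives of order three or higher vanish identically, while those of order one and two are affine (respectively constant) in $z$ and hence bounded on the compact set $\mathcal{Z}$; this supplies the bounded mixed partials up to order $d = p+1$ demanded by \ref{loss condition}.

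The substantive step is \ref{estimator condition}. Here I would exhibit the least-squares functional as a composition of Hadamard-differentiable maps and apply the chain rule (Proposition \ref{Chain Rule}). Separating $z$ into predictors $x = z_{1:p}$ and response $y = z_{p+1}$, the normal equations give the closed form
\[
\hat\theta(F) = \Sigma_{xx}(F)^{-1}\,\Sigma_{xy}(F), \qquad \Sigma_{xx}(F) = \int x x^T \, dF, \quad \Sigma_{xy}(F) = \int x\,y \, dF.
\]
Each entry of $\Sigma_{xx}(F)$ and $\Sigma_{xy}(F)$ is an integral of a fixed bounded polynomial against the distribution function $F$, so the maps $F \mapsto \Sigma_{xx}(F)$ and $F \mapsto \Sigma_{xy}(F)$ are Hadamard differentiable by the multivariate generalization of the integration lemma, Proposition \ref{Integration}, already invoked for step (c) of the main proof. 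Matrix inversion $\Sigma_{xx} \mapsto \Sigma_{xx}^{-1}$ and the matrix-vector product $(\Sigma_{xx}^{-1}, \Sigma_{xy}) \mapsto \Sigma_{xx}^{-1}\Sigma_{xy}$ are smooth maps between finite-dimensional spaces, hence Hadamard differentiable on their natural domains; composing via Proposition \ref{Chain Rule} yields tangential Hadamard differentiability of $\hat\theta(\cdot)$ at $F$.

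The one genuine obstacle is that matrix inversion is differentiable only on the open set of nonsingular matrices, so the argument requires $\Sigma_{xx}(F) = \int x x^T \, dF$ to be invertible at the limiting distribution $F$; equivalently, the population design matrix of the AR($p$) regression must be nonsingular, which fails only in the degenerate case where the lagged coordinates of the process are almost surely linearly dependent. Assuming this standard non-degeneracy (which also guarantees that the least-squares minimizer is unique and that $\theta_0 = \hat\theta(F)$ is well-defined), all four conditions \ref{mixing condition}--\ref{estimator condition} hold, and the main theorem delivers consistency of the bootstrap for $\eta^*(\hat{f}_t^*)$.
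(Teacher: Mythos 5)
Your proposal is correct in substance and reaches the same conclusion, but it takes a genuinely different route on the key step, condition \ref{estimator condition}. You exploit the closed form of the least-squares estimator, $\hat\theta(F) = \Sigma_{xx}(F)^{-1}\Sigma_{xy}(F)$, write each moment matrix as an integral functional of $F$ (differentiable via the multivariate integration lemma), and then compose with matrix inversion and multiplication, which are smooth on finite-dimensional spaces. The paper instead never inverts anything explicitly: it casts least squares as a Z-estimator, i.e.\ the zero of the estimating equations $\Upsilon_j(\theta,A) = -2\int y_{t-j+1}(y_t - \theta^T x)\,dA$, invokes the van der Vaart--Wellner Z-estimator lemma (Proposition \ref{Z estimator}) to get Hadamard differentiability of the zero-finding map, and separately shows that $A \mapsto \Upsilon(\cdot,A)$ is Hadamard differentiable as a map into $\ell^\infty(\Theta,\R^{p})$ (using compactness of $\Theta$ and affineness in $\theta$ to upgrade pointwise to uniform convergence). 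Your approach is more elementary and transparent for this particular estimator; the paper's approach generalizes to estimators without closed forms. Both routes ultimately hinge on the same non-degeneracy condition --- you state it as invertibility of $\Sigma_{xx}(F)$, while in the paper it is buried in the hypothesis that $\dot\Phi_{\theta_0}$ be one-to-one and continuously invertible (these are the same matrix up to a factor of $2$); you deserve credit for flagging it explicitly, since the paper does not verify it. Your treatment of \ref{loss condition} matches the paper's direct sup-norm computation in spirit, with the same use of compactness of $\mathcal{Z}$ and $\Theta$ and the observation that a quadratic has bounded mixed partials of all required orders.
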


\begin{proof}
We will start by establishing \ref{loss condition}, appropriate tangential Hadamard differentiability of $L: \Theta \mapsto \ell^\infty(\mathcal{Z}, \mathbb{R})$ at $\theta_0$.  For notational convenience, let $z = (x,y)$, where $y = y_d$ and $x = (y_1, \ldots y_{d-1})$.  We will establish the required tangential Hadamard differentiability by showing that:
\begin{align}
\label{squared error hadamard derivative}
\norm{\frac{L(\cdot, \theta_0 + t_n h_n) -  L(\cdot, \theta_0)}{t_n} - L_{\theta_0}'(\cdot, h)}_\infty \rightarrow 0
\end{align}
where: 
\begin{align}
L_{\theta_0}'((x,y), h) = 2(y - \theta_0^T x) h^T x 
\end{align}

The expression on the LHS of (\ref{squared error hadamard derivative}) may be rewritten as:
\begin{align}
\label{squared error decomposition}
\norm{2 (y - x^T \theta_0)x^T(h_n - h) }_\infty + |t_n| \cdot \norm{ (x^T h_n)^2}_\infty 
\end{align}

For the first term in  (\ref{squared error decomposition}), there exists $C < \infty$ such that for all $h_n$:
\begin{align}
\max_{x,y}|2 (y - x^T \theta_0) x^T(h_n - h)| < C||h_n - h||_1
\end{align}
 Since $||h_n - h||_\infty \rightarrow 0$, this term goes to $0$. Since $\max_x (x^T h_n)^2 \rightarrow M^2 ||h||_1^2$ and $t_n \rightarrow 0$, the second term also converges to 0, and Hadamard differentiability at $\theta_0$ follows.  
 


The only condition that remains to be checked is \ref{estimator condition}, the tangential Hadamard differentiability of the least squares estimator.

We will start by introducing some notation.  Given a subset $\Theta$ of a
Banach space and another Banach space $\mathbb{L}$, let
$\ell^\infty(\Theta, \mathbb{L})$ be the Banach space of uniformly bounded
functions $z: \Theta \mapsto \mathbb{L}$.  Let $Z(\Theta, \mathbb{L})$ be
the subset consisting of all maps with at least one zero.  Let
$\phi: Z(\Theta, \mathbb{L}) \mapsto \Theta$ be a map that assigns one of
its zeros to each element $z \in Z(\theta, \mathbb{L})$. In the squared error
case, $\Theta$ represents the domain of the coefficients and $\mathbb{L}$
represents the codomain of the estimating equations.  Further let lin $\Theta$
denote the linear closure of $\Theta$.  Hadamard differentiability is
established by the following:

\begin{proposition}[Z-Estimators,  \citet{van-der-Vaart-Wellner-weak-conv}, Lemma 3.9.34]
\label{Z estimator}
Assume $\Phi: \Theta \mapsto \mathbb{L}$ is uniformly norm-bounded, is one-to-one, possesses a $\theta_0$ and has an inverse (with domain $\Phi(\Theta)$) that is continuous at 0.  Let $\Phi$ be Frechet differentiable at $\theta_0$ with derivative $\dot{\Phi}$, which is one-to-one and continuously invertible on lin $\Theta$. Then the map $\phi: Z(\Theta, \mathbb{L}) \subset \ell^\infty(\Theta, \mathbb{L}) \mapsto \Theta$ is Hadamard differentiable at $\Phi$ tangentially to the set of $z \in \ell^\infty(\Theta, \mathbb{L})$ that are continuous at $\theta_0$. The derivative is given by $\phi_\Phi(z) = - \dot{\Phi}_{\theta_0}^{-1}(z(\theta_0))$.
\end{proposition}

This result establishes Hadamard differentiability of the least squares
estimator as a function of $\Phi(\cdot)$.  We can view $\Phi_n(\cdot)$ as a
Hadamard differentiable mapping of distribution functions to complete the
proof.  We will show this last step in the proposition below:

\begin{proposition}
    Let $\Upsilon: \mathcal{A}  \subset  \ell^\infty(\mathcal{Z},\mathbb{R})  \mapsto  \ell^\infty(\Theta,\mathbb{R}^{d-1})$,  be the least squares estimator with $d-1$ parameters, where $\mathcal{A}$ is a subset in which integration is well-defined. Let the $j$th coordinate of $\Upsilon(\theta,A)$ be given by:
\begin{align}
\Upsilon_j(\theta,A) = - 2 \int y_{t-j+1}(y_t - \theta_1 y_{t-1} - \ldots - \theta_{d-1} y_{t-d+1}) dA \equiv \int \upsilon_j(z,\theta) dA
\end{align}
Then, $\Upsilon(\cdot,A)$ is Hadamard differentiable at a distribution function $F$, tangentially to $C([-M,M]^d)$\footnote{Since $\upsilon_j(z,\theta)$ has bounded variation in the Hardy-Krause sense, this tangent set is sufficient for well-definition of the integral by an integration by parts argument.  For details, see Remark \ref{stochastic integral}.}, with derivative of the $j$th coordinate given by:
\begin{align}
\dot{\Upsilon}_{F,j}(\cdot, h) = \int \upsilon_j(z, \cdot ) dh
\end{align}
\end{proposition}

\begin{proof}
  Since the codomain is finite-dimensional, it suffices to establish Hadamard
  differentiability for each coordinate.  Since the estimating equations are
  symmetric, we can further restrict our attention to one coordinate. First for
  $||h_n - h||_\infty \rightarrow 0$ and $t_n \rightarrow 0$, point-wise in
  $\theta$, notice that:
\begin{align}
\frac{\int \upsilon_j(z,\theta) d(F +t_n h_n) - \int \upsilon_j(z,\theta) dF }{t_n} &= \frac{t_n \int \upsilon_j(z,\theta) dh_n }{t_n} +\frac{\int \upsilon_j(z,\theta) d(F-F)}{t_n}
\\ &\rightarrow \int \upsilon_j(z, \theta) dh
\end{align}
Next, since $\Theta$ is compact, $\int \upsilon_j(z,\cdot) dh_n$ is convex (affine), point-wise convergence implies uniform convergence \citep[Thm 3.1.4]{Hiriat-urrut-lemarechel-fundamentals-convex-analysis}.
Therefore,
\begin{align}
\norm{ \frac{\int \upsilon_{j}(z,\cdot) d(F +t_n h_n) - \int \upsilon_j(z,\cdot) dF}{t_n} - \dot{\Upsilon}_{F,j}(h,\cdot) }_\infty \rightarrow 0
\end{align}
and Hadamard differentiability of $\Upsilon(\cdot,A)$ at $F$ follows.
\end{proof}
See Appendix \ref{van der Vaart Wellner Modification} for additional details
regarding the multivariate stochastic integration that results from applying
the Functional Delta Method. In particular, Remark \ref{stochastic integral}
provides an additional sufficient condition on the loss function that ensures
that the pathwise Riemann-Stieltjes integral exists a.s.  This condition is
satisfied for squared error loss.
\end{proof}

\label{sec:ar_model_application}

\section{Simulation Study}
\label{Simulation Study}
\label{sec:simulations}
To demonstrate the performance of our method in a non-asymptotic setting, we
examine three simulated examples.  In each example, we will use squared error
loss, an $AR(\rho)$ model for prediction, and data generating process
satisfying the mixing conditions imposed in our theorem.  Note that here we
consider distributions with unbounded support; our procedure seems to work with
the distributions considered supported on entire real line.

\subsection{ARMA$(p,q)$ DGP}
Suppose that the data generating process is of the form:
\begin{align}
X_t = \sum_{i=1}^p \varphi_i X_{t-i} + \sum_{j=1}^q \theta_j \epsilon_{t-j} + \epsilon_t
\end{align}
where the coefficients satisfy stationarity and
$\{\epsilon_j\}_{j \in \mathbb{Z}}$ is an iid sequence such that
$E(\epsilon_j) = 0$ and the distribution of $\epsilon_j$ is dominated by the
Lebesgue measure.  Then \citet{Doukhan-on-mixing} establishes that $X_t$ is
geometrically $\beta$-mixing.

We will simulate from an ARMA(2,2) process, with coefficients
$\varphi_1 = 0.5$, $\varphi_2 = 0.5$, $\theta_1 = 0.5$, and $\theta_2 = 0.25$
with $\epsilon_i \sim N(0,1)$.  We will use an $AR(1)$ model for prediction.

\subsection{AR-ARCH DGP}

We will consider an AR data generating process with heteroscedastic ARCH noise.
That is, consider:
\begin{align*}
X_t &= \sum_{i=1}^p \varphi_i X_{t-i} + \epsilon_{t}
\\ \epsilon_t &= \sqrt{h_t} z_t
\\ h_t &= \omega + \sum_{j=1}^r \alpha_j \epsilon_{t-j}^2
\end{align*}
where $z_t$ is an iid noise sequence.  More specifically,
\citet{Lange-Rahbek-Jensen-on-AR-ARCH} show that an AR(1)-ARCH(1) model
satisfying $E(\log(\alpha_1 z_t^2)) < 0$ and $|\varphi_1| < 1$ is stationary
and geometrically $\beta$-mixing.

In our simulation, we will set $\varphi_1 = 0.8$, $\alpha_1 = 0.99$, and
$z_t \sim N(0,1)$.  Note that our choice of $\alpha_1$ leads to a high but
finite variance.  We will use an $AR(3)$ model for prediction.


\subsection{Markov-switching DGP}
Breaks in the data are commonly modeled as a switch from one stationary
generating process to another, where the regime switch is determined by a
Markov Chain.  Here, we consider a data generating process that switches
between a finite number of ARMA$(p,q)$ models.  In particular, we will
follow the example used by \citet{Lee-Markov-switching}:
\[
y_t =
\begin{cases}
 1.5 y_{t-1} + 0.6 e_{t-1} + e_t & \ X = 1\\
0.9 y_{t-1} - 1.2 e_{t-1} + e_t & \ X = 2 \\
0.7 e_{t-1} & \ X = 3 \\
\end{cases}
\]
with transition matrix  $P$ given by
\[ \left( \begin{array}{ccc}
0 & 0.2 & 0.8 \\
0.7 & 0 & 0.3 \\
0.5 & 0 & 0.5 \end{array} \right)
\]
If $e_t$ is iid and $\E |e_t| ^5 < \infty$, then $y_t$ is geometrically ergodic
and since it is a stationary Markov Chain, it is also geometrically
$\beta$-mixing.  We will simulate using $e_t \sim N(0,1)$ and an $AR(2)$ model
for prediction.

\subsection{Coverage of proposed bootstrap procedure}

We summarize the behavior of our estimator in Figure \ref{fig:coverage}.  For
all data generating processes, we examine 11 sample sizes, ranging from 50 to 1000, and select a block
length using \citet{PolitisWhite2004}.  We simulate a time series of length 1000
to estimate the risk.  We consider 500 bootstrap replications and
generate 500 different bootstrap confidence intervals based on different runs
of the data generating process to calculate coverage.

\begin{figure}[h]
\centering
 \includegraphics[scale=0.45]{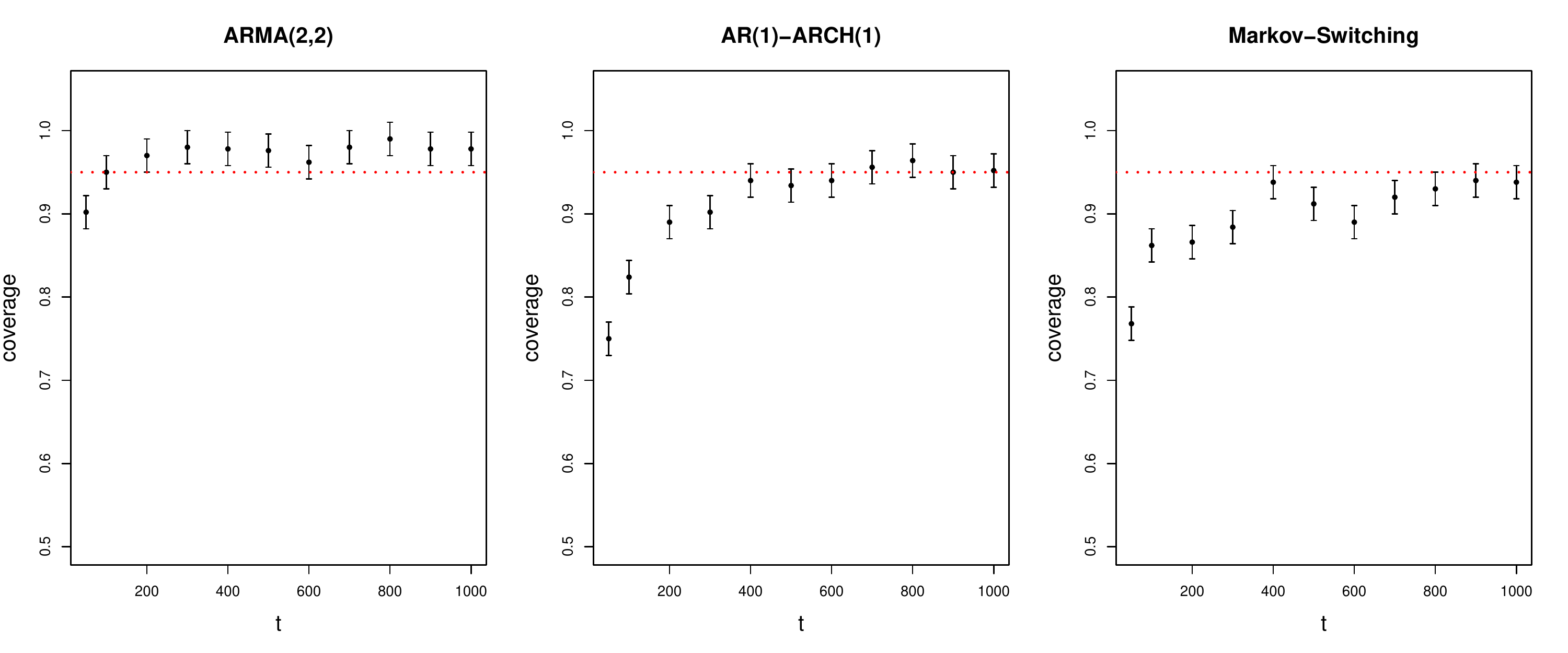}
\caption{Coverage of Bootstrapped Generalization Error Bounds}
\label{fig:coverage}
\end{figure}

For the AR(2,2) data generating process, we see that the bootstrap confidence
intervals have coverage close to desired $\alpha$-level even for sample sizes
as small as 25. Both the AR-ARCH(1) and Markov-switching data generating
processes exhibit poor coverage for small sample sizes but reach the desired
coverage level by 1000 observations.


\section{Discussion}
We propose a method based on the block bootstrap for constructing confidence
intervals for the risk. We succeed in showing bootstrap consistency with
minimal assumptions on the data generating process itself.  However, it may
seem that assumptions about mixing rates may be hard to verify.  While we do
not disagree with this sentiment entirely, the $\beta$-mixing coefficient is in
principle estimable \citep{risk-bounds-for-state-space-models}, and the cubic
mixing rate condition is satisfied for both $m$-dependent and geometrically
ergodic Markov Chains \citep{Bradley-strong-mixing}.


While we have not dealt directly with model selection, our work opens up
possibilities for that topic.  The difference of two risk functionals
satisfying the conditions of our theorem would also be Hadamard differentiable,
allowing for model selection, at least in the fixed-$d$ setting.  Our approach
could not distinguish between models with the same asymptotic risk, but this is
not a major concern if one presumes all models are mis-specified.  The
limitation to fixed $d$ may be more of an issue, and this is one more reason to
studying the growing-memory regime.

Another topic of future research is showing the viability of our procedure for
a wider range of models.  Showing Hadamard differentiability of more
complicated functionals may prove difficult, but
\citet{Hable-asymptotic-normality-of-kernel-methods} for instance is a
promising step in this direction.  In these cases, more conditions on the data
generating process may be needed, which may be a price worth paying to prove
guarantees for other models commonly used in data analysis.

\paragraph{Acknowledgments} We are grateful to Robert Kass, Daniel McDonald,
Alessandro Rinaldo and Valerie Ventura for helpful discussions.  We wish
to record special thanks to Prof.\ McDonald for his permission to develop
an idea proposed in \citep{McDonald-thesis}.

\appendix
\section{Proof Modifications}

At several points, we invoke propositions which are slight extensions or
modifications of ones established in the prior literature.  In the interest of
completeness, this appendix indicates how the published proofs need to be
adjusted.

\subsection{Proposition \ref{Bootstrap CLT}, after \citet[Theorem 1]{Radulovic-bootstrap-empirical-process-stationary}}
\label{Radulovic Modification}



Let $z_i = (y_i, \ldots y_{i+d-1})$.  By resampling these blocks of blocks, we
arrive at the CBB discussed in Section \ref{sec:method}.  After applying an
$f \in \cF$ to the $z_i$'s, we are again back to the case of real-valued
functions, and the rest of the argument carries through.



\subsection{Proposition \ref{CLT VC}, after \citet[Lemma 2.1]{Arcones-Yu-CLT}}
\label{Arcones Yu Modification}

Here, we need to confirm that the process
$\{\sqrt{t_0} (1/t_0 \sum_{i=t_0+1}^{N_0} f(X_i) - \Expect{f(X)}\}_\cF$ satisfies
the stochastic equicontinuity condition, defined in (\ref{stochastic
  equicontinuity}).  We will show this by making a small modification in Lemma
2.1, equation (2.14).  Permissibility implies that $\|\cdot \|_{\cF}$ is
measurable; therefore the following holds due to strict stationarity:
\begin{align}
&\ \ P( \|\sqrt{t_0} (1/t_0 \sum_{i=t_0+1}^{N} f(X_i) - \Expect{f(X)}\|_{\cF'(r;\|\cdot\|_p)} > \lambda ) \\
& = P( \|\sqrt{t_0} (1/t_0 \sum_{i=1}^{N-t_0} f(X_i) - \Expect{f(X)}\|_{\cF'(r;\|\cdot\|_p)} > \lambda )
\end{align}
At each $t$, the bound is the same as that of the process starting at $i=1$.

\subsection{Proposition \ref{Integration}, after \citet[Lemma 3.9.17]{van-der-Vaart-Wellner-weak-conv}}
\label{van der Vaart Wellner Modification}

The proof of \citet{van-der-Vaart-Wellner-weak-conv} generalizes to the
multivariate case if we consider a rectangular support region
$ \mathcal{R} \equiv \prod_{i=1}^d [a_i,b_i] \subset \mathbb{R}^d$.  In the
ensuing discussion of Lebesgue-Stieltjes and Riemann-Stieltjes integrals of the
form $\int f dg$, we will refer to $f$ as the integrand and $g$ as the
integrator. Lebesgue-Stieltjes integration requires that the function in the
integrand has bounded variation over the interval $[a,b]$.  In the univariate
case, this means that:
\begin{align}
\sup_{\mathcal{Y} \in \mathbb{Y}} \sum_{y \in \mathcal{Y}} | f(y^+) - f(y)| < \infty
\end{align}
where $\mathcal{Y}$ is a ladder on $[a,b]$, consisting of finitely many values
from this interval.  Suppose for a given ladder, we arrange each of the points
in increasing order: $y_0 < y_1, \ldots < y_m$.  The successor of a particular
$y$, denoted as $y^+$, is defined as the next element in the sequence.  The
supremum is over all such ladders.

A multivariate extension of Lebesgue-Stieltjes integrals requires a
generalization of this notion of bounded variation.  More than one such
generalization is possible; below we will discuss variation in the Hardy-Krause
sense, which will also require that we define variation in the Vitali sense.

We will start by introducing relevant concepts.  We will largely follow the
notation in \citet{Owen-multidimensional-variation}.  Let $[a,b]$ be a
hyperrectangle in $\mathbb{R}^d$. Suppose that $ u, v \subset \{1, \dots, d \}$
and $x,z \in [a,b]$ and $ u \cap v = \varnothing$.  Take $ x^u:z^v$ to be the
concatenation, resulting in a vector with values equal to $x$ for $i \in u$ and
equal to $z$ for $i \in v$.  Let $-v$ represent the complement of $v$.  Take
the d-fold alternating sum over $[a,b]$ to be
\begin{align}
\Delta(f;a,b) = \sum_{v \subset \{1, \ldots, d\} } (-1)^{|v|} f(a^v: b^{-v})
\end{align}
Now let $\mathcal{Y} = \prod_{j=1}^d \mathcal{Y}^j$, where $\mathcal{Y}^j$ is a
univariate ladder for the jth coordinate.  The variation of $f$ over
$\mathcal{Y}$ is:
\begin{align}
V_{\mathcal{Y}}(f) = \sum_{y \in \mathcal{Y}} |\Delta(f, y, y^+)|
\end{align}
By again taking the supremum of the variation over all possible ladders we
arrive at the Vitali notion of variation:
\begin{definition} The variation of $f$ on the hyperrectangle $[a,b]$ in the
  sense of Vitali, is:
  \begin{align}
    V_{\textbf{V}}(f) = \sup_{\mathcal{Y} \in \mathbb{Y}} V_{\mathcal{Y}}(f)
  \end{align}
\end{definition}

The Hardy-Krause notion of variation is closely related to that of Vitali
defined above.  It consists of restricting the function to take the value of
$b_i$ for each subset of the coordinates and summing the resulting Vitali
variations.
\begin{definition} The variation of $f$ on the hyperrectangle $[a,b]$ in the
  sense of Hardy and Krause is
\begin{align}
V_{\textbf{HK}}(f) =  \sum_{u \subset \{ 1,\ldots, d \}, u \neq \varnothing }  V_{\textbf{V}}(f(x^u : b^{-u}))
\end{align}
\end{definition}

Variation in the Hardy-Krause sense is generally larger than in the Vitali
sense; in fact, one can construct examples such that the former is infinite
while the latter is finite. See for example,
\citet{Beare-generalized-hoeffding}.

In the univariate case, bounded variation implies the existence of the Jordan
decomposition, where the integrator is expressed as the difference of two
monotonic functions.  Then, one can use the Caratheodory extension theorem to
uniquely match each Lebesgue-Stieltjes measure with an appropriate Lebesgue
measure.  An analogous result holds in the multivariate case if the integrator
is of bounded variation in the Hardy-Krause sense, making it a natural
condition to impose in our proof modification.

\begin{proposition}
Given a continuous function $A$ and a function of bounded variation $B$ in the sense of Hardy-Krause in the hyperrectangle $ \mathcal{R}\equiv \prod_{i=1}^d [a_i,b_i]$, define
\begin{align}
\phi(A,B) = \int A \ dB
\end{align}
Then, $\phi:  C(\mathcal{R}) \times BV_M (\mathcal{R}) \mapsto \R$ is Hadamard differentiable at each $(A,B) \in D_\phi$ such that $\int |dA| < \infty$.  The derivative is given by:
\begin{align}
\phi_{A,B}^\prime(\alpha, \beta) = \int A \ d \beta + \int \alpha \ dB
\end{align}
\end{proposition}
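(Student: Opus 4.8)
The plan is to follow the univariate argument behind Proposition \ref{Integration}, replacing the scalar integration-by-parts identity with its Hardy--Krause counterpart and tracking the boundary contributions that now appear over the faces of $\mathcal{R}$. I would fix a base point $(A,B)$ with $A$ continuous of finite Hardy--Krause variation (this is exactly the hypothesis $\int |dA| < \infty$) and $B \in BV_M(\mathcal{R})$, and take perturbations $A_s = A + s\alpha_s$, $B_s = B + s\beta_s$ with $s \downarrow 0$, $\alpha_s \to \alpha$ and $\beta_s \to \beta$ uniformly on $\mathcal{R}$, the tangent directions being continuous (cf.\ the tangent set $C([-M,M]^d)$). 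Since $B_s \in BV_M(\mathcal{R})$ we have $V_{\textbf{HK}}(B_s - B) \le 2M$ and $\|B_s - B\|_\infty = s\|\beta_s\|_\infty \to 0$. Bilinearity of the Stieltjes integral then gives the exact expansion
\begin{align}
\frac{\phi(A_s, B_s) - \phi(A, B)}{s} = \int A \, d\beta_s + \int \alpha_s \, dB + \int \alpha_s \, d(B_s - B),
\end{align}
so it suffices to send the three summands to $\int A \, d\beta$, $\int \alpha \, dB$, and $0$, respectively.

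Two of the three terms are dispatched by norm estimates. For the middle term, the pairing bound $|\int f \, dB| \le \|f\|_\infty V_{\textbf{HK}}(B)$ gives $|\int (\alpha_s - \alpha)\, dB| \le \|\alpha_s - \alpha\|_\infty V_{\textbf{HK}}(B) \to 0$, so $\int \alpha_s \, dB \to \int \alpha \, dB$. For the remainder I would split $\int \alpha_s \, d(B_s - B) = \int (\alpha_s - \alpha)\, d(B_s - B) + \int \alpha \, d(B_s - B)$; the first piece is at most $\|\alpha_s - \alpha\|_\infty \cdot 2M \to 0$, while the second vanishes because $\|B_s - B\|_\infty \to 0$ with $V_{\textbf{HK}}(B_s - B)$ bounded and $\alpha$ continuous, so that $\int \alpha \, dB_s \to \int \alpha \, dB$ by a multivariate Helly--Bray argument (approximate the continuous integrand uniformly by step functions adapted to a ladder and use uniform convergence of the integrators together with their bounded variation). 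Linearity together with the bound $|\phi_{A,B}'(\alpha,\beta)| \le V_{\textbf{HK}}(A)\|\beta\|_\infty + \|\alpha\|_\infty V_{\textbf{HK}}(B)$ confirms that the candidate derivative is continuous and linear.

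The remaining term $\int A \, d\beta_s \to \int A \, d\beta$ is the crux and the main obstacle, because $\beta_s$ converges only as an integrator and the pairing cannot be estimated directly against the varying measure $d\beta_s$. Here I would invoke the multivariate integration-by-parts formula in the Hardy--Krause framework (the alternating face expansion of \citet{Owen-multidimensional-variation}), which has schematically the form
\begin{align}
\int_{\mathcal{R}} A \, d\beta_s = (-1)^d \int_{\mathcal{R}} \beta_s \, dA + \sum_{\varnothing \neq u \subsetneq \{1, \ldots, d\}} (-1)^{|u|} \int_{\mathcal{R}_u} \beta_s \, d_u A,
\end{align}
where $\mathcal{R}_u$ is the face obtained by fixing the coordinates in $-u$ at their endpoints and $d_u A$ is the corresponding mixed variation of $A$ on that face. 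Because $A$ has finite Hardy--Krause variation, every facial restriction has finite Vitali variation, so each summand is controlled by $\|\beta_s - \beta\|_\infty$ times a finite variation; uniform convergence $\beta_s \to \beta$ then forces term-by-term convergence to the identical expansion of $\int A \, d\beta$. The genuinely new work relative to the univariate proof is this bookkeeping over the $2^d$ faces, for which the continuity of $A$ guarantees that each face integral is well defined; assembling the three limits yields Hadamard differentiability with the stated derivative $\phi_{A,B}'(\alpha,\beta) = \int A \, d\beta + \int \alpha \, dB$.
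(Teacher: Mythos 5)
Your proof is correct and takes essentially the same route as the paper's: the same bilinear expansion of $\bigl(\phi(A_s,B_s)-\phi(A,B)\bigr)/s$, the same supremum-norm-times-variation pairing bound for $\int(\alpha_s-\alpha)\,d(B_s-B)$, and the same discretize-the-continuous-integrand argument (your ``multivariate Helly--Bray'' step) for $\int \alpha\, d(B_s-B)\to 0$. The only real difference is that you explicitly verify $\int A\,d\beta_s \to \int A\,d\beta$ via the face-by-face integration-by-parts expansion, whereas the paper centers the quotient at $\phi'_{A,B}(\alpha_t,\beta_t)$ and leaves the continuity of $\beta\mapsto\int A\,d\beta$ implicit in the integration-by-parts definition of that integral (cf.\ Remark \ref{stochastic integral}); making that step explicit is a harmless, arguably welcome, addition.
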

\begin{proof}
For $\alpha_t \rightarrow \alpha$ and $\beta_t \rightarrow \beta$, define $A_t = A + t \alpha_t$ and $B_t = B+ t \beta_t$.  Write:
\begin{align}
\frac{\int A_t dB_t - \int A dB}{t} - \phi_{A,B}^\prime (\alpha_t, \beta_t) = \int \alpha d(B_t - B) + \int (\alpha_t - \alpha) d(B_t - B)
\end{align}

Analogous to \citet{van-der-Vaart-Wellner-weak-conv}, the second term on the
RHS can be bounded by $ 2C \|\alpha_t - \alpha \|_\infty \rightarrow 0$ for some $C < \infty$.  See \citet{Aistleitner-Dick-bounded-variation-signed-measures} for the relationship between the total variation in the Hardy-Krause sense and variation of the signed measure. 

For the
first term, we again make an identical argument.  Since $\alpha$ is continuous we can construct a $d$-dimensional grid such that $\alpha$ varies no more
than $\epsilon$ within a particular grid.  Let $\widetilde{\alpha}$ be the
discretization that is constant and takes the value $\alpha(y)$ where $y$ is
the left endpoint of the grid. Then,
\begin{align*}
\left| \int \alpha d(B_t - B) \right| &\leq \|\alpha - \widetilde{\alpha} \|_\infty 2C \\
& \ + \sum_{y \in \mathcal{Y}_{\text{grid}}} \int_{[y,y^+]} |\widetilde{\alpha}(y)| d(B_t - B)
\end{align*}
where a coordinate is fixed at $b_i$ when there is no successor $y_i^+$.  The
first item on the RHS can be made arbitrarily small by making $\epsilon$ small,
and the last term is bounded by
$ |\mathcal{Y}_{\text{grid}}| \cdot \|\alpha\|_\infty \prod_{i=1}^d |b_i -a_i |
\cdot \|B_t - B \|_\infty$, which can be made arbitrarily small for a fixed
partition.
\end{proof}

\begin{remark}
\label{stochastic integral}
Evaluating the Hadamard derivative after applying the Functional Delta Method
results in integration with respect to a Gaussian process.  This leads to some
issues as even the canonical empirical process
$\mathbb{G} = \lim_{t \rightarrow \infty} \sqrt{t} (F_t -F)$ is known to have
unbounded variation in the iid one-dimensional case for general $P$ even though
it is uniformly continuous; see \citet{Dudley-p-variation}.

We can follow \citet{van-der-Vaart-Wellner-weak-conv} and define the integral
via an integration by parts formula. Since tight Gaussian processes are
continuous a.s., it is enough to require that the integrand is of bounded
variation in the Hardy-Krause sense for the pathwise Riemann-Stieltjes integral
to exist a.s.  A simple condition to ensure bounded variation is that the
integrand is differentiable with bounded mixed partial derivatives
\citep[Prop.\ 13]{Owen-multidimensional-variation}.

Lebesgue-Stieltjes integration is often thought of as a generalization of
Riemann-Stieltjes integration.  This is indeed the case when the integrator has
bounded variation.  Riemann-Stieltjes integration requires that the integrand
and integrator share no points of discontinuity, whereas discontinuities are
not an issue when the Lebesgue-Stieltjes integral is transformed into a
Lebesgue integral.

However, an integration by parts formula for Lebesgue-Stieltjes integrals
generally requires both functions to be of bounded variation since the strategy
is to replace the integrand with a corresponding measure and use the
Fubini-Tonelli Theorem. A Riemann-Stieltjes integral can be shown to exist when
one side of the integration by parts formula exists, which does not require
both functions to be of bounded variation \citep{Prause-Steland-sequential-detection-3d}.
\end{remark}

Note that in the main theorem, we will fix $\theta$ for $L(z, \theta)$ in this step in the Chain Rule.

\section{Implications for K-fold Cross-Validation}
\label{sec:k-fold-cv}

\subsection{Functional Representation of K-fold Cross-Validation}
\label{functional representation}

Here we show that K-fold cross-validation can also be expressed as a functional
of distribution functions. Let $F_{t,i}$ correspond to the empirical
distribution function of the ith fold at time $t$.  We will ignore the issue of
fold membership when $t$ is not divisible by $k$ since this is irrelevant in
the triangular array setting.  A simple extension of Lemma \ref{joint
  convergence lemma} implies that:
\begin{align}
 \sqrt{t}
\begin{pmatrix}
 F_{t,1} - F \ \\
 \vdots \\
 F_{t,k} - F
 \end{pmatrix}
 \rightsquigarrow \mathbb{H}_k
\end{align}
where $\mathbb{H}_k$ is the $k$th product of $\mathbb{G}$, the limiting Gaussian process with covariance function given in (\ref{covariance function}).  Since we have weak convergence, we can apply the Functional Delta Method to show asymptotic Normality and consistency of Cross Validation under nearly identical conditions to the main theorem:

\begin{proposition}
Consider the Cross-Validated risk, defined as:
\begin{align}
R_{CV}(\hat{f}) = \frac{1}{k} \sum_{i=1}^k  \int L \left(z, \hat{\theta}\left(\frac{1}{k-1}\sum_{j \neq i} F_i \right) \right) dF_i
\end{align}
Suppose that conditions \ref{mixing condition}, \ref{loss condition}, \ref{estimator condition} are satisfied and that $K$ is a fixed constant. Then $\sqrt{t} \left[R_{CV}(\hat{f}) - R(\hat{f}) \right] \rightsquigarrow N(0, \sigma^2)$ for some $\sigma^2 < \infty$.
\end{proposition}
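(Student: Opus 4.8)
The plan is to mirror the proof of the main theorem: represent the cross-validated risk as a Hadamard-differentiable functional of the $k$ fold empirical distribution functions, then push the stated joint weak convergence through the ordinary (non-bootstrap) Functional Delta Method (e.g.\ \citet{van-der-Vaart-Wellner-weak-conv} or \citet{Kosorok-intro}). First I would define, for $A = (A_1, \ldots, A_k) \in \ell^\infty(\cF,\R)^k$, the map
\[
\Psi(A) = \frac{1}{k}\sum_{i=1}^k \int L\!\left(z, \hat{\theta}\!\left(\tfrac{1}{k-1}\sum_{j\neq i} A_j\right)\right) dA_i,
\]
so that $R_{CV}(\hat{f}) = \Psi(F_{t,1},\ldots,F_{t,k})$ and, since the $k$-fold average of $F$ is again $F$, $\Psi(F,\ldots,F) = \int L(z,\theta_0)\,dF = R(\hat{f})$. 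This supplies the centering constant and identifies $R(\hat{f})$ with the population risk at $\theta_0 = \hat{\theta}(F)$.

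Second, I would establish Hadamard differentiability of $\Psi$ at $(F,\ldots,F)$. Each summand has exactly the structure of $\phi_{test}$ from Lemma \ref{plugin lemma}, except that its ``training'' argument is the continuous linear map $A \mapsto \frac{1}{k-1}\sum_{j\neq i}A_j$, which is trivially Hadamard differentiable. By the Chain Rule (Proposition \ref{Chain Rule}), composed with differentiability of $\hat{\theta}$ (\ref{estimator condition}), of $L(\cdot,\theta)$ in $\theta$ (\ref{loss condition}), and of integration (the multivariate version of Proposition \ref{Integration}), each summand is Hadamard differentiable tangentially to the set of continuous functions. A finite sum of Hadamard-differentiable maps is Hadamard differentiable with derivative the sum of the derivatives, so $\Psi'_{(F,\ldots,F)}$ exists and is a continuous linear functional; condition \ref{loss condition} again guarantees, via Remark \ref{stochastic integral}, that the integrals appearing in the derivative remain well-defined when the integrator is a continuous limiting Gaussian path.

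Third, I would invoke the stated extension of Lemma \ref{joint convergence lemma}, namely $\sqrt{t}\,(F_{t,1}-F, \ldots, F_{t,k}-F) \rightsquigarrow \mathbb{H}_k$, and apply the Functional Delta Method to conclude $\sqrt{t}\,[R_{CV}(\hat{f}) - R(\hat{f})] \rightsquigarrow \Psi'_{(F,\ldots,F)}(\mathbb{H}_k)$. Because $\mathbb{H}_k$ is mean-zero Gaussian and $\Psi'_{(F,\ldots,F)}$ is linear and continuous, the image is a real-valued mean-zero Gaussian variable, i.e.\ $N(0,\sigma^2)$; boundedness of the derivative on the tight support of $\mathbb{H}_k$ gives $\sigma^2 < \infty$. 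Consistency is then immediate, since convergence in distribution at the $\sqrt{t}$ rate forces $R_{CV}(\hat{f}) - R(\hat{f}) \xrightarrow{P} 0$.

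On obstacles: the routine assembly above conceals two delicate points, both already treated elsewhere in the paper. The first is the joint convergence itself---establishing that the $k$ folds are \emph{asymptotically independent}, so that the limit is the product process $\mathbb{H}_k = \mathbb{G}^{\times k}$, requires the Davydov/mixing argument of Lemma \ref{joint convergence lemma} showing that the cross-fold covariances $\Cov(\sqrt{t}\,P_{t,i}g_1, \sqrt{t}\,P_{t,j}g_2)$ vanish, now applied to all $\binom{k}{2}$ pairs of folds in the triangular-array partition under condition \ref{mixing condition}. The second, which I expect to be the genuine sticking point, is well-definition of $\Psi'$ as integration against a Gaussian process: the derivative of the integration step produces terms of the form $\int L(z,\theta_0)\,d\mathbb{G}$, and since such empirical-process limits have unbounded variation (Remark \ref{stochastic integral}), one must interpret these pathwise via integration by parts and lean on \ref{loss condition} (bounded mixed partials, hence bounded Hardy--Krause variation of the integrand) to guarantee the Riemann--Stieltjes integral exists almost surely. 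Everything else---the chain rule, the linearity of the averaging maps, and the passage from weak convergence to consistency---is mechanical.
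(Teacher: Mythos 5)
Your proposal is correct and follows essentially the same route as the paper, which itself only sketches the argument: a $k$-fold extension of Lemma \ref{joint convergence lemma} (with the Davydov covariance bound applied to every pair of folds to get the product limit $\mathbb{H}_k$) followed by the ordinary Functional Delta Method applied to the cross-validation functional, whose Hadamard differentiability is assembled from \ref{loss condition}, \ref{estimator condition}, the integration lemma, and the chain rule exactly as in the main theorem. If anything, your write-up supplies more detail than the paper does, including the correct identification of the two genuinely delicate points (asymptotic independence of the folds and pathwise well-definition of the derivative as a stochastic integral).
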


We study the asymptotic Normality of the cross-validated risk using the same
data generating processes considered in Section \ref{Simulation Study} in the
plots below.  For each process, we run 5-fold cross-validation using an AR(2)
model for prediction on a range of sample sizes.  We examine asymptotic
Normality by simulating 10000 runs of the procedure and comparing the
standardized quantiles with those of a Normal distribution.  In general, we see
that the Normal approximation is poor for relatively small sample sizes
$(n = 50, n=100)$, with the quantiles exhibiting very heavy tails.  However,
with increasing $n$, we see that the tails become better behaved.  

\begin{figure}[H]
	\begin{center}$
	\begin{array}{lll}
        \includegraphics[width=.3\linewidth]{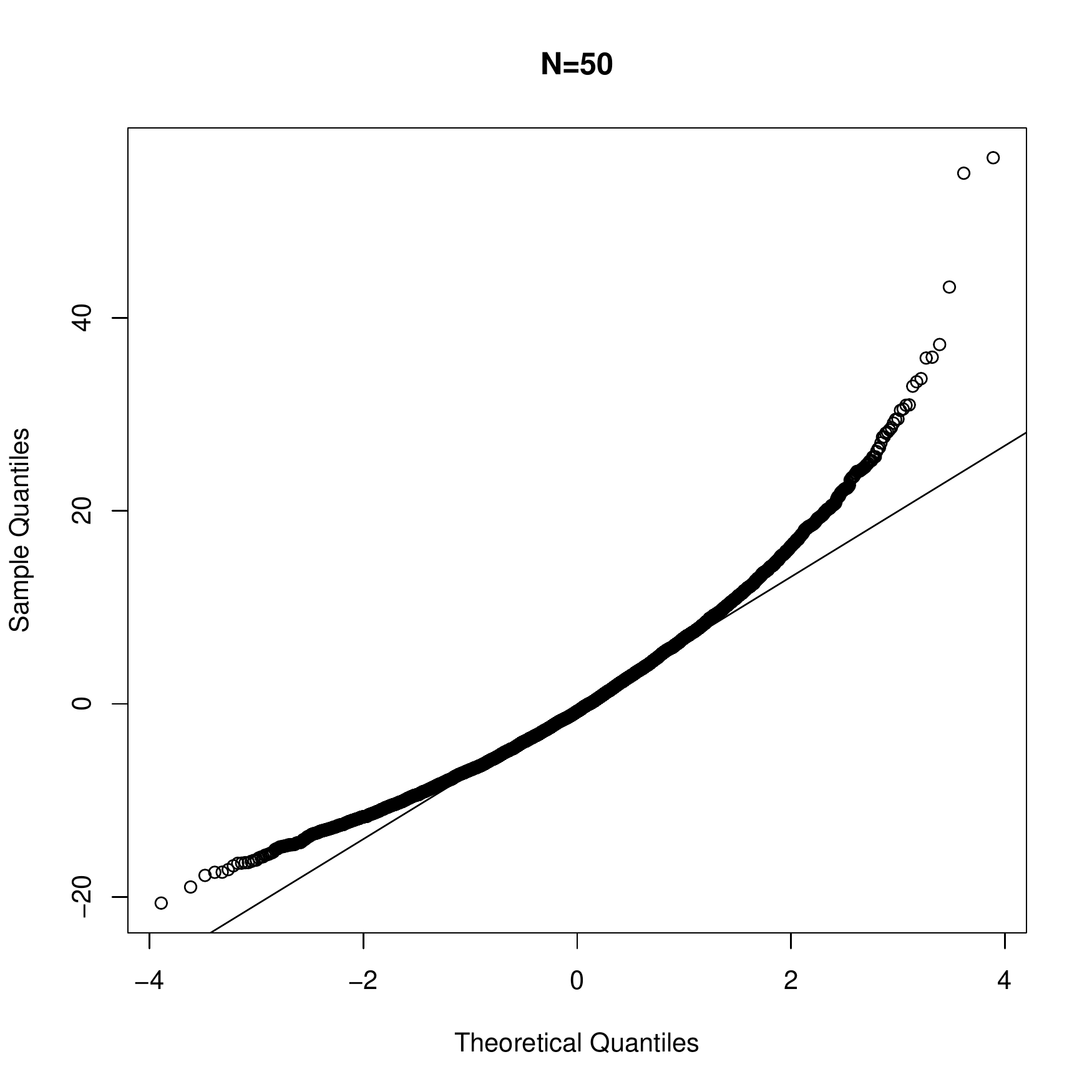}
        \includegraphics[width=.3\linewidth]{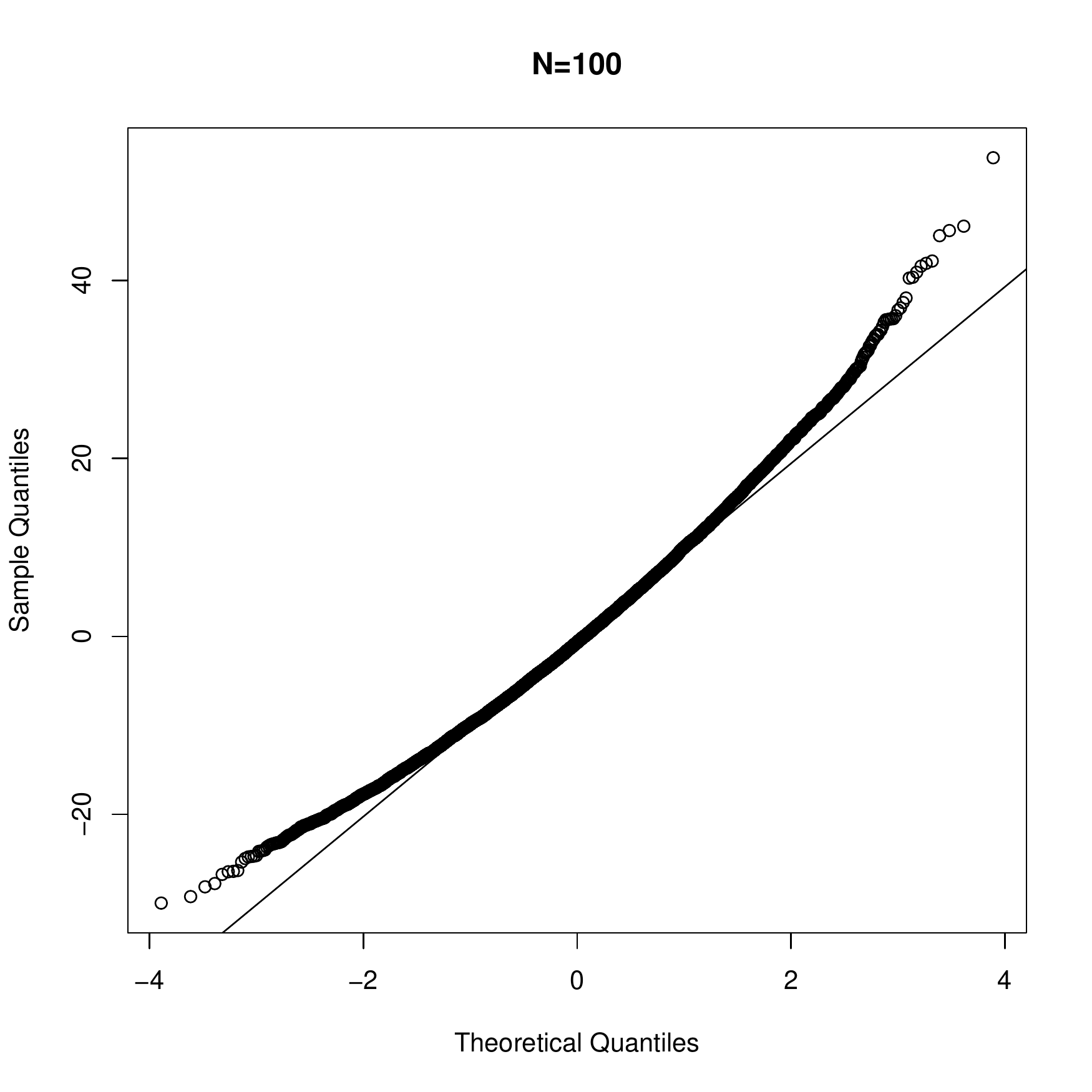}
        \includegraphics[width=.3\linewidth]{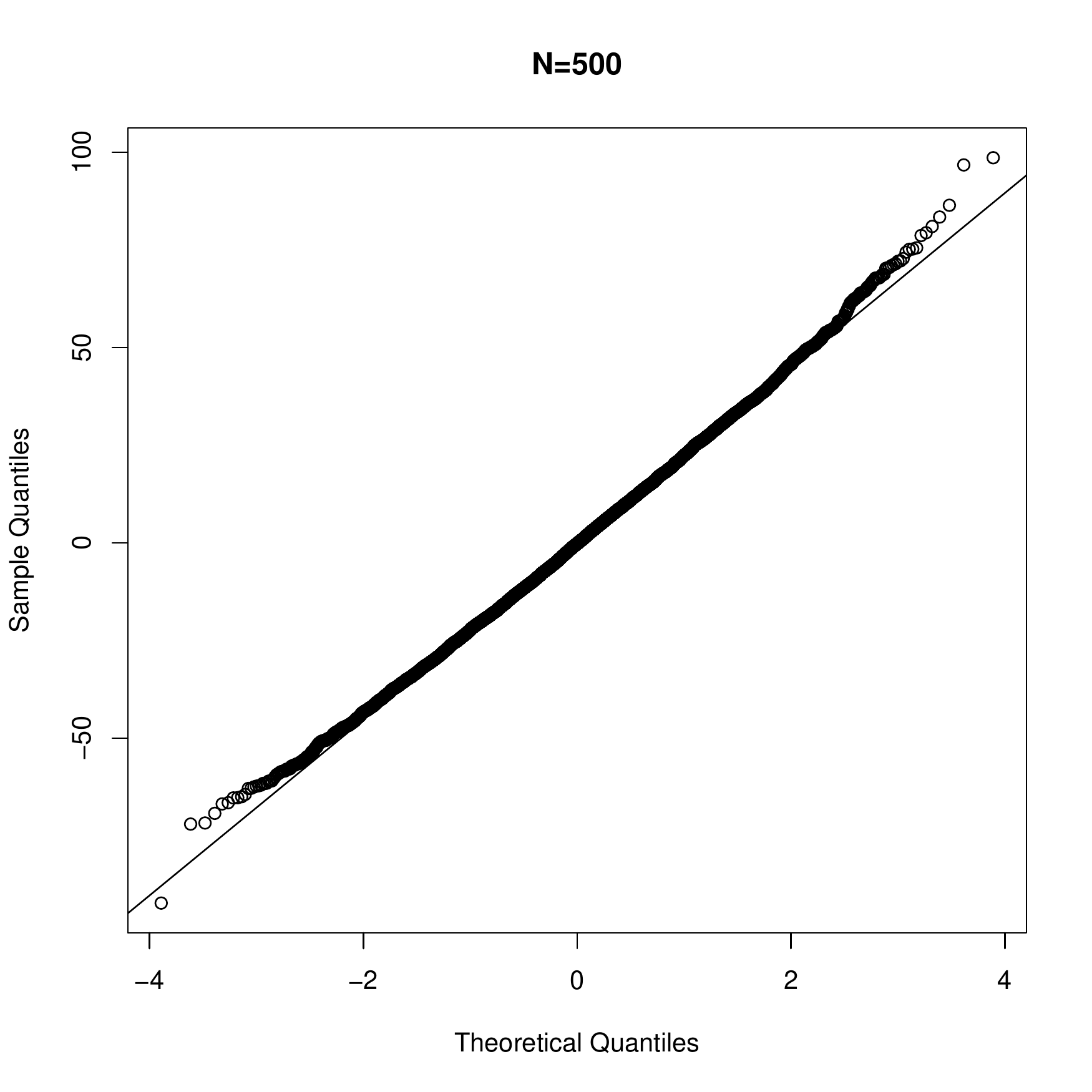}
    \end{array}$
    \end{center}

    \begin{center}$
    \begin{array}{lll}
        \includegraphics[width=.3\linewidth]{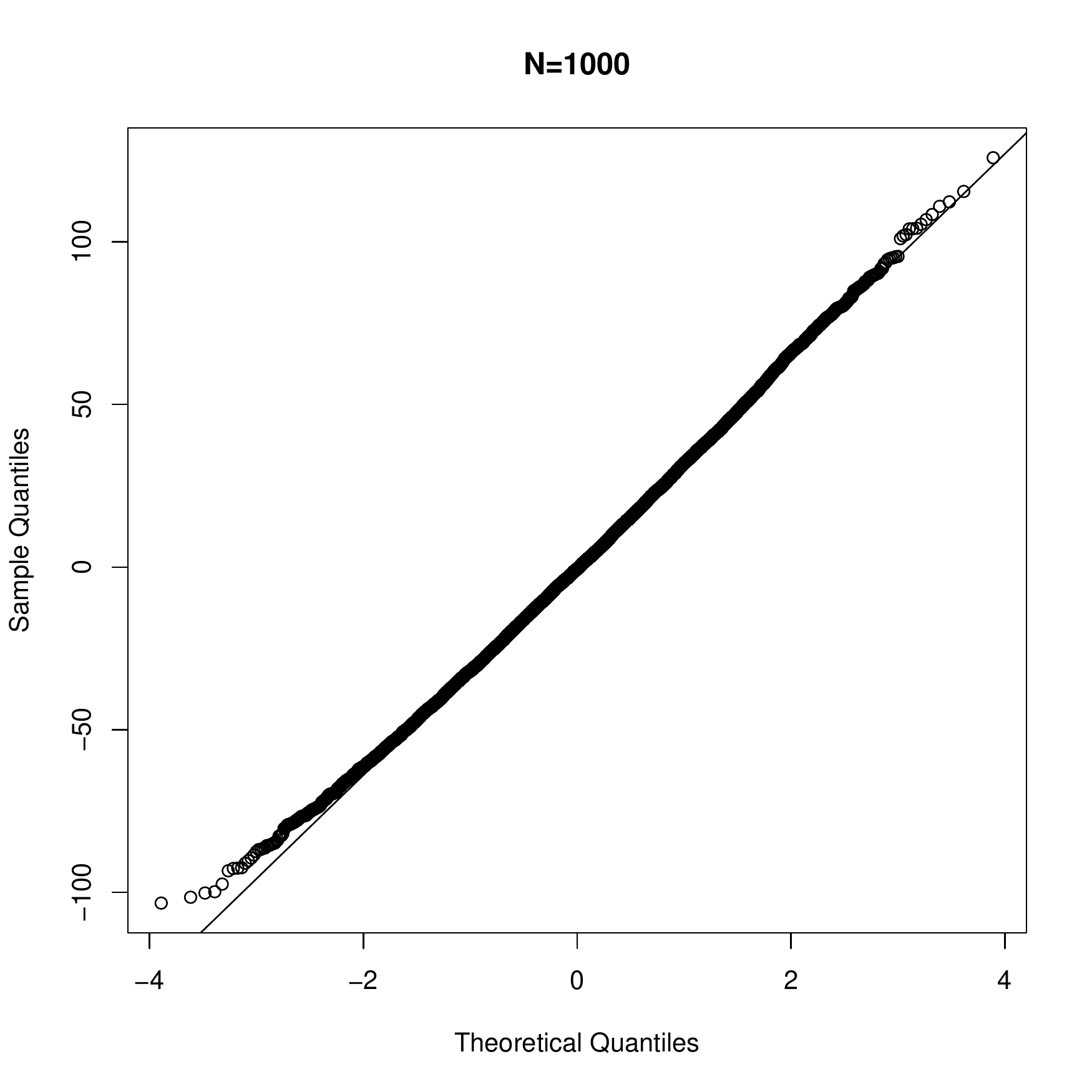}
        \includegraphics[width=.3\linewidth]{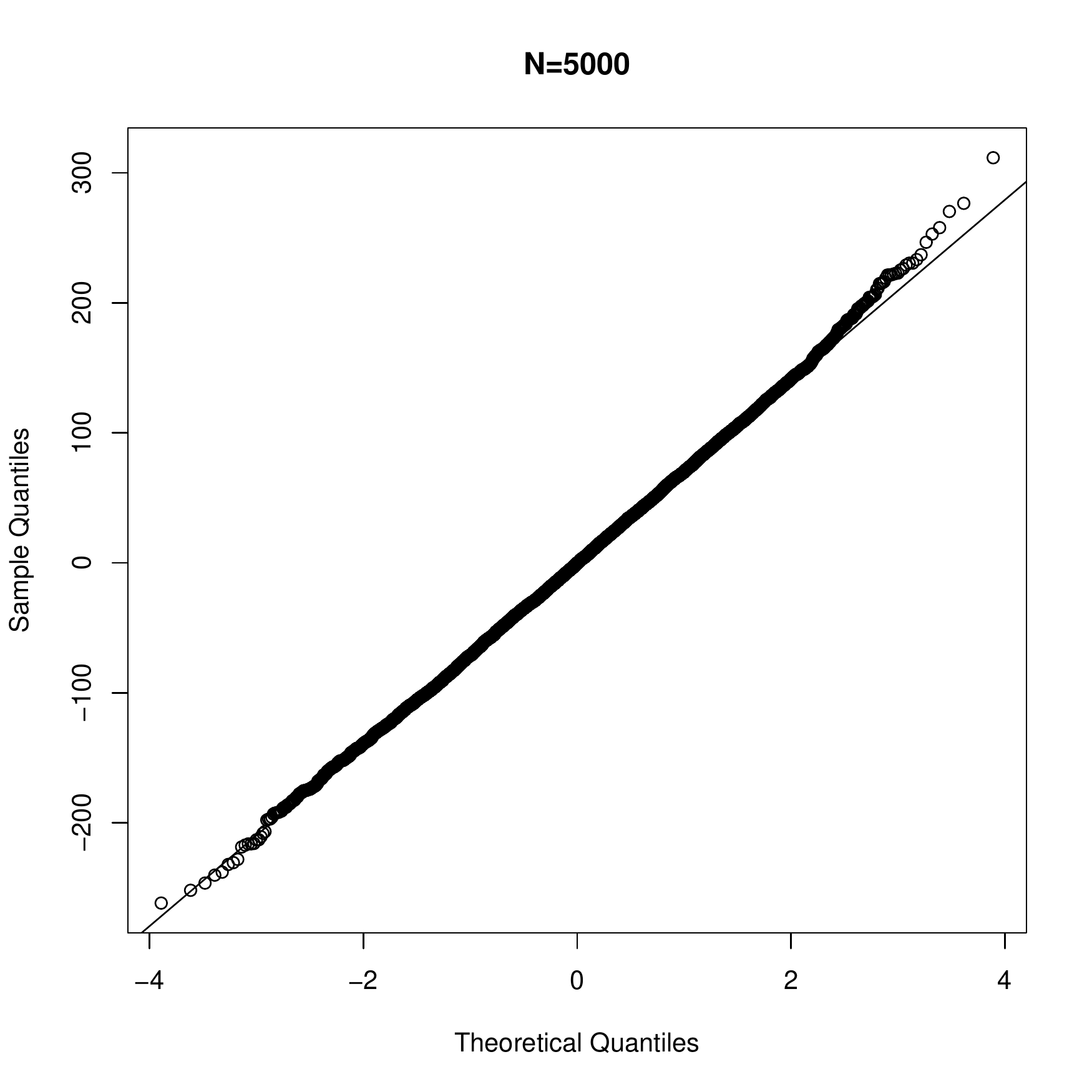}
        \includegraphics[width=.3\linewidth]{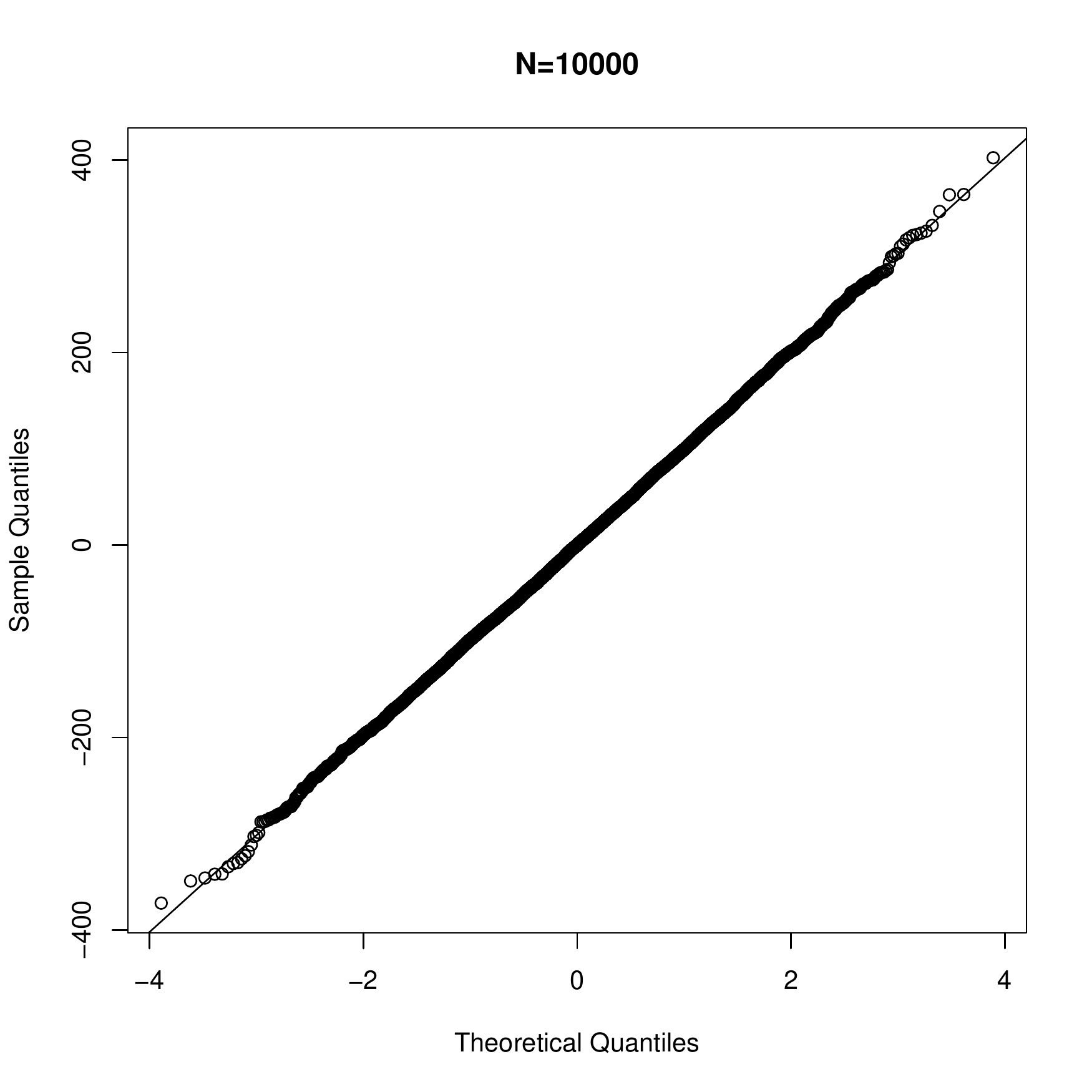}
    \end{array}$
    \end{center}
    \caption{ARMA(2,2) process}
    \end{figure}

\begin{figure}[H]
	\begin{center}$
	\begin{array}{lll}
        \includegraphics[width=.3\linewidth]{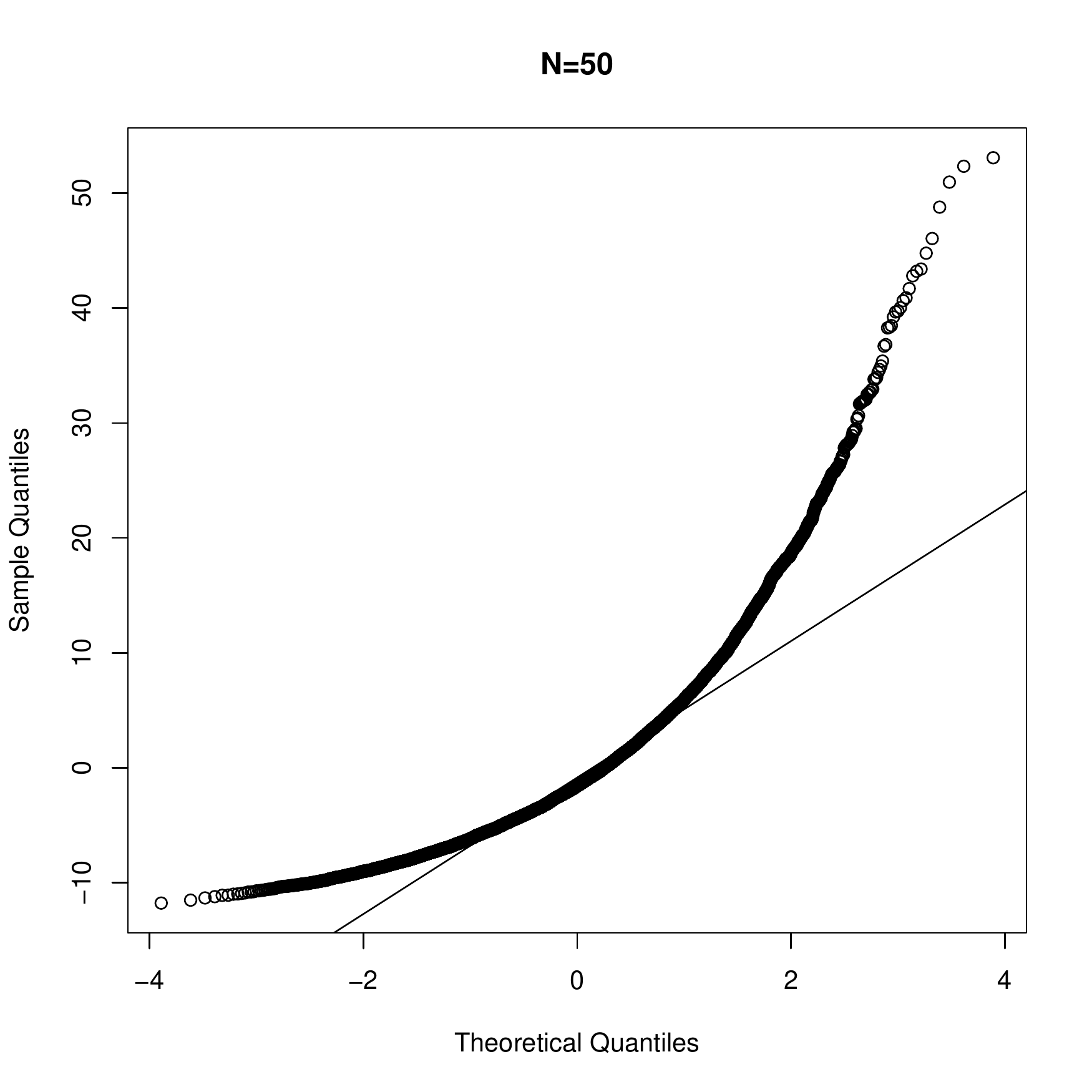}
        \includegraphics[width=.3\linewidth]{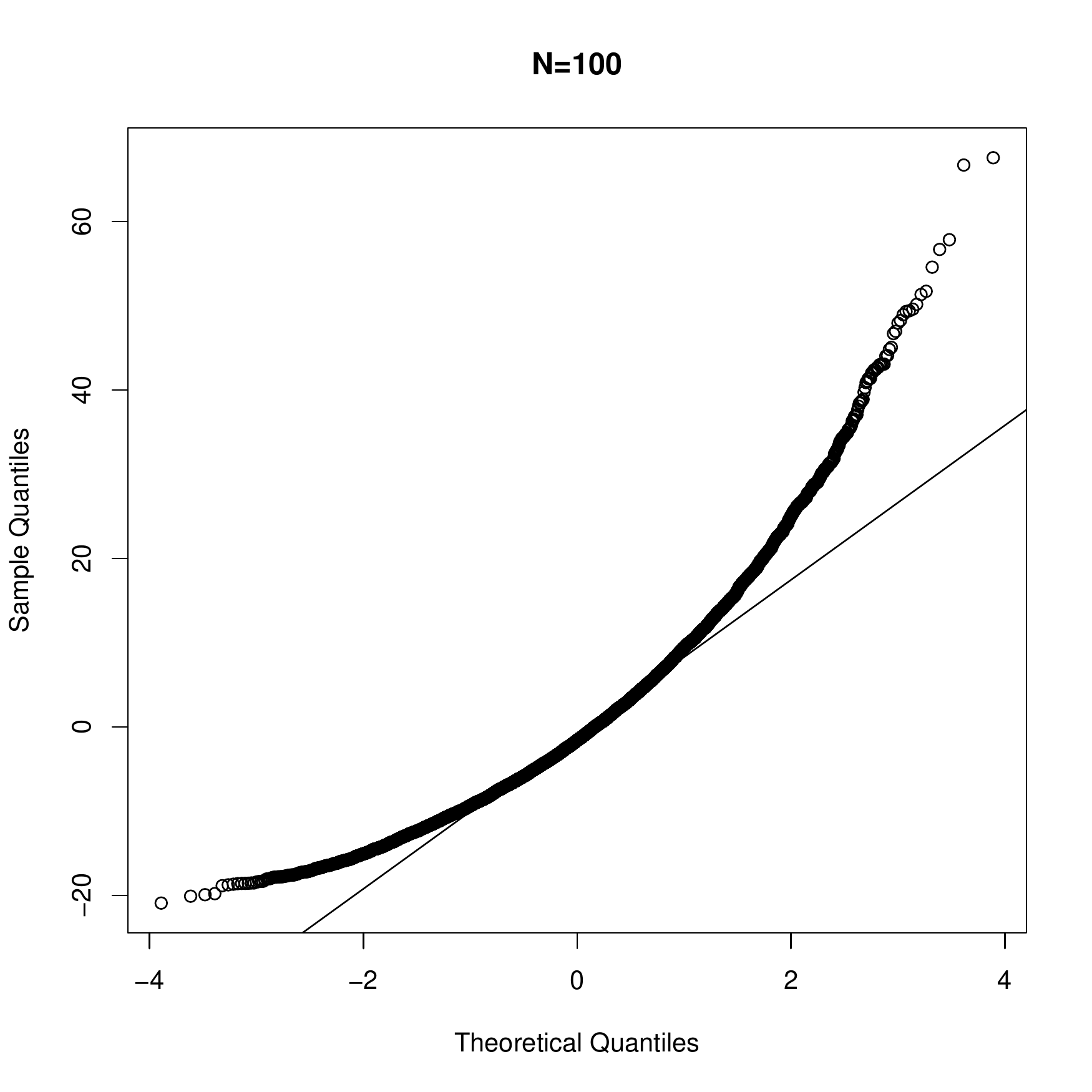}
        \includegraphics[width=.3\linewidth]{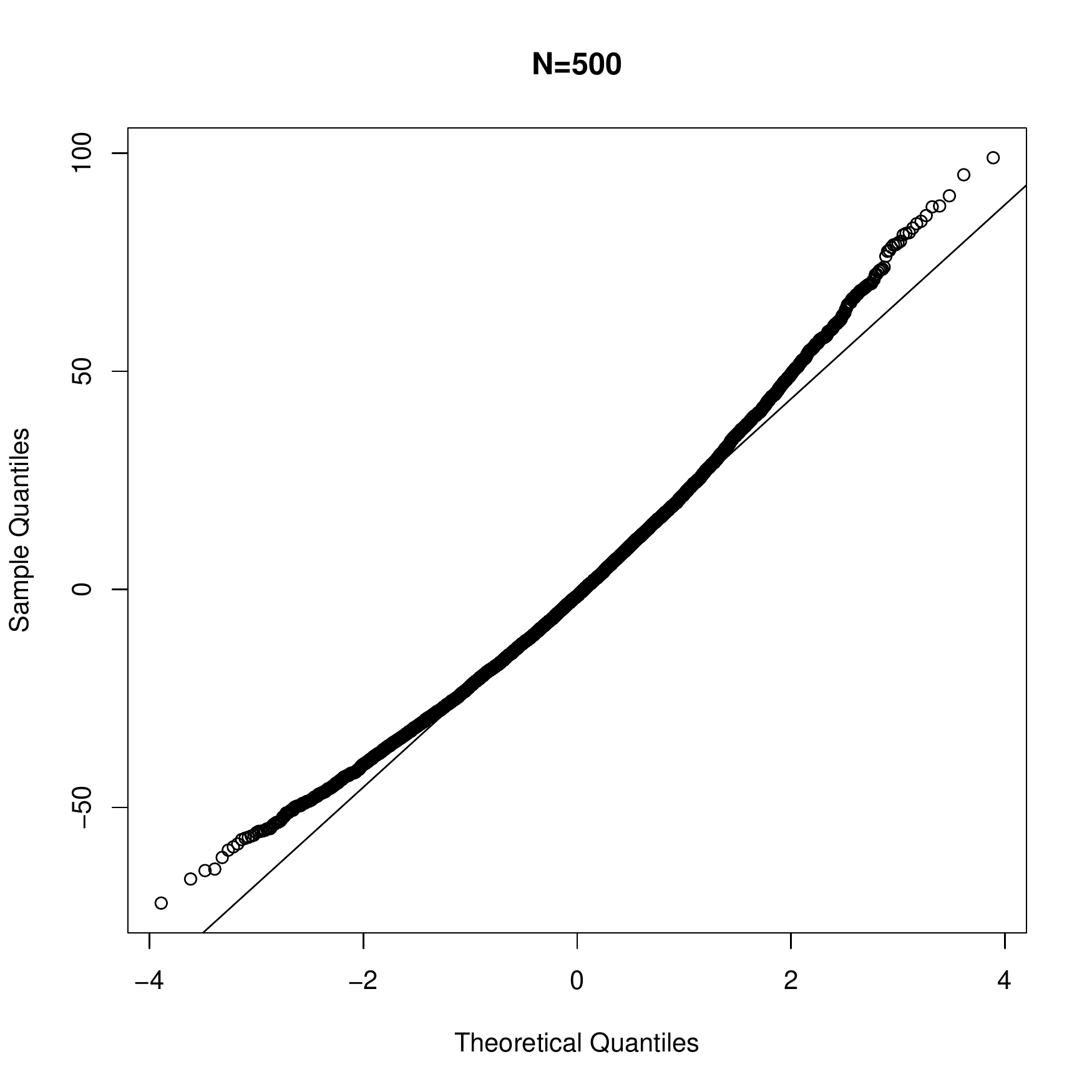}
    \end{array}$
    \end{center}

    \begin{center}$
    \begin{array}{lll}
        \includegraphics[width=.3\linewidth]{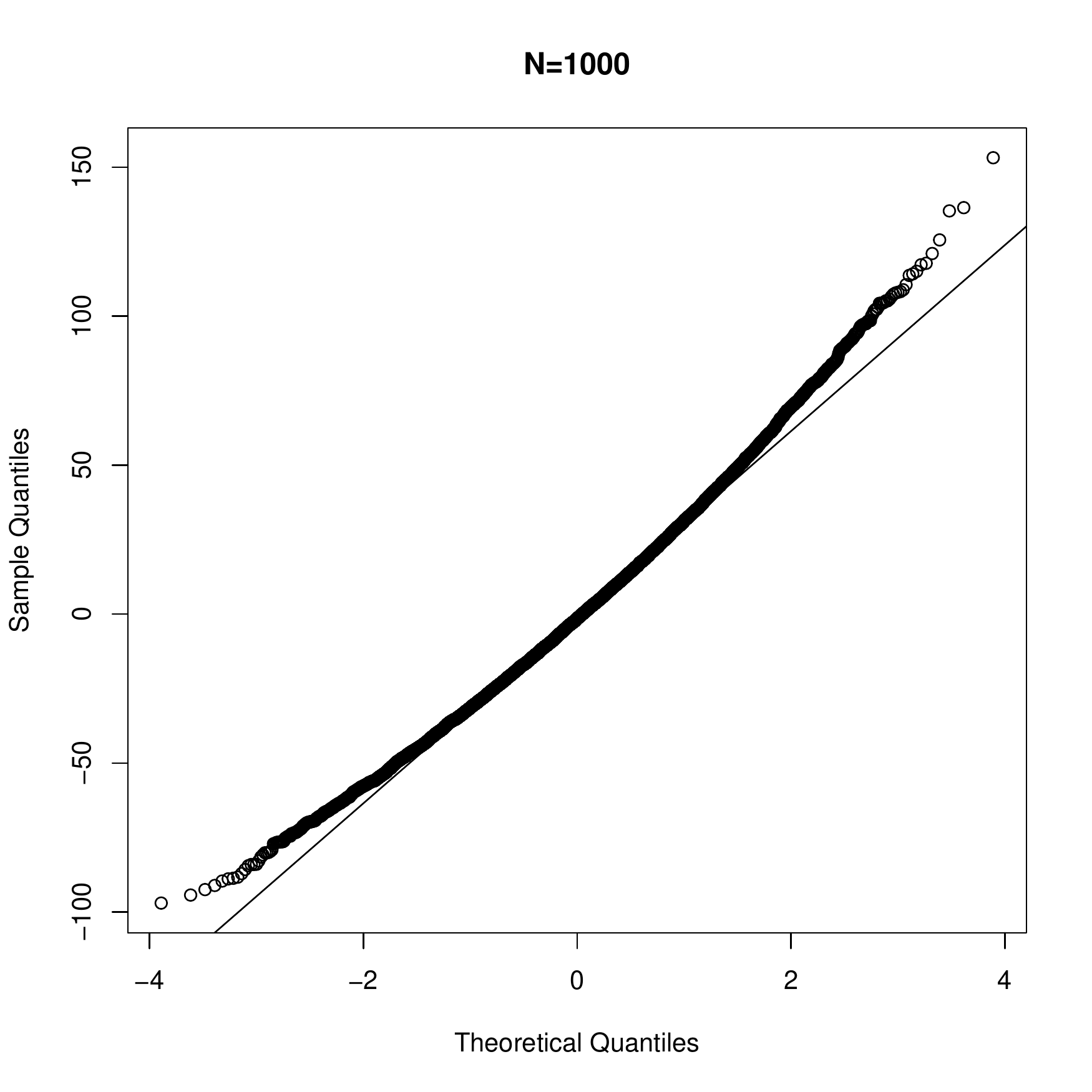}
        \includegraphics[width=.3\linewidth]{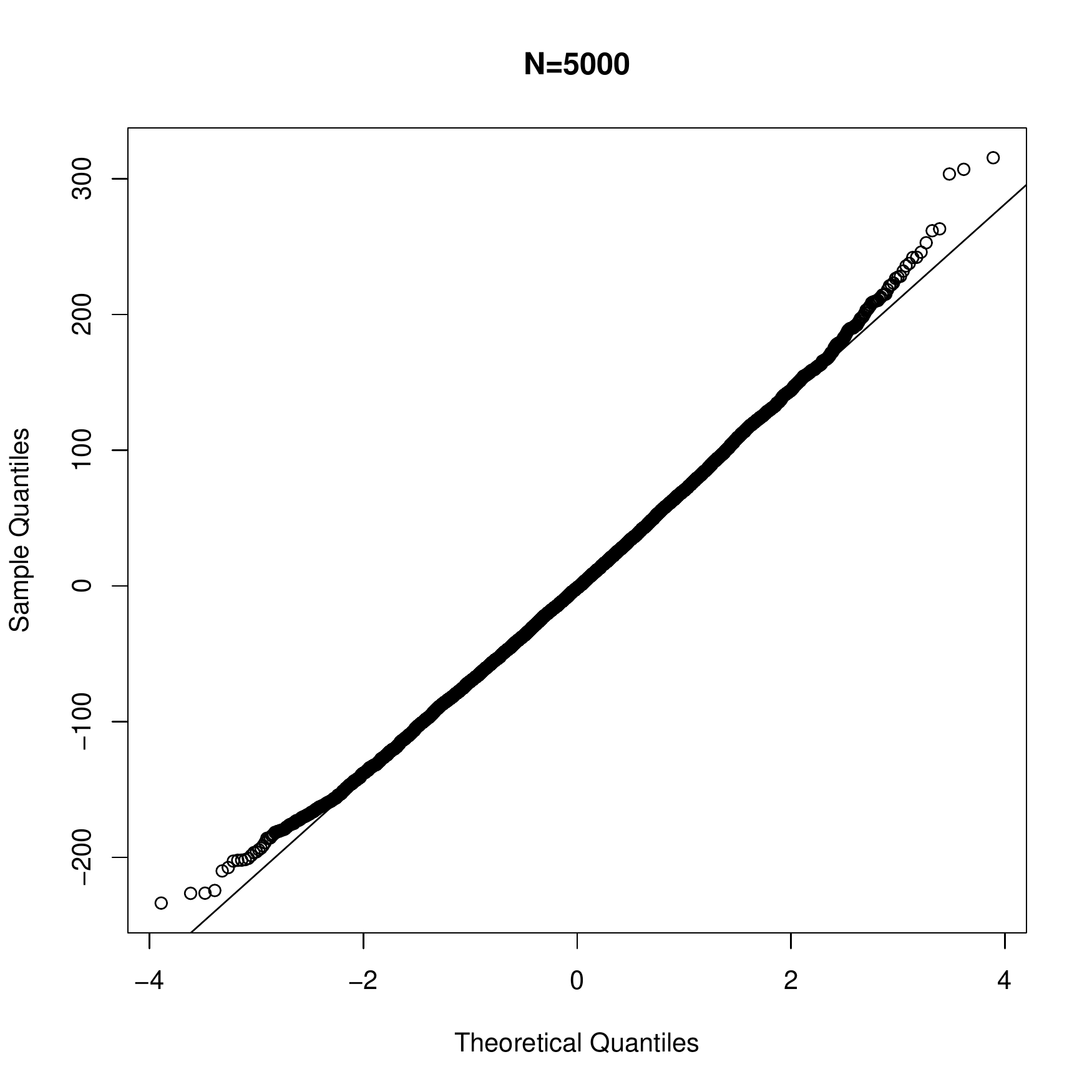}
        \includegraphics[width=.3\linewidth]{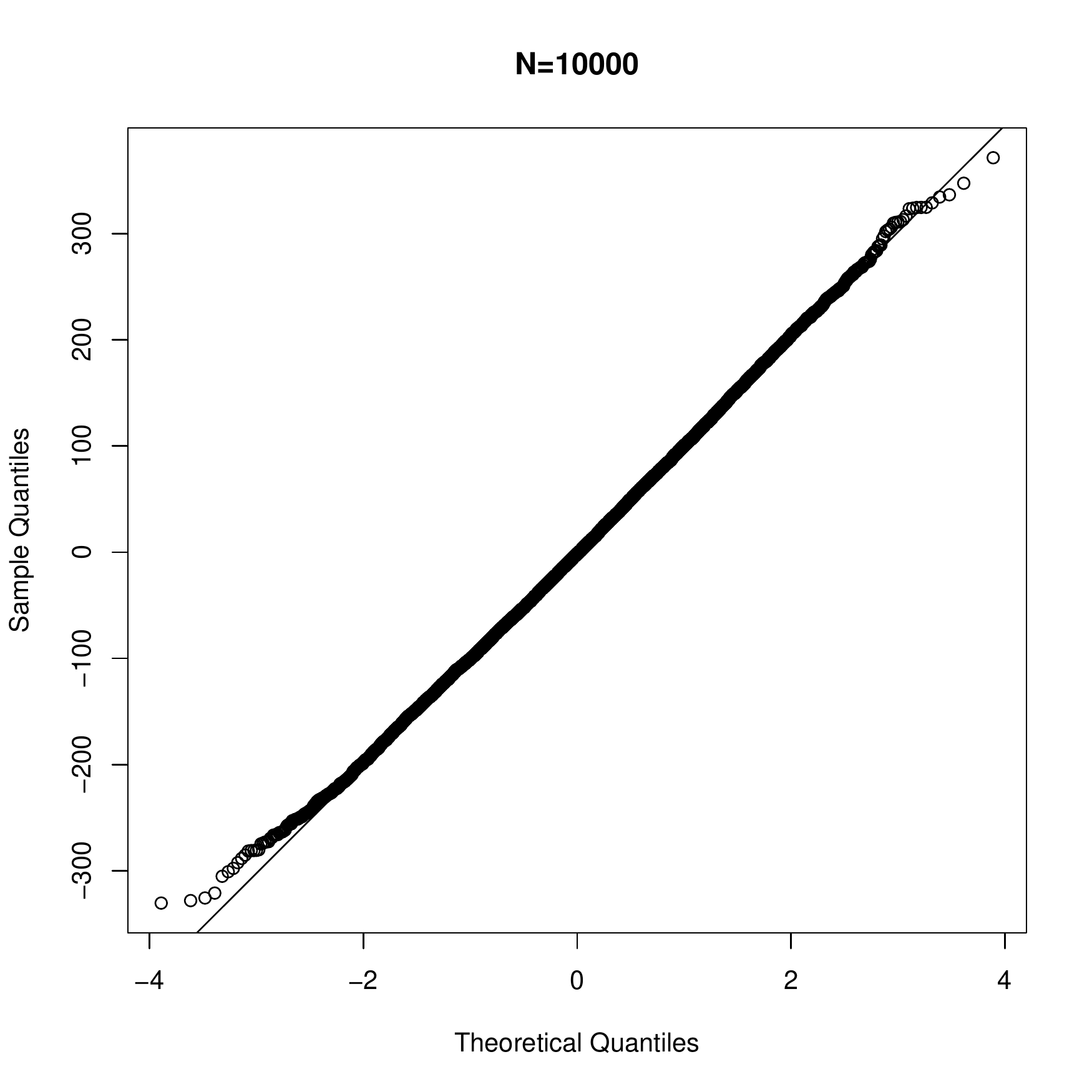}
    \end{array}$
    \end{center}
    \caption{AR(1)-ARCH(1) process}
  \end{figure}

To our knowledge, the only other result regarding the risk consistency of Cross-Validation for time series is \citet{Racine-consistent-cv-for-dependent-data}, who considers autoregressive models.  Our result is more agnostic about the data generating process, but could probably be extended using different tools.

 \bibliographystyle{imsart-nameyear}
\bibliography{locusts}

\begin{thebibliography}{54}

\bibitem[\protect\citeauthoryear{Aistleitner and
  Dick}{2015}]{Aistleitner-Dick-bounded-variation-signed-measures}
\begin{barticle}[author]
\bauthor{\bsnm{Aistleitner},~\bfnm{Christoph}\binits{C.}} \AND
  \bauthor{\bsnm{Dick},~\bfnm{Josef}\binits{J.}}
(\byear{2015}).
\btitle{Functions of bounded variation, signed measures, and a general
  Koksma-Hlawja inequality}.
\bjournal{Acta Arithmetica}
\bvolume{167}
\bpages{143–171}.
\end{barticle}
\endbibitem

\bibitem[\protect\citeauthoryear{Akaike}{1970}]{Akaike-predictor-identification}
\begin{barticle}[author]
\bauthor{\bsnm{Akaike},~\bfnm{Hirotugu}\binits{H.}}
(\byear{1970}).
\btitle{Statistical Predictor Identification}.
\bjournal{Annals of the Institute of Statistical Mathematics}
\bvolume{22}
\bpages{203--217}.
\bnote{Reprinted in \cite[pp.\ 137--151]{Akaike-selected}}.
\bdoi{10.1007/BF02506337}
\end{barticle}
\endbibitem

\bibitem[\protect\citeauthoryear{Akaike}{1998}]{Akaike-selected}
\begin{bbook}[author]
\bauthor{\bsnm{Akaike},~\bfnm{Hirotugu}\binits{H.}}
(\byear{1998}).
\btitle{Selected Papers of {Hirotugu Akaike}}.
\bpublisher{Springer-Verlag}, \baddress{Berlin}.
\bnote{Edited by Emanuel Parzen, Kunio Tanabe and Genshiro Kitagawa}.
\end{bbook}
\endbibitem

\bibitem[\protect\citeauthoryear{Arcones and Yu}{1994}]{Arcones-Yu-CLT}
\begin{barticle}[author]
\bauthor{\bsnm{Arcones},~\bfnm{Miguel~A.}\binits{M.~A.}} \AND
  \bauthor{\bsnm{Yu},~\bfnm{Bin}\binits{B.}}
(\byear{1994}).
\btitle{Central Limit Theorems for Empirical and $U$-processes of Stationary
  Mixing Sequences}.
\bjournal{Journal of Theoretical Probability}
\bvolume{7}
\bpages{47--71}.
\bdoi{10.1007/BF02213360}
\end{barticle}
\endbibitem

\bibitem[\protect\citeauthoryear{Arlot and Celisse}{2010}]{Arlot-Celisse-on-cv}
\begin{barticle}[author]
\bauthor{\bsnm{Arlot},~\bfnm{Sylvain}\binits{S.}} \AND
  \bauthor{\bsnm{Celisse},~\bfnm{Alain}\binits{A.}}
(\byear{2010}).
\btitle{A survey of cross-validation procedures for model selection}.
\bjournal{Statistics Surveys}
\bvolume{4}
\bpages{40--79}.
\end{barticle}
\endbibitem

\bibitem[\protect\citeauthoryear{Beare}{2009}]{Beare-generalized-hoeffding}
\begin{barticle}[author]
\bauthor{\bsnm{Beare},~\bfnm{Brendan}\binits{B.}}
(\byear{2009}).
\btitle{A Generalization of {Hoeffding's} Lemma, and a New Class of Covariance
  Inequalities}.
\bjournal{Statistics and Probability Letters}
\bvolume{79}
\bpages{637--642}.
\bdoi{10.1016/j.spl.2008.10.012}
\end{barticle}
\endbibitem

\bibitem[\protect\citeauthoryear{Belyaev and Sj{\"o}stedt-de
  Luna}{2000}]{Belyaev-Sjostedt-de-Luna-weakly-approach}
\begin{barticle}[author]
\bauthor{\bsnm{Belyaev},~\bfnm{Yuri}\binits{Y.}} \AND
  \bauthor{\bparticle{Sj{\"o}stedt-de} \bsnm{Luna},~\bfnm{Sarah}\binits{S.}}
(\byear{2000}).
\btitle{Weakly Approaching Sequences of Random Variables}.
\bjournal{Journal of Applied Probability}
\bvolume{37}.
\bdoi{10.1017/S0021900200016016}
\end{barticle}
\endbibitem

\bibitem[\protect\citeauthoryear{Bradley}{2005}]{Bradley-strong-mixing}
\begin{barticle}[author]
\bauthor{\bsnm{Bradley},~\bfnm{Richard~C.}\binits{R.~C.}}
(\byear{2005}).
\btitle{Basic Properties of Strong Mixing Conditions. {A} Survey and Some Open
  Questions}.
\bjournal{Probability Surveys}
\bvolume{2}
\bpages{107--144}.
\end{barticle}
\endbibitem

\bibitem[\protect\citeauthoryear{Bunke and
  Droge}{1984}]{Bunke-Droge-bootstrap-vs-CV}
\begin{barticle}[author]
\bauthor{\bsnm{Bunke},~\bfnm{Olaf}\binits{O.}} \AND
  \bauthor{\bsnm{Droge},~\bfnm{Bernd}\binits{B.}}
(\byear{1984}).
\btitle{Bootstrap and Cross-Validation Estimates of the Prediction Error for
  Linear Regression Models}.
\bjournal{Annals of Statistics}
\bvolume{12}
\bpages{1400--1424}.
\bdoi{10.1214/aos/1176346800}
\end{barticle}
\endbibitem

\bibitem[\protect\citeauthoryear{Burman, Chow and
  Nolan}{1994}]{Burman-Chow-Nolan-cross-validation-for-dependent}
\begin{barticle}[author]
\bauthor{\bsnm{Burman},~\bfnm{Prabir}\binits{P.}},
  \bauthor{\bsnm{Chow},~\bfnm{Edmond}\binits{E.}} \AND
  \bauthor{\bsnm{Nolan},~\bfnm{Deborah}\binits{D.}}
(\byear{1994}).
\btitle{A Cross-Validatory Method for Dependent Data}.
\bjournal{Biometrika}
\bvolume{81}
\bpages{351--358}.
\bdoi{10.1093/biomet/81.2.351}
\end{barticle}
\endbibitem

\bibitem[\protect\citeauthoryear{Claeskens and
  Hjort}{2008}]{Claeskens-Hjort-model-selection}
\begin{bbook}[author]
\bauthor{\bsnm{Claeskens},~\bfnm{Gerda}\binits{G.}} \AND
  \bauthor{\bsnm{Hjort},~\bfnm{Nils~Lid}\binits{N.~L.}}
(\byear{2008}).
\btitle{Model Selection and Model Averaging}.
\bpublisher{Cambridge University Press}, \baddress{Cambridge, England}.
\end{bbook}
\endbibitem

\bibitem[\protect\citeauthoryear{Davydov}{1968}]{Davydov-inequality}
\begin{barticle}[author]
\bauthor{\bsnm{Davydov},~\bfnm{Yu.~A.}\binits{Y.~A.}}
(\byear{1968}).
\btitle{Convergence of Distributions Generated by Stationary Stochastic
  Processes}.
\bjournal{Theoretical Probability and Its Applications}
\bvolume{13}
\bpages{691--696}.
\bdoi{10.1137/1113086}
\end{barticle}
\endbibitem

\bibitem[\protect\citeauthoryear{Doukhan}{1995}]{Doukhan-on-mixing}
\begin{bbook}[author]
\bauthor{\bsnm{Doukhan},~\bfnm{Paul}\binits{P.}}
(\byear{1995}).
\btitle{Mixing: Properties and Examples}.
\bpublisher{Springer-Verlag}, \baddress{New York}.
\end{bbook}
\endbibitem

\bibitem[\protect\citeauthoryear{Driver}{2004}]{Driver-analysis-tools}
\begin{bmisc}[author]
\bauthor{\bsnm{Driver},~\bfnm{Bruce}\binits{B.}}
(\byear{2004}).
\btitle{Analysis Tools with Examples}.
\bhowpublished{Online textbook}.
\end{bmisc}
\endbibitem

\bibitem[\protect\citeauthoryear{Dudley}{1997}]{Dudley-p-variation}
\begin{binproceedings}[author]
\bauthor{\bsnm{Dudley},~\bfnm{Richard~M.}\binits{R.~M.}}
(\byear{1997}).
\btitle{Empirical Processes and p-Variation}.
In \bbooktitle{Festschrift for {Lucien Le Cam}}
(\beditor{\bfnm{David}\binits{D.}~\bsnm{Pollard}},
  \beditor{\bfnm{Erik}\binits{E.}~\bsnm{Torgersen}} \AND
  \beditor{\bfnm{Grace~L.}\binits{G.~L.}~\bsnm{Yang}}, eds.)
\bpages{219--233}.
\bpublisher{Springer}, \baddress{New York}.
\end{binproceedings}
\endbibitem

\bibitem[\protect\citeauthoryear{Efron}{1983}]{Efron-bootstrap-error-rate}
\begin{barticle}[author]
\bauthor{\bsnm{Efron},~\bfnm{Bradley}\binits{B.}}
(\byear{1983}).
\btitle{Estimating the Error Rate of a Prediction Rule: Improvements on
  Cross-Validation}.
\bjournal{Journal of the American Statistical Association}
\bvolume{78}
\bpages{316--331}.
\bdoi{10.1080/01621459.1983.10477973}
\end{barticle}
\endbibitem

\bibitem[\protect\citeauthoryear{Ekstrom}{2014}]{Ekstrom-CLT-for-strong-mixing}
\begin{barticle}[author]
\bauthor{\bsnm{Ekstrom},~\bfnm{Magnus}\binits{M.}}
(\byear{2014}).
\btitle{A General Central Limit Theorem for Strongly Mixing Sequences}.
\bjournal{Statistics and Probability Letters}
\bvolume{94}
\bpages{236--238}.
\bdoi{10.1016/j.spl.2014.07.024}
\end{barticle}
\endbibitem

\bibitem[\protect\citeauthoryear{Hable}{2012}]{Hable-asymptotic-normality-of-kernel-methods}
\begin{barticle}[author]
\bauthor{\bsnm{Hable},~\bfnm{Robert}\binits{R.}}
(\byear{2012}).
\btitle{Asymptotic Normality of Support Vector Machine Variants and Other
  Regularized Kernel Methods}.
\bjournal{Journal of Multivariate Analysis}
\bvolume{106}
\bpages{92--117}.
\bdoi{10.1016/j.jmva.2011.11.004}
\end{barticle}
\endbibitem

\bibitem[\protect\citeauthoryear{Hall, Horowitz and
  Jing}{1995}]{Hall-Horowitz-Jing-bootstrap-block-size-rules}
\begin{barticle}[author]
\bauthor{\bsnm{Hall},~\bfnm{Peter}\binits{P.}},
  \bauthor{\bsnm{Horowitz},~\bfnm{Joel}\binits{J.}} \AND
  \bauthor{\bsnm{Jing},~\bfnm{Bing-Yi}\binits{B.-Y.}}
(\byear{1995}).
\btitle{On Blocking Rules for the Bootstrap with Dependent Data}.
\bjournal{Biometrika}
\bvolume{82}
\bpages{561--574}.
\bdoi{10.1093/biomet/82.3.561}
\end{barticle}
\endbibitem

\bibitem[\protect\citeauthoryear{Hannan}{1980}]{Hannan-order-estimation}
\begin{barticle}[author]
\bauthor{\bsnm{Hannan},~\bfnm{E.~J}\binits{E.~J.}}
(\byear{1980}).
\btitle{The Estimation of the Order of an {ARMA} Process}.
\bjournal{Annals of Statistics}
\bvolume{8}
\bpages{1071--1081}.
\bdoi{10.1214/aos/117634514}
\end{barticle}
\endbibitem

\bibitem[\protect\citeauthoryear{H{\"a}rdle and
  Vieu}{1992}]{Hardle-View-kernel-reg-for-time-series}
\begin{barticle}[author]
\bauthor{\bsnm{H{\"a}rdle},~\bfnm{Wolfgang}\binits{W.}} \AND
  \bauthor{\bsnm{Vieu},~\bfnm{Phillipe}\binits{P.}}
(\byear{1992}).
\btitle{Kernel Regression Smoothing of Time Series}.
\bjournal{Journal of Time Series Analysis}
\bvolume{13}
\bpages{209-–232}.
\bdoi{10.1111/j.1467-9892.1992.tb00103.x}
\end{barticle}
\endbibitem

\bibitem[\protect\citeauthoryear{Hiriat-Urrut and
  Lemar{\'e}chal}{2012}]{Hiriat-urrut-lemarechel-fundamentals-convex-analysis}
\begin{bbook}[author]
\bauthor{\bsnm{Hiriat-Urrut},~\bfnm{Jean-Baptiste}\binits{J.-B.}} \AND
  \bauthor{\bsnm{Lemar{\'e}chal},~\bfnm{Claude}\binits{C.}}
(\byear{2012}).
\btitle{Fundamentals of Convex Analysis}.
\bpublisher{Springer}, \baddress{New York}.
\end{bbook}
\endbibitem

\bibitem[\protect\citeauthoryear{Kallenberg}{2002}]{Kallenberg-mod-prob}
\begin{bbook}[author]
\bauthor{\bsnm{Kallenberg},~\bfnm{Olav}\binits{O.}}
(\byear{2002}).
\btitle{Foundations of Modern Probability},
\bedition{Second} ed.
\bpublisher{Springer-Verlag}, \baddress{New York}.
\end{bbook}
\endbibitem

\bibitem[\protect\citeauthoryear{Karandikar and
  Vidyasagar}{2002}]{Karandikar-Vidyasagar-rates-of-UCEM}
\begin{barticle}[author]
\bauthor{\bsnm{Karandikar},~\bfnm{Rajeeva~L.}\binits{R.~L.}} \AND
  \bauthor{\bsnm{Vidyasagar},~\bfnm{Mathukumalli}\binits{M.}}
(\byear{2002}).
\btitle{Rates of uniform convergence of empirical means with mixing processes}.
\bjournal{Statistics and Probability Letters}
\bvolume{58}
\bpages{297--307}.
\bdoi{10.1016/S0167-7152(02)00124-4}
\end{barticle}
\endbibitem

\bibitem[\protect\citeauthoryear{Kosorok}{2008}]{Kosorok-intro}
\begin{bbook}[author]
\bauthor{\bsnm{Kosorok},~\bfnm{Michael~R.}\binits{M.~R.}}
(\byear{2008}).
\btitle{Introduction to Empirical Processes and Semiparametric Inference}.
\bpublisher{Springer}, \baddress{New York}.
\end{bbook}
\endbibitem

\bibitem[\protect\citeauthoryear{K{\"u}nsch}{1989}]{Kunsch-bootstrap-for-stationary-obs}
\begin{barticle}[author]
\bauthor{\bsnm{K{\"u}nsch},~\bfnm{Hans~R.}\binits{H.~R.}}
(\byear{1989}).
\btitle{The Jackknife and the Bootstrap for General Stationary Observations}.
\bjournal{Annals of Statistics}
\bvolume{17}
\bpages{1217--1241}.
\end{barticle}
\endbibitem

\bibitem[\protect\citeauthoryear{Kuznetsov and
  Mohri}{2017}]{Kuznetsov-Mohri-non-stationary-mixing}
\begin{barticle}[author]
\bauthor{\bsnm{Kuznetsov},~\bfnm{Vitaly}\binits{V.}} \AND
  \bauthor{\bsnm{Mohri},~\bfnm{Mehryar}\binits{M.}}
(\byear{2017}).
\btitle{Generalization Bounds for Non-Stationary Mixing Processes}.
\bjournal{Machine Learning}
\bvolume{106}
\bpages{93--117}.
\bdoi{10.1007/s10994-016-5588-2}
\end{barticle}
\endbibitem

\bibitem[\protect\citeauthoryear{Lahiri}{2003}]{Lahiri-resampling-for-dependent}
\begin{bbook}[author]
\bauthor{\bsnm{Lahiri},~\bfnm{S.~N.}\binits{S.~N.}}
(\byear{2003}).
\btitle{Resampling Methods for Dependent Data}.
\bpublisher{Springer-Verlag}, \baddress{New York}.
\end{bbook}
\endbibitem

\bibitem[\protect\citeauthoryear{Lahiri, Furukawa and
  Lee}{2007}]{Lahiri-Furukawa-Lee-bootstrap-block-size-rule}
\begin{barticle}[author]
\bauthor{\bsnm{Lahiri},~\bfnm{S.~N.}\binits{S.~N.}},
  \bauthor{\bsnm{Furukawa},~\bfnm{K.}\binits{K.}} \AND
  \bauthor{\bsnm{Lee},~\bfnm{Y.~D.}\binits{Y.~D.}}
(\byear{2007}).
\btitle{A Nonparametric Plug-In Rule for Selecting Optimal Block Lengths for
  Block Bootstrap Methods}.
\bjournal{Statistical Methodology}
\bvolume{4}
\bpages{292--321}.
\bdoi{10.1016/j.stamet.2006.08.002}
\end{barticle}
\endbibitem

\bibitem[\protect\citeauthoryear{Lange, Rahbek and
  Jensen}{2011}]{Lange-Rahbek-Jensen-on-AR-ARCH}
\begin{barticle}[author]
\bauthor{\bsnm{Lange},~\bfnm{Theis}\binits{T.}},
  \bauthor{\bsnm{Rahbek},~\bfnm{Anders}\binits{A.}} \AND
  \bauthor{\bsnm{Jensen},~\bfnm{S{\o}ren~Tolver}\binits{S.~T.}}
(\byear{2011}).
\btitle{Estimation and Asymptotic Inference in the {AR-ARCH} Model}.
\bjournal{Econometric Reviews}
\bvolume{30}
\bpages{129--153}.
\bdoi{10.1080/07474938.2011.534031}
\end{barticle}
\endbibitem

\bibitem[\protect\citeauthoryear{Lee}{2005}]{Lee-Markov-switching}
\begin{barticle}[author]
\bauthor{\bsnm{Lee},~\bfnm{Oesook}\binits{O.}}
(\byear{2005}).
\btitle{Probabilistic Properties of a Nonlinear {ARMA} Process with Markov
  Switching}.
\bjournal{Communications in Statistics: Theory and Methods}
\bvolume{34}
\bpages{193--204}.
\bdoi{10.1081/STA-200045822}
\end{barticle}
\endbibitem

\bibitem[\protect\citeauthoryear{McDonald}{2012}]{McDonald-thesis}
\begin{bphdthesis}[author]
\bauthor{\bsnm{McDonald},~\bfnm{Daniel~J.}\binits{D.~J.}}
(\byear{2012}).
\btitle{Generalization Error Bounds for Time Series}
\btype{PhD thesis},
\bpublisher{Carnegie Mellon University}.
\end{bphdthesis}
\endbibitem

\bibitem[\protect\citeauthoryear{McDonald, Shalizi and
  Schervish}{2015}]{estimating-beta-mixing-bernoulli}
\begin{barticle}[author]
\bauthor{\bsnm{McDonald},~\bfnm{Daniel~J.}\binits{D.~J.}},
  \bauthor{\bsnm{Shalizi},~\bfnm{Cosma~Rohilla}\binits{C.~R.}} \AND
  \bauthor{\bsnm{Schervish},~\bfnm{Mark}\binits{M.}}
(\byear{2015}).
\btitle{Estimating Beta-Mixing Coefficients via Histograms}.
\bjournal{Electronic Journal of Statistics}
\bvolume{9}
\bpages{2855--2883}.
\bdoi{10.1214/15-EJS1094}
\end{barticle}
\endbibitem

\bibitem[\protect\citeauthoryear{McDonald, Shalizi and
  Schervish}{2017}]{risk-bounds-for-state-space-models}
\begin{barticle}[author]
\bauthor{\bsnm{McDonald},~\bfnm{Daniel~J.}\binits{D.~J.}},
  \bauthor{\bsnm{Shalizi},~\bfnm{Cosma~Rohilla}\binits{C.~R.}} \AND
  \bauthor{\bsnm{Schervish},~\bfnm{Mark}\binits{M.}}
(\byear{2017}).
\btitle{Nonparametric Risk Bounds for Time-Series Forecasting}.
\bjournal{Journal of Machine Learning Research}
\bvolume{18}
\bpages{1--40}.
\end{barticle}
\endbibitem

\bibitem[\protect\citeauthoryear{Meir}{2000}]{Meir-nonparametric-time-series}
\begin{barticle}[author]
\bauthor{\bsnm{Meir},~\bfnm{Ron}\binits{R.}}
(\byear{2000}).
\btitle{Nonparametric Time Series Prediction Through Adaptive Model Selection}.
\bjournal{Machine Learning}
\bvolume{39}
\bpages{5--34}.
\bdoi{10.1023/A:1007602715810}
\end{barticle}
\endbibitem

\bibitem[\protect\citeauthoryear{Mohri and
  Rostamizadeh}{2009}]{Mohri-Rostamizadeh-rademacher-for-non-iid}
\begin{binproceedings}[author]
\bauthor{\bsnm{Mohri},~\bfnm{Mehryar}\binits{M.}} \AND
  \bauthor{\bsnm{Rostamizadeh},~\bfnm{Afshin}\binits{A.}}
(\byear{2009}).
\btitle{Rademacher Complexity Bounds for Non-{I.I.D.} Processes}.
In \bbooktitle{Advances in Neural Information Processing Systems 21 [NIPS
  2008]}
(\beditor{\bfnm{Daphne}\binits{D.}~\bsnm{Koller}},
  \beditor{\bfnm{D.}\binits{D.}~\bsnm{Schuurmans}},
  \beditor{\bfnm{Y.}\binits{Y.}~\bsnm{Bengio}} \AND
  \beditor{\bfnm{L{\'e}on}\binits{L.}~\bsnm{Bottou}}, eds.)
\bpages{1097--1104}.
\end{binproceedings}
\endbibitem

\bibitem[\protect\citeauthoryear{Mohri and
  Rostamizadeh}{2010}]{Mohri-Rostamizdaeh-stability-bounds}
\begin{barticle}[author]
\bauthor{\bsnm{Mohri},~\bfnm{Mehryar}\binits{M.}} \AND
  \bauthor{\bsnm{Rostamizadeh},~\bfnm{Afshin}\binits{A.}}
(\byear{2010}).
\btitle{Stability Bounds for Stationary $\phi$-mixing and $\beta$-mixing
  Processes}.
\bjournal{Journal of Machine Learning Research}
\bvolume{11}.
\end{barticle}
\endbibitem

\bibitem[\protect\citeauthoryear{Mohri, Rostamizadeh and
  Talwalkar}{2012}]{Mohri-Rostamizadeh-Talwalkar}
\begin{bbook}[author]
\bauthor{\bsnm{Mohri},~\bfnm{Mehryar}\binits{M.}},
  \bauthor{\bsnm{Rostamizadeh},~\bfnm{Afshin}\binits{A.}} \AND
  \bauthor{\bsnm{Talwalkar},~\bfnm{Ameet}\binits{A.}}
(\byear{2012}).
\btitle{Foundations of Machine Learning}.
\bseries{Adaptive Computation and Machine Learning}.
\bpublisher{MIT Press}, \baddress{Cambridge, Massachusetts}.
\end{bbook}
\endbibitem

\bibitem[\protect\citeauthoryear{Owen}{2005}]{Owen-multidimensional-variation}
\begin{binproceedings}[author]
\bauthor{\bsnm{Owen},~\bfnm{Art~B.}\binits{A.~B.}}
(\byear{2005}).
\btitle{Multidimensional Variation for Quasi-Monte Carlo}.
In \bbooktitle{Contemporary Multivariate Analysis and Design of Experiments: In
  Celebration of Prof.\ Kai-Tai Fang's 65th Birthday}
(\beditor{\bfnm{Jianqin}\binits{J.}~\bsnm{Fan}} \AND
  \beditor{\bfnm{Gang}\binits{G.}~\bsnm{Li}}, eds.)
\bpages{49--74}.
\bpublisher{World Scientific}, \baddress{Singapore}.
\end{binproceedings}
\endbibitem

\bibitem[\protect\citeauthoryear{Pestov}{2010}]{Pestov-predictive-PAC}
\begin{binproceedings}[author]
\bauthor{\bsnm{Pestov},~\bfnm{Vladimir}\binits{V.}}
(\byear{2010}).
\btitle{Predictive {PAC} Learnability: A Paradigm for Learning from
  Exchangeable Input Data}.
In \bbooktitle{Proceedings of the 2010 {IEEE} International Conference on
  Granular Computing ({GrC} 2010)}
\bpages{387--391}.
\bpublisher{{IEEE} Computer Society}, \baddress{Los Alamitos, California}.
\end{binproceedings}
\endbibitem

\bibitem[\protect\citeauthoryear{Politis, Romano and
  Wolf}{1999}]{Politis-Romano-Wolf-subsampling}
\begin{bbook}[author]
\bauthor{\bsnm{Politis},~\bfnm{Dimitris~N.}\binits{D.~N.}},
  \bauthor{\bsnm{Romano},~\bfnm{Joseph~P.}\binits{J.~P.}} \AND
  \bauthor{\bsnm{Wolf},~\bfnm{Michael}\binits{M.}}
(\byear{1999}).
\btitle{Subsampling}.
\bpublisher{Springer}, \baddress{New York}.
\end{bbook}
\endbibitem

\bibitem[\protect\citeauthoryear{Politis and White}{2004}]{PolitisWhite2004}
\begin{barticle}[author]
\bauthor{\bsnm{Politis},~\bfnm{Dimitris~N.}\binits{D.~N.}} \AND
  \bauthor{\bsnm{White},~\bfnm{Halbert}\binits{H.}}
(\byear{2004}).
\btitle{Automatic Block-Length Selection for the Dependent Bootstrap}.
\bjournal{Econometric Reviews}
\bvolume{23}
\bpages{53--70}.
\bnote{See also R code, \url{http://www.math.ucsd.edu/~politis/SOFT/PPW/ppw.R},
  correction,
  \url{http://www.math.ucsd.edu/~politis/PAPER/SBblockCORRECTION.pdf}}.
\bdoi{10.1081/ETC-120028836}
\end{barticle}
\endbibitem

\bibitem[\protect\citeauthoryear{Prause and
  Steland}{2017}]{Prause-Steland-sequential-detection-3d}
\begin{barticle}[author]
\bauthor{\bsnm{Prause},~\bfnm{Annabel}\binits{A.}} \AND
  \bauthor{\bsnm{Steland},~\bfnm{Ansgar}\binits{A.}}
(\byear{2017}).
\btitle{Sequential detection of three-dimensional signals under dependent
  noise}.
\bjournal{Sequential Analysis}
\bvolume{36}
\bpages{151-178}.
\bdoi{10.1080/07474946.2017.1319674}
\end{barticle}
\endbibitem

\bibitem[\protect\citeauthoryear{Racine}{1997}]{Racine-feasible-cv}
\begin{barticle}[author]
\bauthor{\bsnm{Racine},~\bfnm{Jeff}\binits{J.}}
(\byear{1997}).
\btitle{Feasible Cross-Validatory Model Selection for General Stationary
  Processes}.
\bjournal{Journal of Applied Econometrics}
\bvolume{12}
\bpages{169--179}.
\end{barticle}
\endbibitem

\bibitem[\protect\citeauthoryear{Racine}{2000}]{Racine-consistent-cv-for-dependent-data}
\begin{barticle}[author]
\bauthor{\bsnm{Racine},~\bfnm{Jeff}\binits{J.}}
(\byear{2000}).
\btitle{Consistent cross-validatory model-selection for dependent data:
  hv-block cross-validation}.
\bjournal{Journal of Econometrics}
\bvolume{99}
\bpages{39--61}.
\bdoi{10.1016/S0304-4076(00)00030-0}
\end{barticle}
\endbibitem

\bibitem[\protect\citeauthoryear{Radulovi{\'c}}{1996}]{Radulovic-bootstrap-empirical-process-stationary}
\begin{barticle}[author]
\bauthor{\bsnm{Radulovi{\'c}},~\bfnm{Dragan}\binits{D.}}
(\byear{1996}).
\btitle{The Bootstrap for Empirical Processes Based on Stationary
  Observations}.
\bjournal{Stochastic Processes and their Applications}
\bvolume{65}
\bpages{259--279}.
\bdoi{10.1016/S0304-4149(96)00102-0}
\end{barticle}
\endbibitem

\bibitem[\protect\citeauthoryear{Rio}{2017}]{Rio-asymptotic-theory-weakly-dependent}
\begin{bbook}[author]
\bauthor{\bsnm{Rio},~\bfnm{Emmanuel}\binits{E.}}
(\byear{2017}).
\btitle{Asymptotic Theory of Weakly Dependent Random Processes}.
\bpublisher{Springer}, \baddress{New York}.
\end{bbook}
\endbibitem

\bibitem[\protect\citeauthoryear{Schervish and
  Seidenfeld}{1990}]{Schervish-Seidenfeld-approach-to-consensus}
\begin{barticle}[author]
\bauthor{\bsnm{Schervish},~\bfnm{Mark~J.}\binits{M.~J.}} \AND
  \bauthor{\bsnm{Seidenfeld},~\bfnm{Teddy}\binits{T.}}
(\byear{1990}).
\btitle{An Approach to Consensus and Certainty with Increasing Evidence}.
\bjournal{Journal of Statistical Planning and Inference}
\bvolume{25}
\bpages{401--414}.
\bdoi{10.1016/0378-3758(90)90084-8}
\end{barticle}
\endbibitem

\bibitem[\protect\citeauthoryear{Shalizi and
  Kontorovich}{2013}]{CRS-Leo-predictive-mixtures}
\begin{binproceedings}[author]
\bauthor{\bsnm{Shalizi},~\bfnm{Cosma~Rohilla}\binits{C.~R.}} \AND
  \bauthor{\bsnm{Kontorovich},~\bfnm{Aryeh~(Leo)}\binits{A.~L.}}
(\byear{2013}).
\btitle{Predictive {PAC} Learning and Process Decompositions}.
In \bbooktitle{Advances in Neural Information Processing Systems 26 [NIPS
  2013]}
(\beditor{\bfnm{C.~J.~C.}\binits{C.~J.~C.}~\bsnm{Burges}},
  \beditor{\bfnm{L{\'e}on}\binits{L.}~\bsnm{Bottou}},
  \beditor{\bfnm{Max}\binits{M.}~\bsnm{Welling}},
  \beditor{\bfnm{Zoubin}\binits{Z.}~\bsnm{Ghahramani}} \AND
  \beditor{\bfnm{Kilian~Q.}\binits{K.~Q.}~\bsnm{Weinberger}}, eds.)
\bpages{1619--1627}.
\bpublisher{Curran Associates}.
\end{binproceedings}
\endbibitem

\bibitem[\protect\citeauthoryear{Shao}{1996}]{Shao-bootstrap-model-selection}
\begin{barticle}[author]
\bauthor{\bsnm{Shao},~\bfnm{Jun}\binits{J.}}
(\byear{1996}).
\btitle{Bootstrap Model Selection}.
\bjournal{Journal of the American Statistical Association}
\bvolume{91}
\bpages{655--665}.
\bdoi{10.1080/01621459.1996.10476934}
\end{barticle}
\endbibitem

\bibitem[\protect\citeauthoryear{Shibata}{1980}]{Shibata-order-selection}
\begin{barticle}[author]
\bauthor{\bsnm{Shibata},~\bfnm{Ritei}\binits{R.}}
(\byear{1980}).
\btitle{Asymptotically Efficient Selection of the Order of the Model for
  Estimating Parameters of a Linear Process}.
\bjournal{Annals of Statistics}
\bvolume{8}
\bpages{147--164}.
\bdoi{10.1214/aos/1176344897}
\end{barticle}
\endbibitem

\bibitem[\protect\citeauthoryear{van~der Vaart and
  Wellner}{1996}]{van-der-Vaart-Wellner-weak-conv}
\begin{bbook}[author]
\bauthor{\bparticle{van~der} \bsnm{Vaart},~\bfnm{Aad~W.}\binits{A.~W.}} \AND
  \bauthor{\bsnm{Wellner},~\bfnm{Jon~A.}\binits{J.~A.}}
(\byear{1996}).
\btitle{Weak Convergence and Empirical Processes}.
\bpublisher{Springer}, \baddress{New York}.
\end{bbook}
\endbibitem

\bibitem[\protect\citeauthoryear{Vidyasagar}{2003}]{Vidyasagar-on-learning-and-generalization}
\begin{bbook}[author]
\bauthor{\bsnm{Vidyasagar},~\bfnm{Mathukumalli}\binits{M.}}
(\byear{2003}).
\btitle{Learning and Generalization: With Applications to Neural Networks},
\bedition{Second} ed.
\bpublisher{Springer-Verlag}, \baddress{Berlin}.
\end{bbook}
\endbibitem

\bibitem[\protect\citeauthoryear{Yu}{1994}]{Yu-rates-of-convergence}
\begin{barticle}[author]
\bauthor{\bsnm{Yu},~\bfnm{Bin}\binits{B.}}
(\byear{1994}).
\btitle{Rates of Convergence for Empirical Processes of Stationary Mixing
  Sequences}.
\bjournal{Annals of Probability}
\bvolume{22}
\bpages{94--116}.
\bdoi{10.1214/aop/1176988849}
\end{barticle}
\endbibitem

\end{thebibliography}

\end{document}